\newtheorem{theo}{Theorem}
\newtheorem{lemma}[theo]{Lemma}
\newtheorem{coro}[theo]{Corollary}
\newtheorem{prop}[theo]{Proposition}
\newtheorem{claim}[theo]{Claim}
\def\Z{\mathbb Z}
\def\Q{\mathbb Q}
\def\N{\mathbb N}
\def\R{\mathbb R}
\def\T{\mathcal T}
\begin{document}
\author{
K\'aroly J. B\"or\"oczky\thanks{Supported by
 the FP7 IEF grant GEOSUMSETS and OTKA 109789} 
 \and 
 Francisco Santos\thanks{Supported by the Spanish Ministry of Science (MICINN) through grant MTM2011-22792}
 \and 
 Oriol Serra \thanks{Supported by the Spanish Ministry of Science (MICINN) under
project MTM2011-28800-C02-01, and  by  the Catalan Research Council  under
grant  2009SGR1387}}

\title{On sumsets and convex hull}

\maketitle

\begin{abstract}
One classical result of Freimann gives the optimal lower bound for the cardinality of $A+A$ if
$A$ is a  $d$-dimensional finite set in $\R^d$.
Matolcsi and Ruzsa have recently generalized this lower bound to $|A+kB|$ if
$B$ is  $d$--dimensional, and $A$ is contained in the convex hull of $B$. We
characterize the equality case of the Matolcsi-Ruzsa bound. The argument is
based partially on understanding   triangulations of polytopes.
\end{abstract}

\section{Introduction}

The topic of this paper is the  cardinality of the sum of finite sets in the real affine space.
For thorough surveys and background, consult
I.Z. Ruzsa \cite{Ruz09}, and
T. Tao, V. Vu \cite{TaV06}.

A set $A$ in $\R^d$ is $d$--dimensional if it is not contained in any affine hyperplane.
One seminal result proved by G. Freiman \cite{Fre73}  is that for any finite $d$-dimensional  set $A$ in $\R^d$,
\begin{equation}
\label{Freiman}
|A+A|\geq (d+1)|A| - \frac{d(d+1)}2.
\end{equation}

This was recently generalized by M. Matolcsi and I.Z. Ruzsa \cite{MaR10} as follows.

\begin{theo}[Matolcsi-Ruzsa]
\label{AkB}
If $B$ is finite $d$-dimensional in $\R^d$ and $A\subset[B]$, then for every $k\in \N$
$$
|A+kB|\geq {d+k \choose k} |A| -  k {d+k \choose k+1}.
$$
\end{theo}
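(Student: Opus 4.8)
\section*{Proof proposal}

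The plan is to first settle the case in which $B$ is the vertex set of a $d$-simplex, where the statement becomes a clean lattice-point count, and then to bootstrap to a general $d$-dimensional $B$ by means of a triangulation of the polytope $[B]$.

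Suppose first $B=T=\{w_0,\dots,w_d\}$ with the $w_i$ affinely independent. After an affine change of coordinates we may assume $w_0=0$ and $w_i=e_i$, so that $\Delta:=[T]=\{x\in\R^d_{\ge0}:\sum_i x_i\le1\}$ and $kT=\Lambda_k:=\{c\in\Z^d_{\ge0}:\sum_i c_i\le k\}$, a set of exactly $\binom{d+k}{k}$ points. I would study the fibres of the addition map $A\times\Lambda_k\to A+kT$: a coincidence $a+c=a'+c'$ forces $a-a'=c'-c\in\Z^d$, so $a$ and $a'$ lie in the same coset of $\Z^d$. The key elementary geometric fact is that the only pairs of points of $\Delta$ whose difference is a nonzero integer vector are pairs of vertices of $\Delta$; consequently every coset of $\Z^d$ meets $A$ either in a single point (no collapse), or in a subset $S$ of the $d+1$ vertices of $\Delta$, all lying in the coset of $0$. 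For such an $S$ the number of lost sums is $|S|\binom{d+k}{k}-\bigl|\bigcup_{w\in S}(w+\Lambda_k)\bigr|$, and a short computation gives $\bigcup_{w\in\mathrm{vert}\,\Delta}(w+\Lambda_k)=\Lambda_{k+1}$, so the loss is at most $(d+1)\binom{d+k}{k}-\binom{d+k+1}{k+1}=k\binom{d+k}{k+1}$, with equality precisely when all $d+1$ vertices of $\Delta$ lie in $A$. This proves the bound for $B=T$ and already produces equality cases.

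For general $B$, fix a triangulation $\mathcal T$ of the point configuration $B$ (vertex set $B$, simplices covering $[B]$), and assign to each $a\in A$ a simplex $\sigma(a)\in\mathcal T$ containing $a$, with a fixed tie-breaking rule so that each point of $B$ is ``owned'' by a single simplex. Then $A+kB\supseteq\bigcup_\sigma\bigl(A_\sigma+k\,\mathrm{vert}\,\sigma\bigr)$, where $A_\sigma=\{a:\sigma(a)=\sigma\}$, and the simplex case applied inside each $\sigma$ yields $\binom{d+k}{k}|A_\sigma|$ sums up to the loss coming from the vertices of $\sigma$ that it owns. The plan is to process the simplices of $\mathcal T$ in the order of a line shelling (equivalently, a pushing order): when a simplex $\sigma$ is appended, the sums $A_\sigma+k\,\mathrm{vert}\,\sigma$ not yet produced should still number at least $\binom{d+k}{k}|A_\sigma|$ minus an amount chargeable once and for all to the vertices of $[B]$, so that the cumulative loss never exceeds the loss of a single $d$-simplex, namely $k\binom{d+k}{k+1}$. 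Making this precise is the main obstacle: one has to control the overlaps $(k+1)\sigma\cap(k+1)\tau=(k+1)F$ along shared faces $F$ of $\mathcal T$ and argue that the shelling order lets every genuinely new simplex access a fresh supply of sums of the required size. This is exactly where the combinatorics of triangulations of polytopes enters.

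An alternative organization of the last step, which I would keep in reserve, is induction on $|B|-(d+1)$: delete a vertex $v$ of $[B]$, apply the inductive bound to $A\cap[B\setminus v]$ together with $B\setminus v$, and treat the points of $A$ in the cap $[B]\setminus[B\setminus v]$ with a linear functional maximized at $v$ and the visibility structure of the facets of $[B\setminus v]$ seen from $v$; each such point again lies in a $d$-simplex with vertices in $B$, and one must show these contribute their full quota of sums disjointly from $A\cap[B\setminus v]+k(B\setminus v)$. Either way, establishing the inequality with control over \emph{when} each estimate is tight — every coset collapse maximal in the simplex step, and the triangulation suitably rigid — is what will feed the subsequent characterization of equality.
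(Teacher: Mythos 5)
Your treatment of the simplex case is correct and coincides with the paper's Claim~\ref{Matolcsi-Ruzsa-1} and Corollary~\ref{coro:Bsimplex}: the only coincidences $a+c=a'+c'$ come from pairs of vertices, and the loss over the full vertex set telescopes to $k\binom{d+k}{k+1}$, with equality exactly when all $d+1$ vertices lie in $A$. The general step, however, is exactly where you stop: you name the obstacle (``control the overlaps $(k+1)\sigma\cap(k+1)\tau$ \dots\ making this precise is the main obstacle'') without resolving it, so as written the proposal does not prove the theorem.

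The missing ingredient is a specific assignment rule together with one geometric identity. Take a \emph{shelling} $S_1,\dots,S_m$ of a triangulation of $B$ and assign each $a\in A$ to the first simplex of the shelling containing it: set $T_1=S_1$, $T_i=S_i\setminus(S_1\cup\dots\cup S_{i-1})$, and $A_i=A\cap T_i$. Two facts then dissolve the obstacle. First, the half-open pieces satisfy $T_i+kS_i=(k+1)T_i$ (this is \eqref{TiSi}, a consequence of \eqref{relint}), and the sets $(k+1)T_i$ partition $(k+1)[B]$; hence $A_i+kC_i\subset(k+1)T_i$ and the contributions of distinct simplices are disjoint \emph{automatically} --- there is no overlap along shared faces left to control, because each $A_i$ was stripped of those faces before summing. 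Second, by the shelling property $T_i$ contains at most one vertex of $S_i$ for every $i\ge 2$, so by your own coset analysis there is no collapse at all inside $A_i+kC_i$, which therefore contributes exactly $\binom{d+k}{k}|A_i|$. Thus the entire loss $k\binom{d+k}{k+1}$ is charged to the first simplex alone; it is not an amount ``chargeable once and for all to the vertices of $[B]$'' as you suggest --- the vertices of $[B]$ are irrelevant, and what matters is that every later simplex owns at most one of its own vertices. Your reserve plan (deleting a vertex and using visibility) is essentially the placing-triangulation instance of this same shelling argument and would need the identical two facts to close.
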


In particular, taking $A=B$ they get the following, of which
(\ref{Freiman}) is the case $k=2$:

\begin{coro}[Freiman-Matolcsi-Ruzsa]
\label{AkA}
If $A$ is finite $d$-dimensional in $\R^d$,  then for every $k\in \N$
$$
|kA|\geq {d+k-1 \choose k-1} |A| -  (k-1) {d+k-1 \choose k}.
$$
\end{coro}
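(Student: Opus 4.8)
The plan is to obtain Corollary~\ref{AkA} as a direct specialization of Theorem~\ref{AkB}, with no extra work beyond a substitution. Given a finite $d$-dimensional set $A\subset\R^d$, I would apply the Matolcsi--Ruzsa inequality with $B:=A$ and with the parameter $k$ replaced by $k-1$. The hypothesis that $B$ is $d$-dimensional holds because $A$ is $d$-dimensional by assumption, and the hypothesis $A\subset[B]$ collapses to the tautology $A\subseteq[A]$, so nothing needs to be verified there.

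The only genuine content of the argument is the bookkeeping. With $B=A$ the set $A+(k-1)B$ equals $\{a_0+a_1+\cdots+a_{k-1}:a_i\in A\}$, which is exactly $kA$; and substituting $k\mapsto k-1$ (with $d$ unchanged) into the quantities ${d+k \choose k}$ and $k{d+k \choose k+1}$ produces ${d+k-1 \choose k-1}$ and $(k-1){d+k-1 \choose k}$. Hence Theorem~\ref{AkB} yields precisely
$$
|kA|\ \geq\ {d+k-1 \choose k-1}\,|A|\ -\ (k-1){d+k-1 \choose k},
$$
which is the claimed bound.

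Finally I would dispose of the boundary value $k=1$ (and $k=0$, should one wish to admit it) by hand: there the right-hand side reduces to $|A|$ (an empty sum when $k=0$), so the inequality is trivial and no appeal to the theorem is needed; this also sidesteps any worry about whether $k-1$ lies in the index range allowed for $\N$. The ``main obstacle'' is thus purely notational --- ensuring the shift $k\mapsto k-1$ is applied consistently to the sumset, to the multiplicity factor, and to both binomial coefficients. As a sanity check, $k=2$ recovers Freiman's inequality~(\ref{Freiman}); although Corollary~\ref{AkA} could in principle also be proved from scratch (e.g.\ by induction on $d$), doing so is unnecessary once Theorem~\ref{AkB} is available.
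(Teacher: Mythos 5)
Your proposal is correct and is exactly the paper's route: the authors obtain Corollary~\ref{AkA} from Theorem~\ref{AkB} by setting $B=A$ (so that $A+(k-1)B=kA$) and shifting the parameter $k\mapsto k-1$, which is precisely your substitution. The handling of the boundary values $k=0,1$ is a harmless extra remark; nothing further is needed.
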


In these results, for a set $X\subset\R^d$, we set $1X=X$, $kX=(k-1)X+X$
for $k\geq 2$, and $0X=\{0\}$. The sum $X+\emptyset$ is always the empty set. The convex hull of
the set $X\subset \R^d$ is denoted by $[X]$. Similarly,
$[x_1,\ldots,x_m]$ will denote the convex hull of  points
$x_1,\ldots,x_m\in \R^d$.

One of the motivations of the Matolcsi--Ruzsa inequality is the observation  that to prove
(\ref{Freiman}) for the sumset $A+A$,
 the relevant points  of the second summand are the vertices of $[A]$.

The goal of this paper is to give an explicit characterization of the sets $A$ and $B$ for which the inequality in Theorem~\ref{AkB} is tight, that is, for which $A\subset [B]$ and
\[
|A+kB|={d+k \choose k} |A| -  k {d+k \choose k+1}.
\]
We call such a pair $(A,B)$ a $k$--{\it critical} pair. As in M. Matolcsi and I.Z. Ruzsa \cite{MaR10},  triangulations
of $B$ have crucial role in our paper.
 By a triangulation $\T$ of $B$, we mean a triangulation  of $[B]$ where
the set of vertices of $\T$ is $B$. In addition, $\T$ is called  \emph{stacked} if it has 
$|B|-d$ full-dimensional simplices (which is the minimum possible number of simplices in a triangulation of
$|B|$ points in $\R^d$). 
As first steps in the characterization, we show that for every $k$--critical pair $(A,B)$:
\begin{itemize}
\item $B\subset A$ (Lemma~\ref{BinA}).
\item $(A\cap [B'], B')$ is also $k$--critical, for any subset $B'\subset B$ (Lemma~\ref{coro:subset-equa}).
\item  $B$ is \emph{totally stackable} (Corollary~\ref{stack}), meaning that all of its triangulations are stacked.
\end{itemize}

Total stackability is a very restrictive property that can be expressed in different ways (Lemma~\ref{lemma:stackable}) and totally stackable sets are completely characterized by B. Nill and A. Padrol (see Theorem~\ref{theo:stackablechar}). Section~\ref{sectriangs} includes these results and some preliminary background on triangulations.
The fact that $B$ needs to be totally stackable in order to have equality follows from the following refinement of Theorem~\ref{AkB} that we prove in Section~\ref{seccomb}.

\begin{theo}
\label{theo:h-vector-intro}
Let $\T$ be a shellable triangulation of $B$ with $h$-vector $(h_0,\dots,h_d)$. Let $A$ be such that $B\subset A \subset[B]$. Then,
\[
|A +kB| \ge
{d+k\choose k}|A| - k{d+k\choose k+1}  +
\sum_{j=2}^{\min(d,k+1)} h_j {d+k+1 - j \choose k+1 - j}.
\]
\end{theo}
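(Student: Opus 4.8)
The strategy is to run the Matolcsi–Ruzsa counting argument relative to a shelling order of the triangulation $\T$, and extract the extra terms $h_j\binom{d+k+1-j}{k+1-j}$ from the combinatorial data of the shelling. Recall that in a shellable triangulation with shelling order $\sigma_1,\dots,\sigma_N$ (where $N=|B|-d$ is replaced in general by the total count of full-dimensional simplices, $N=\sum_i h_i$), each simplex $\sigma_i$ after the first contributes a \emph{restriction face} $R_i\subseteq\sigma_i$, the unique minimal new face, and the $h$-vector records $h_j=\#\{i:|R_i|=j\}$. The plan is to build up $[B]$ simplex by simplex in this order and, at each stage $i$, estimate how many new elements of $A+kB$ appear when we pass from $A_{i-1}+kB$ to $A_i+kB$, where $A_i=A\cap[\sigma_1\cup\dots\cup\sigma_i]$. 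For the sum with $kB$ we only ever need the \emph{vertices} of the simplices (this is the observation highlighted in the introduction), so $kB$ can be replaced throughout by sums of $k$ vertices of $\T$.

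First I would handle a single $d$-simplex $\Delta$ with vertex set $V$, $|V|=d+1$: here $|A_\Delta + k\Delta_{\mathrm{vert}}| \ge \binom{d+k}{k}|A_\Delta| - k\binom{d+k}{k+1}$ is exactly the base case, provable by the standard dilation/lattice-point argument (mapping $\Delta$ to the standard simplex, comparing with the $k$-dilate of that simplex which has $\binom{d+k}{k}$ lattice points, and accounting for the identifications on the boundary). Then I would set up the induction on the shelling: when simplex $\sigma_i$ is glued on, the new points of $A_i+kB$ that were not already in $A_{i-1}+kB$ are controlled by the points of $A$ lying strictly on the ``new'' side of $\sigma_i$ — precisely, on the faces of $\sigma_i$ containing the restriction face $R_i$. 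The key quantitative input is that gluing $\sigma_i$ contributes at least $\binom{d+k}{k}|A\cap(\text{new part of }\sigma_i)|$ new sums minus a correction, and that when $|R_i|=j\ge 2$ the correction is \emph{smaller} than in the naive estimate by exactly $\binom{d+k+1-j}{k+1-j}$, because a codimension-$(j-1)$ worth of boundary lattice points of the local $k$-dilated simplex is \emph{not} shared with previously placed simplices (those shared faces are exactly the ones not containing $R_i$). Summing the savings over all $i$ with $|R_i|=j$ gives $\sum_{j\ge 2} h_j\binom{d+k+1-j}{k+1-j}$, with $j\le k+1$ because a $k$-dilated simplex has no interior lattice points below dimension $k$, and $j\le d$ since $h_{d+1}$ would correspond to $R_i=\sigma_i$ which a shelling never produces for the last-added facet in the relevant way (more precisely $h_j=0$ for $j>d$ when we only need $j\le\min(d,k+1)$).

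I expect the main obstacle to be the bookkeeping at the gluing step: one must show that the lattice points of the $k$-dilate of $\sigma_i$ that get identified with already-counted elements of $A_{i-1}+kB$ are \emph{exactly} those supported on the union of facets of $\sigma_i$ not containing $R_i$, and that no spurious extra identifications occur across far-apart simplices. This is where shellability is essential — it guarantees that $\sigma_i\cap(\sigma_1\cup\dots\cup\sigma_{i-1})$ is precisely the union of those facets, a pure $(d-1)$-dimensional shellable ball — and where one needs that $A\supset B$ is not assumed but only $B\subset A\subset[B]$, so the estimate must be phrased in terms of $|A\cap\text{cell}|$ throughout and the vertices of $\T$ (which lie in $B\subseteq A\cup[B]$, hence are available as summands from $kB$). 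A secondary technical point is justifying the inclusion–exclusion/Möbius-type count that turns ``lattice points on faces containing $R_i$'' into the clean binomial $\binom{d+k+1-j}{k+1-j}$; this is a routine computation with the $f$-vector of a simplex and the number of lattice points in dilated faces, but it needs to be done carefully to get the exponents right. Once these two points are in place, summing the per-simplex inequalities along the shelling and collecting terms by $|R_i|$ yields the stated bound, and setting all $h_j=0$ for $j\ge 2$ (the stacked case) recovers Theorem~\ref{AkB} exactly.
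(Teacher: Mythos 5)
Your plan is, in substance, the paper's proof: shell $\T$, count contributions simplex by simplex, and extract from the restriction face $R_i$ (of size $j=s_i$) a bonus of $\binom{d+k+1-j}{k+1-j}$, namely the number of sums of $k+1$ vertices of $S_i$ whose barycentric coordinates are positive on $R_i$; that count, and the cutoff $j\le k+1$, are exactly right, and these points lie in $A+kB$ because $C_i\subseteq B\subseteq A$ (note that $B\subseteq A$ \emph{is} assumed in the statement, contrary to one garbled sentence of your plan). The step that would fail as literally described is the aggregation. You propose to telescope on $|A_i+kB|-|A_{i-1}+kB|$ with $A_i=A\cap(S_1\cup\dots\cup S_i)$, but for $s_i\ge 2$ the bonus points of stage $i$ are of the form $v+(\hbox{sum of $k$ vertices of }S_i)$ with $v$ a vertex of $S_i$ lying in an earlier simplex; such points already belong to $A_{i-1}+kB$ and are not in $(A\cap T_i)+kB$ at all, so the domain-side increment cannot be bounded below by the stage-$i$ contribution and the $h_j$-terms — the whole point of the refinement — get misattributed. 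The paper's mechanism is to partition the \emph{codomain} instead: with $T_i=S_i\setminus(S_1\cup\dots\cup S_{i-1})$ one has $T_i+kS_i=(k+1)T_i$, the half-open cells $(k+1)T_i$ partition $(k+1)[B]$, and one bounds $|(A+kB)\cap(k+1)T_i|$ from below for each $i$ separately by the disjoint union of $\bigl((A\cap T_i)\setminus C_i\bigr)+kC_i$ and $(k+1)C_i\cap(k+1)T_i$. This makes all your worries about ``spurious identifications across far-apart simplices'' evaporate, since distinct cells are disjoint by construction; with that reorganization your per-simplex counts sum directly to the claimed bound after the routine algebra with $h_0=1$ and $h_1=|B|-d-1$.
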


The $h$-vector $(h_0,h_1,\dots,h_d)\in\N^{d+1}$ (here and in what follows $\N=\{0,1,2,\dots\}$) of a $d$-dimensional triangulation is a classical invariant in geometric combinatorics, which can be read either from the $f$-vector (the number of simplices of each dimension) or from a shelling.
See more background on this topic in Section~\ref{sectriangs}.
Since $h_i\ge 0$ for every $i$, Theorem~\ref{theo:h-vector-intro} implies Theorem~\ref{AkB}. But it also tells us that in order to have equality in Theorem~\ref{AkB} all the shellable triangulations of $B$ need to have $h_i=0$ for all $i\ge 2$, which is equivalent to them having $|B|-d$ simplices. Hence, $B$ needs to be totally stackable.

It is worth noticing that  the inequality in Theorem~\ref{theo:h-vector-intro} is equivalent to
\[
|A +kB| \ge {d+k\choose k}|A\setminus B| + \sum_{j=0}^{\min(d,k+1)} h_j {d+k+1 - j \choose k+1 - j}. 
\]
For the case $A=B$ this leads to the following refinement of Corollary~\ref{AkA}:

\begin{coro}
\label{coro:h-vector-intro}
Let $\T$ be a shellable triangulation of $A$ with $h$-vector $(h_0,\dots,h_d)$. Then,
\[
|kA| \ge
\sum_{j=0}^{\min(d,k)} h_j {d+k - j \choose k - j}. 
\]
\end{coro}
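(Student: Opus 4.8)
The plan is to derive Corollary~\ref{coro:h-vector-intro} from Theorem~\ref{theo:h-vector-intro} simply by taking $B:=A$. If $\T$ is a shellable triangulation of $A$, then its $h$-vector $(h_0,\dots,h_d)$ is well defined (it does not depend on the chosen shelling), and with $B:=A$ the hypotheses of Theorem~\ref{theo:h-vector-intro} are met trivially: the chain $B\subseteq A\subseteq [B]$ becomes $A\subseteq A\subseteq [A]$, and $\T$ is a shellable triangulation of $B=A$. Using the reformulation of the bound displayed just above the corollary and noting that $A\setminus B=\emptyset$, we obtain, for every $k\in\N$,
\[
|A+kA| \geq \sum_{j=0}^{\min(d,k+1)} h_j\binom{d+k+1-j}{k+1-j}.
\]

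It then only remains to reindex. Since $A+kA=(k+1)A$, applying the previous inequality with $k$ replaced by $k-1$ (legitimate for every $k\geq 1$; the case $k=0$ of the corollary is the trivial identity $|0A|=1=h_0$) turns it into
\[
|kA| \geq \sum_{j=0}^{\min(d,k)} h_j\binom{d+k-j}{k-j},
\]
which is exactly the assertion of Corollary~\ref{coro:h-vector-intro}.

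If one would rather not invoke the reformulation, the same conclusion follows directly from Theorem~\ref{theo:h-vector-intro} as stated: applied with $B:=A$ and parameter $k-1$ it gives
\[
|kA| \geq \binom{d+k-1}{k-1}|A| - (k-1)\binom{d+k-1}{k} + \sum_{j=2}^{\min(d,k)} h_j\binom{d+k-j}{k-j},
\]
and one checks that the first two terms on the right equal $h_0\binom{d+k}{k}+h_1\binom{d+k-1}{k-1}$, using the standard facts $h_0=1$ and $h_1=|A|-d-1$ for any triangulation with vertex set $A$, together with the elementary identity $d\binom{d+k-1}{k-1}=k\binom{d+k-1}{k}$. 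There is no real obstacle in this argument: all of the substance is contained in Theorem~\ref{theo:h-vector-intro}, and the corollary is a purely bookkeeping consequence; the only step that needs a line of computation is the binomial identity above (equivalently, the verification that the two displayed forms of Theorem~\ref{theo:h-vector-intro} coincide).
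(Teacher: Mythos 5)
Your proposal is correct and is essentially the paper's own derivation: the corollary is obtained by setting $B=A$ in the reformulated version of Theorem~\ref{theo:h-vector-intro} (so that $|A\setminus B|=0$ and $A+kA=(k+1)A$) and then shifting $k$ to $k-1$. Your extra verifications (the binomial identity showing the two displayed forms of the theorem agree, and the trivial case $k=0$) are accurate bookkeeping that the paper leaves implicit.
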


The geometric structure of critical pairs is complemented by its arithmetic structure. To express  this arithmetic structure we introduce the following concepts.
For finite $B\subset\R^d$, we write $\Lambda(B)$ to denote the additive subgroup of $\R^d$ generated by $B-B$, and hence by $B$ if $0\in B$. We note that $\Lambda(B)$ is called a lattice if it is of rank $d$, which will be the typical case. 
 We say that $A\subset [B]$ is {\it stable} with respect to  $B$, or $B$--{\em stable} if
$$
 (A+\Lambda(B))\cap [B] =A.
$$

The fact that $A$ is $B$--stable provides a substantial arithmetic structure to $A$. For example,
suppose that $A$ is $B$--stable and let $l$ be a line intersecting $A$ and such that $\Lambda(B)$ contains non-zero vectors parallel to $l$. Let $w$ be the shortest such vector (which is unique up to sign). Then $A\cap l$ can be partitioned into arithmetic progressions with common difference $w$, each of which equals $(x+\Z w)\cap [B]$ for some $x\in \R^d$. If, in addition, $l$ contains an edge $[u,v]$ of $[B]$, then one of these arithmetic progressions contains the vertices $u,v$ of the edge.
In particular, for two parallel lines $l,l'$ intersecting $A$ these arithmetic progressions have the same common difference ($w$ depends only on the direction of $l$) and if the lines contain edges $e, e'$ with $\ell (e)\ge \ell (e')$ of $B$ then  the translation of $A\cap e'$ within $e$ matching one vertex of $e$ is contained in $A\cap e$.    

With these geometric and arithmetic ingredients,  Sections~\ref{secsimplex},~\ref{secplanar} and~\ref{secprism} lead to the following explicit characterization of the critical pairs via a case study based in the  characterization of totally stackable sets.

\begin{theo}
\label{AkBequalist}
Let  $k\geq 1$, $d\geq 1$, and  let $A,B\subset \R^d$ be finite such that $B$ spans $\R^d$, and  $A\subset[B]$. Equality holds in Theorem~\ref{AkB} if and only if $B\subset A$,
$B$ is  contained in the union of the edges of $[B]$, and one of the following conditions hold.
\begin{description}

\item{(i)} $|B|=d+1$. That is, $B$ is the vertex set of a $d$-simplex.

\item{(ii)}  For $d\geq 1$, $B$ consists of the vertices of the simplex $[v_0,\ldots,v_d]$, 
 and some extra points on the edge $[v_0,v_d]$. 
The points of $B$ on this
edge are part of an arithmetic progression
$D$ contained in $A$, and $A\backslash(B\cup D)$
is the disjoint union of translates of $D\setminus \{v_0\}$.

\item{(iii)} For $d\ge 2$, $[B]$ is a prism with parallel vertical edges. $A$ is stable with respect to  $B$ and contained in the vertical edges of the prism.

 \item{(iv)}  For $d= 2$, $A$ consists of the vertices of a triangle  and the midpoints of its sides. 

\item{(v)} For $d= 2$, $[B]$ is a paralelogram, and $A$ is stable with respect to $B$ and contained in the boundary of $[B]$.

\item{(vi)} For $2\leq q<d$, $A$ and $B$ are the unions of some $d-q$ points and sets $A', B'$ respectively, where $(A',B')$ is  a pair of $q$-dimensional sets  of type  (iii), (iv) or (v).
\end{description}
\end{theo}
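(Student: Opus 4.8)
The plan is to prove Theorem~\ref{AkBequalist} by combining the structural reductions already announced with a case analysis driven by the Nill--Padrol classification of totally stackable point configurations.

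\medskip

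\textbf{Step 1: The easy direction.} First I would verify that each of the configurations (i)--(vi) actually yields equality in Theorem~\ref{AkB}. For (i) this is the original computation of Matolcsi--Ruzsa (or a direct count: $A+kB$ for a simplex is a dilated lattice simplex). For (ii), (iii) and (v), where $A$ is built out of arithmetic progressions on edges of $[B]$ that are ``stable'', I would compute $|A+kB|$ directly: the stability hypothesis guarantees that the translates of $A$ by the $k$-fold sumset of the vertices of $B$ overlap in exactly the predicted way, so that an inclusion--exclusion (or a direct lattice-point count inside the dilate $k[B]$ together with the extra translated copies of $A\setminus B$) gives the claimed cardinality. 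Case (iv) is a small finite check. Case (vi) follows from (iii)--(v) because adding $d-q$ affinely independent points orthogonal to the $q$-dimensional part multiplies both sides by the corresponding binomial factor; I would make this ``coning'' step precise using the behaviour of the bound in Theorem~\ref{AkB} under taking joins/pyramids.

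\medskip

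\textbf{Step 2: The hard direction --- necessity.} Here I would assume $(A,B)$ is $k$-critical and extract structure in the order the introduction suggests. By Lemma~\ref{BinA} we have $B\subset A$; by Corollary~\ref{stack} (via Theorem~\ref{theo:h-vector-intro}) $B$ is totally stackable; and by Lemma~\ref{lemma:stackable} together with Theorem~\ref{theo:stackablechar} this pins down $[B]$ combinatorially --- the totally stackable configurations are exactly simplices with extra points on a single edge, prisms over a simplex with points on the lateral edges, the $2$-dimensional exceptions (triangle with midpoints, parallelogram), and pyramids/joins over these. This is precisely the list (i)--(vi) for $B$; what remains is to determine, in each case, which subsets $A\supset B$ of $[B]$ with $B\subset A\subset[B]$ keep equality. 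For this I would use Lemma~\ref{coro:subset-equa}: restricting to each proper subconfiguration $B'\subset B$ forces $(A\cap[B'],B')$ to be critical too, so an inductive/minimal-counterexample setup lets me assume the lower-dimensional and smaller cases are understood. The arithmetic input is the $B$-stability discussion in the introduction: I would show that criticality forces $A$ to be $B$-stable (any point of $A$ that can be translated by a vector of $\Lambda(B)$ while staying in $[B]$ but is ``missing'' from $A$ produces a point of $A+kB$ not accounted for, contradicting the exact count), and that $A$ must lie on the union of the edges of $[B]$ (an interior point, or a point on a higher-dimensional face, again over-counts). Combining combinatorial type with $B$-stability and the edge-confinement gives exactly (ii)--(vi); case (i) is the base case where $B$ is a simplex and any $A\subset[B]$ with $B\subset A$ must in fact equal... no: here one checks separately that for the simplex only $A=B$ plus edge-points survive, which merges into (ii).

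\medskip

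\textbf{Main obstacle.} I expect the principal difficulty to be the bookkeeping in Sections~\ref{secsimplex}, \ref{secplanar}, \ref{secprism}: turning ``totally stackable'' plus ``$B$-stable'' plus ``critical'' into the precise description of $A$ in the prism and parallelogram cases, where $A$ can be genuinely large (arithmetic progressions on several parallel edges of differing lengths, coherently translated). The sharp tool is the observation, quoted in the introduction, that for parallel edges $e,e'$ of $[B]$ with $\ell(e)\ge\ell(e')$ the translate of $A\cap e'$ matching a vertex of $e$ sits inside $A\cap e$; proving that this nesting is \emph{forced} by criticality (not merely permitted) is the crux, and it is where the $h$-vector refinement of Theorem~\ref{theo:h-vector-intro} --- applied not just to $B$ but to subconfigurations --- does the real work, since any failure of nesting creates a non-stacked triangulation of some $B'\subset B$ and hence a strict inequality upon restriction. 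Once that rigidity is in hand, assembling cases (ii)--(vi) and checking they are mutually exhaustive given the Nill--Padrol list is routine.
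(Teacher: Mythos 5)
Your skeleton (sufficiency check; then $B\subset A$, total stackability, the Nill--Padrol classification, restriction to subconfigurations via Lemma~\ref{coro:subset-equa}, and a case analysis) does match the paper's Sections~\ref{secsufficiency}--\ref{secproof}. But the step you yourself single out as the crux --- forcing the arithmetic structure of $A$ (stability, the arithmetic progressions, the nesting of $A\cap e'$ inside $A\cap e$) --- is attacked with a tool that cannot do the job. Theorem~\ref{theo:h-vector-intro} and its consequence Corollary~\ref{stack} see only the triangulations of $B$: every point of $A\setminus B$ contributes exactly ${d+k\choose k}$ to $|A+kB|$ by Claim~\ref{Matolcsi-Ruzsa-1}, wherever it sits, so the $h$-vector bound is blind to the positions of these points. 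A ``failure of nesting'' is a statement about $A$, not about $B$, and therefore cannot ``create a non-stacked triangulation of some $B'\subset B$'': the triangulations of $B'$ depend on $B'$ alone. Likewise, your claim that one can show directly that criticality forces $B$-stability is precisely what the authors say they \emph{cannot} do except in dimension one; in the paper stability is obtained only a posteriori from the full case analysis.

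What actually extracts the arithmetic rigidity is a family of coset-decomposition arguments that your proposal omits: Proposition~\ref{dimone} in dimension one (proved by passing to $\R/\Z$ and using $|A'+B'|\ge |A'|$); Lemma~\ref{join} together with Lemma~\ref{lemAkBinside}, which partition $A$ into classes modulo a subgroup $H$ generated by part of $B$ and show that each class lying in the relative interior of a face must satisfy the tight bound $|(A\cap L)+B_0|=(q+1)|A\cap L|$, thereby reducing to the one-dimensional equality case along edges; and Proposition~\ref{quadri}, which settles the quadrilateral by the same device and is then bootstrapped to pentagons (Lemma~\ref{polygon-trapezoid}) and prisms (Proposition~\ref{projectiveprism}). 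Some argument of this kind is indispensable; the $h$-vector refinement cannot substitute for it. On the sufficiency side your inclusion--exclusion sketch is plausible but unsubstantiated where it matters (the prism/parallelogram cases); the paper instead verifies condition (ii) of the shelling criterion Theorem~\ref{AkBequa} by an explicit lattice computation, and you would need an equally concrete verification there.
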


The characterization in Theorem~\ref{AkBequalist} reveals two interesting facts about critical pairs. 
\begin{itemize}
\item The characterization is independent of $k$. One direction (the fact that $k$--criticallity implies $(k-1)$--criticallity, if $k\ge 2$) is proved in Lemma~\ref{coro:k=1-equa}. The other direction is only proved as a consequence of the full characterization.

\item If $(A,B)$ is critical then $A$ is stable with respect to $B$. Actually, criticality of the pair $(A,B)$ depends on $A$ and the lattice $\Lambda (B)$ generated by the points of $B$ rather than the structure of $B$ itself. Again, without resorting to the full characterization, we only have a partial direct proof of this, namely the case of dimension one (Proposition~\ref{dimone}).
\end{itemize}

In turn, Theorem~\ref{AkBequalist} yields the following concerning the equality case of
 Corollary~\ref{AkA}.

 \begin{coro}
\label{AkAequalist}
Let  $k\geq 2$, $d\geq 2$, and  let the finite $A$  span $\R^d$. Equality holds in  Corollary~\ref{AkA} if and only if  one of the following conditions hold.
\begin{description}

\item{(i)} The set $A$ consists of the vertices of a simplex, and an arithmetic progression
contained in an edge of the simplex, starting and ending at the endpoints of the edge.

\item{(ii)} The set $A$ consists of the vertices of a simplex, and the midpoints of the sides of a  certain $2$-face of the simplex.

\item{(iii)}  For $d\geq q\geq 2$, $[A]$ is an iterated pyramid over a $q$-dimensional prism over a simplex  whose  vertical edges are parallel. There exists a non-zero  $w\in\R^d$
such that $A$ consists of the vertices of $[A]$, and for each vertical edge of the prism over a simplex,
 the arithmetic progression
of difference $w$ starting and ending at its endpoints.

\end{description}
\end{coro}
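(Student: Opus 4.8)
The plan is to derive this corollary from the full characterization in Theorem~\ref{AkBequalist} applied to the special case $A=B$. So suppose first that $A$ spans $\R^d$ and equality holds in Corollary~\ref{AkA}; since $kA = A + (k-1)A$ and Corollary~\ref{AkA} with parameter $k$ is exactly Theorem~\ref{AkB} with the pair $(A,A)$ and sum-parameter $k-1$, equality in Corollary~\ref{AkA} for $k\ge 2$ is the same as $(A,A)$ being a $(k-1)$--critical pair. Thus we may invoke Theorem~\ref{AkBequalist} with $B:=A$ and $k:=k-1\ge 1$. The condition $B\subset A$ is automatic, and the condition that $B$ lie on the edges of $[B]$ forces $A$ to consist of vertices of $[A]$ together with some points on its edges; this is consistent with every case (i)--(vi). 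The task is then to go through the six cases of Theorem~\ref{AkBequalist} under the extra hypothesis $A=B$ and translate each into the language of Corollary~\ref{AkAequalist}.

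The case analysis is largely bookkeeping. In case (i) of Theorem~\ref{AkBequalist}, $A=B$ is the vertex set of a simplex; this is a degenerate instance of the simplex-plus-arithmetic-progression situation (the progression has just two terms), so it folds into part (i) of the corollary, as does case (ii) of the theorem once we set $A=B$: the stability condition together with $A=B$ forces the ``extra points'' on the edge $[v_0,v_d]$ to themselves form the full arithmetic progression $D$ with endpoints $v_0,v_d$, and the translates of $D\setminus\{v_0\}$ that would otherwise appear in $A\setminus(B\cup D)$ are absent because $A=B$. Case (iv) with $A=B$ is exactly part (ii) of the corollary (triangle plus midpoints of one face; note $\binom{2}{1}=2$ points per side of that $2$-face is what ``midpoints'' means here). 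Cases (iii) and (v) with $A=B$ are the prism-over-a-simplex picture: stability of $A$ with respect to $B=A$ means $(A+\Lambda(A))\cap[A]=A$, and on each vertical edge this says exactly that $A$ meets that edge in the full arithmetic progression of some common difference $w$ starting and ending at its endpoints — for the prism over a simplex in case (iii), and for the parallelogram (which is the prism over a $1$-simplex) in case (v); both become part (iii) of the corollary with $q\ge 2$. Finally case (vi), the iterated-pyramid construction over a $q$-dimensional base of type (iii), (iv) or (v), becomes the ``iterated pyramid over a $q$-dimensional prism over a simplex'' of part (iii) once $A=B$ is imposed — again using that stability collapses the lower-dimensional base to the prism-over-simplex plus full arithmetic progressions on vertical edges, with the ``midpoint'' subcase (iv) being the $q=2$, progression-length-three specialization.

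The converse direction — that each of (i), (ii), (iii) of the corollary yields equality in Corollary~\ref{AkA} — follows by exhibiting each as a pair $(A,A)$ satisfying one of (i)--(vi) of Theorem~\ref{AkBequalist} and then invoking the ``if'' direction of that theorem: part (i) gives an instance of (i) or (ii) of the theorem with $A=B$; part (ii) gives an instance of (iv) (or (vi) with a $2$-dimensional base of type (iv)); part (iii) gives an instance of (iii), (v), or (vi). The main obstacle I anticipate is purely organizational rather than mathematical: making sure that the ``stable'' hypothesis in cases (iii), (v), (vi) of Theorem~\ref{AkBequalist}, when combined with $A=B$, is correctly unwound into the concrete ``arithmetic progression of difference $w$ on each vertical edge starting and ending at its endpoints'' description, and in particular checking that a single common difference $w$ works simultaneously for all vertical edges — this is forced because the vertical edges of a prism are mutually parallel, so by the parallel-edges remark preceding Theorem~\ref{AkBequalist} the relevant lattice vector $w$ depends only on the (common) direction of those edges. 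Verifying that the lengths of the progressions on different vertical edges may differ (only the common difference is shared) is the one place where one must be slightly careful, but the prism structure and $B$--stability handle it.
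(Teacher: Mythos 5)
Your overall strategy is exactly the paper's: the paper gives no separate proof of Corollary~\ref{AkAequalist} beyond the remark that it follows from Theorem~\ref{AkBequalist}, and your reduction is the right one --- $|kA|=|A+(k-1)A|$, so equality in Corollary~\ref{AkA} for $k\ge 2$ is precisely the statement that $(A,A)$ is a $(k-1)$--critical pair, after which one sorts the cases of Theorem~\ref{AkBequalist} under the constraint $A=B$. Your treatment of cases (i)--(v) is correct, including the two points that need care: stability of $A=B$ on a vertical edge $e$ forces $A\cap e$ to be exactly the full progression $(v+\Z w)\cap e$ (any point of $B$ on $e$ differs from an endpoint by a lattice vector parallel to $w$, hence an integer multiple of $w$), and a single $w$ serves all vertical edges because they are parallel.

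There is, however, one concrete slip in your sorting of case (vi). When the $q$-dimensional base is of type (iv) (triangle plus midpoints of its sides), the resulting set is an iterated pyramid over that configuration, i.e.\ a $d$-simplex together with the midpoints of the sides of one $2$-face; this lands in part (ii) of the corollary, not in part (iii). It is not a ``$q=2$, progression-length-three specialization'' of the prism case: a $2$-dimensional prism over a simplex is a quadrilateral, so an iterated pyramid over it has $d+2$ vertices, whereas the convex hull of the triangle-with-midpoints is a triangle and the ambient $[A]$ is a simplex with $d+1$ vertices --- the configurations of parts (ii) and (iii) are genuinely disjoint. Since you classify the non-pyramid instance of case (iv) correctly into part (ii), and match part (ii) to cases (iv)/(vi) correctly in the converse direction, this is a local bookkeeping error rather than a flaw in the method; but as written, the forward direction of your case (vi) omits the iterated pyramids over type-(iv) bases from part (ii) and wrongly assigns them to part (iii).
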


Actually,  Corollary~\ref{AkAequalist} has the concise form Corollary~\ref{AkAequaLambda}. To state this, we say that a triangulation $\T$ of a finite set $A$ spanning $\R^d$ is unimodular if $\Lambda(A)$ is a lattice with determinant $\Delta$, and
 each full dimensional simplex of $\T$ has volume $\Delta/d!$. We note that  if  
$A$ has a stacked unimodular triangulation, then all of its triangulations are unimodular and stacked.

 \begin{coro}
\label{AkAequaLambda}
Let  $k\geq 2$, $d\geq 2$, and  let the finite $A$  span $\R^d$. Equality holds in  Corollary~\ref{AkA} if and only if  
$A$ has a stacked unimodular triangulation.
\end{coro}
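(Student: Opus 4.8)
The plan is to deduce Corollary~\ref{AkAequaLambda} from the more explicit Corollary~\ref{AkAequalist} by showing that the three cases (i), (ii), (iii) listed there are exactly the sets $A$ that admit a stacked unimodular triangulation. One direction is almost formal: if $A$ has a stacked unimodular triangulation $\T$, then by the remark in the excerpt all of its triangulations are unimodular and stacked, so its $h$-vector is $(1,|A|-d-1,0,\dots,0)$, and Corollary~\ref{coro:h-vector-intro} together with a short binomial computation shows equality in Corollary~\ref{AkA}. (Alternatively, one can just verify directly that each of (i), (ii), (iii) of Corollary~\ref{AkAequalist} carries an obvious stacked unimodular triangulation: in case (i) triangulate by coning the subdivided edge appropriately; in case (ii) the four sub-triangles of the subdivided $2$-face, coned with the remaining vertices; in case (iii) the standard ``staircase'' triangulation of the prism over a simplex, which is unimodular, iterated-pyramided trivially.) So the substance is the converse.

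For the converse, I would take a critical pair $(A,A)$ and invoke Corollary~\ref{AkAequalist} to land in one of the three cases, then exhibit in each case a stacked unimodular triangulation. The key point common to all three is that $A$ lies on the edges of the simplex/prism and, being $A$-stable (equivalently the edge points form arithmetic progressions with the right common difference $w$ coming from $\Lambda(A)$), the lattice $\Lambda(A)$ is generated in a very controlled way. Concretely: in case (iii) one should show that the vertices of the prism-over-a-simplex together with the fixed vector $w$ generate a lattice in which the staircase triangulation of the prism is unimodular; this is where the condition ``vertical edges parallel'' and ``same arithmetic progression difference $w$ on each vertical edge'' is exactly what is needed for $\det\Lambda(A) = d!\cdot(\text{volume of each staircase simplex})$. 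Cases (i) and (ii) are the degenerate low-complexity instances and should be checked by hand.

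The main obstacle I anticipate is the unimodularity bookkeeping in case (iii): one must simultaneously control the lattice generated by all the edge points and verify that some (hence every) triangulation refining the staircase structure has all simplices of the common volume $\det\Lambda(A)/d!$. The cleanest route is probably to reduce to the prism over a simplex itself (the iterated-pyramid part adds vertices in general position and does not affect unimodularity), choose coordinates so that the simplex base is the standard simplex and $w$ is a primitive vector, observe that $\Lambda(A)$ is then the standard lattice $\Z^d$ after an affine change of coordinates, and recall that the staircase triangulation of $\Delta_{q-1}\times[0,1]$ into $q$ unimodular simplices is classical. Then count: $|A| - d - 1$ simplices, each of normalized volume $1$, so the $h$-vector is $(1,|A|-d-1,0,\dots,0)$ and the triangulation is stacked and unimodular. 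Finally I would note that this gives the promised ``concise form'': the explicit list of Corollary~\ref{AkAequalist} collapses to the single clean statement that $A$ has a stacked unimodular triangulation, and conversely.
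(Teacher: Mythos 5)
Your overall strategy --- deducing the corollary from the explicit list in Corollary~\ref{AkAequalist} --- is the route the paper implicitly takes (it offers no separate proof, presenting the corollary as the ``concise form'' of Corollary~\ref{AkAequalist}), and your treatment of the direction ``equality $\Rightarrow$ stacked unimodular triangulation'' is sound: each of the cases (i)--(iii) does carry a stacked triangulation whose simplices all have volume $\det\Lambda(A)/d!$, and the lattice bookkeeping you outline for the prism case is exactly what is needed.

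There is, however, a genuine gap in the direction you call ``almost formal.'' Corollary~\ref{coro:h-vector-intro} is a \emph{lower} bound on $|kA|$, and for a stacked triangulation ($h_j=0$ for $j\ge 2$) your binomial computation only shows that this lower bound degenerates to the Freiman--Matolcsi--Ruzsa bound of Corollary~\ref{AkA}; it cannot show that equality is attained. Indeed, stackedness alone does not suffice: for $A=\{0,\tfrac13,1\}\subset\R$ and $k=2$ the (unique, stacked, non-unimodular) triangulation gives $|2A|=6>5$. Unimodularity must enter, and it enters through an \emph{upper} bound that your sketch never supplies: after translating so that $0\in A$, one has $kA\subset \Lambda(A)\cap k\cdot[A]$, and for a lattice polytope with a unimodular triangulation the number of lattice points in $k\cdot[A]$ is $\sum_j h_j\binom{d+k-j}{d}$, which for $h=(1,|A|-d-1,0,\dots,0)$ equals $\binom{d+k-1}{k-1}|A|-(k-1)\binom{d+k-1}{k}$; combining with the lower bound then yields equality. (Alternatively, show that a stacked unimodular triangulation forces $A$ into one of the cases of Corollary~\ref{AkAequalist} --- using the paper's remark that one stacked unimodular triangulation makes \emph{all} triangulations stacked, hence $A$ totally stackable and subject to Theorem~\ref{theo:stackablechar} --- and quote the sufficiency half of Theorem~\ref{AkBequalist}.) Your parenthetical ``alternative,'' checking that the cases (i)--(iii) admit stacked unimodular triangulations, addresses the converse direction and does not repair this one.
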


To prove Theorem~\ref{AkBequalist}, first we consider the one-dimensional case in 
Section~\ref{secdim1}, which is the base of the arithmetic structure of critical pairs. Next we discuss some useful properties of triangulations
of convex polytopes in Section~\ref{sectriangs}. Section~\ref{seccomb} reviews the proof of the Matolcsi-Ruzsa inequality Theorem~\ref{AkB}, and concludes with a technical, but useful, characterization (Theorem~\ref{AkBequa}) of the equality case.
Based on this result, we show in Section~\ref{secsufficiency} that 
the pairs $(A,B)$ listed in Theorem~\ref{AkBequalist} are $k$--critical for any $k\geq 1$. Theorem~\ref{AkBequa}
is also the base of the arguments leading to the fundamental properties
of $k$--critical  pairs in  Section~\ref{secbasic-critical}.
Finally, a case by case analysis in Sections~\ref{secsimplex}, \ref{secplanar} and \ref{secprism} 
describes explicitely the arithmetic structure of the cases in Theorem~\ref{AkBequalist}. In Section~\ref{secproof} we show how the
results of the previous sections imply that the list in Theorem~\ref{AkB} is complete.

\section{The case of dimension one}
\label{secdim1}

It is instructive to discuss the one-dimensional  version of Theorem~\ref{AkBequalist} first, because it does not require the geometric machinery built later on, and it provides the base of the arithmetic structure of  higher dimensional critical pairs.

For rational $0\leq b_1<\ldots<b_n$, $n\geq 2$, we define ${\rm gcd}\{b_1,\ldots,b_n\}$
to be the largest rational number $w$ such that $b_1/w,\ldots,b_n/w$ are integers. We observe that if $A,B\subset\R$ are finite such that $A\subset [B]= [0,1]$, then $A$
being stable with respect to $B$ is equivalent to saying  that $B\subset \mathbb{Q}$, and $A$ is the union of maximal arithmetic progressions in $[0,1]$ with difference  $w={\rm gcd} (B)$.

We note that the one--dimensional version of
Theorem~\ref{AkB} reads as follows. If $A,B\subset \R$ are finite sets with
$A\subset[B]$, and $k\geq 1$, then
\begin{equation}
\label{AkBdim1}
|A+kB|\geq (k+1)(|A|-1)+1.
\end{equation}

The first part of the next proposition gives the one--dimensional version of Theorem~\ref{AkBequalist}. The second part will be used later.
  
\begin{prop}
\label{dimone}
Let $k\geq 1$, and let $A,B\subset\R$ be finite such that $A\subset[B]= [0,1]$.
\begin{description}
\item{(i)} The pair $(A,B)$ is $k$--critical if and only if $\{ 0,1\}\subset A$ and $A$ is stable with respect to $B$.

\item{(ii)} If $C\subset(0,1)$ is finite, then
\begin{equation}
\label{CkBdim1}
|C+kB|\geq (k+1)|C|,
\end{equation}
with equality if and only if $C$  is stable with respect to $B$.
\end{description}
\end{prop}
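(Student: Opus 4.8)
The plan is to prove (i) and (ii) together, extracting a self-contained one-dimensional argument that will serve as the arithmetic backbone of the general theory. The core observation is that after clearing denominators (which we may do, as the problem is invariant under affine rescaling, and the inequalities only involve cardinalities) we may assume $B\subset\Z$ with $\gcd(B)=1$, so $[B]=[0,m]$ for some integer $m$ and $\Lambda(B)=\Z$. I would first handle the lower bounds. For (\ref{AkBdim1}), note that $kB\subset [0,km]$ contains $0$ and $km$; write the elements of $A$ as $a_1<\cdots<a_{|A|}$ and observe that $A+kB$ contains the $k+1$ translates $A, A+\beta, A+2\beta,\dots$ is too crude — instead the standard argument is to take $b^-=\min B$, $b^+=\max B$ and use the chain of sets $a_1+kB,\ a_2+(k-1)b^-+b^+ +\dots$; more cleanly, one shows $|A+kB|\ge |A+kb^-| + k(|A|-1) = |A| + k(|A|-1)=(k+1)(|A|-1)+1$ by noting that $A+kb^-$, together with, for each consecutive pair $a_i<a_{i+1}$ in $A$ and each $j=1,\dots,k$, the point $a_{i+1}+jb^+ + (k-j)b^-$ strictly to the right of everything seen so far, gives $k(|A|-1)$ new points. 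The same bookkeeping with $C$ in place of $A$, now \emph{not} assuming $0,1\in C$ but using that $C$ has a smallest and largest element in the open interval, gives (\ref{CkBdim1}): here we even get one extra point at the bottom, namely $c_1+kb^-$ versus the lowest point forced from outside, which is why the ``$-1$'' disappears.

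Next, the equality analysis. For (ii), suppose $|C+kB|=(k+1)|C|$. I would argue that equality in the counting scheme forces, for every consecutive pair $c_i<c_{i+1}$ of $C$, that all of $c_{i+1}+jb^++(k-j)b^-$ for $j=0,\dots,k$ are \emph{distinct} and moreover that the ``gap structure'' is rigid: concretely, that $c_{i+1}-c_i$ is divisible by $\gcd(B)=1$ is automatic, but what equality really forces is that no element of $C+kB$ is produced in two ways except as accounted for, which translates into $C$ being a union of full arithmetic progressions of difference $1$ inside $(0,m)$ — i.e.\ $C$ is $B$-stable. The cleanest route is induction on $|C|$: remove the top progression of $C$ and check that the count drops by exactly $(k+1)$ times its length iff that progression is ``maximal'', i.e.\ cannot be extended within $(0,m)$ and is separated from the rest of $C$ by the right amount. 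For (i), the pair $(A,B)$ with $0,1\in A$: write $A=\{0\}\cup C\cup\{m\}$ (in cleared coordinates $A=\{0\}\cup C \cup\{m\}$ with $C\subset(0,m)$). Then $|A+kB| = |C+kB|$ plus the new points coming from the two endpoints $0$ and $m$; a short computation shows $|A+kB|\ge |C+kB| + (k+1)$, with equality iff $\{0,\dots,\text{first few}\}$ and $\{\dots,m\}$ attach to $C$ as maximal progressions — combining with part (ii) gives exactly the statement that $A$ is $B$-stable. The converse direction (stable $\Rightarrow$ critical) is a direct computation: if $A$ is a union of length-$\ell_1,\dots,\ell_r$ progressions of difference $1$, then $A+kB = A + k[0,m]$ when $\gcd(B)=1$? — not quite, but $A+kB$ is computed progression-by-progression and one checks the total is exactly $(k+1)(|A|-1)+1$.

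The main obstacle I anticipate is making the equality bookkeeping fully rigorous: the lower-bound proof ``finds'' $k(|A|-1)$ new points greedily, but to characterize equality one must show these are the \emph{only} new points and that any deficiency in the count corresponds precisely to a failure of stability. The subtle point is that collisions in $A+kB$ can happen for two unrelated reasons — either because $B$ itself has small $\gcd$ relative to the actual spacing (handled by normalizing $\gcd(B)=1$ and noting stability then demands difference exactly $1$ in cleared coordinates), or because two progressions of $A$ are ``in phase'' and closer than a full period apart, causing their $kB$-sumsets to overlap more than expected. I would isolate this in a lemma: for $P,P'$ arithmetic progressions of difference $1$ in $\Z$, $|P+kB\ \cup\ P'+kB| \ge (k+1)(|P|+|P'|)$ with a clean equality condition in terms of the gap between $P$ and $P'$ and the length $km$ of $kB$'s hull. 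Granting that lemma, both (i) and (ii) follow by summing over the progressions of $C$ (resp.\ $A$) and tracking when each inequality is tight.
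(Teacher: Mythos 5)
There are two genuine gaps in your plan, both sitting exactly where the content of the proposition lies (the lower bounds themselves are easy, and your greedy counting for them is essentially fine). The first is the opening normalization: ``after clearing denominators we may assume $B\subset\Z$ with $\gcd(B)=1$'' is not a harmless rescaling, because nothing in the hypotheses makes $B$ commensurable. If, after scaling $[B]$ to $[0,1]$, the set $B$ contains a point with irrational ratio to the endpoints, then $\Lambda(B)$ is dense in $\R$, no nonempty finite set is $B$--stable, and the proposition is asserting that such a pair is \emph{never} critical --- a case your framework cannot even express, since there is no integer model and no ``difference $1$''. The rationality of $B$ is part of what must be \emph{proved} in the only-if direction, not an assumption you may impose at the outset.

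The second gap is that the equality characterization --- the entire point of the statement --- is never actually established: you reduce it to an unproven lemma about pairs of progressions and explicitly flag the ``bookkeeping'' as the main obstacle, while the induction you sketch (``remove the top progression of $C$'') already presupposes that $C$ decomposes into difference-$1$ progressions, which is the conclusion. The paper closes both gaps with one move that your proposal is missing: pass to the quotient $\R/\Z$ (so $0$ and $1$ are identified and $|A'|=|A|-1$), observe that $A+kB$ contains a disjoint union forcing $|A+kB|\ge |A'+B'|+k|A'|+1$, so that criticality yields $|A'+B'|=|A'|$, i.e.\ $A'+B'=A'$. A finite set of the torus invariant under translation by every element of $B'$ is a union of cosets of the subgroup generated by $B'$, which must therefore be finite; this simultaneously proves $B\subset\Q$ and identifies $A$ as a union of maximal progressions of difference $\gcd(B)$, i.e.\ as $B$--stable. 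Your counting does get you close --- equality in your scheme amounts to $A+kB=A+k\{0,1\}$, hence $A+b\subset A\cup(A+1)$ for each $b\in B$, which is precisely the statement $A'+B'=A'$ in disguise --- but the group-theoretic step converting this into stability (and into rationality of $B$) is the missing idea, and without it the proof is incomplete.
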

\begin{proof}  If $A$ is stable with respect to $B$, then
$$
A+kB=A+k\{0,1\},
$$
and hence equality holds in \eqref{CkBdim1}, and also in \eqref{AkBdim1} 
provided that $\{ 0,1\}\subset A$.

We note that if either $0\not \in A$ or $1\not \in A$, then a translate of $A$ is contained in $(0,1)$,  a case dealt with  in (ii) which shows that $(A,B)$ is not $k$--critical. 
Thus let the pair $(A,B)$ be $k$--critical with $\{ 0,1\}\subset A$.  

If $B=\{0,1\}$, then  $A$ is clearly stable with respect to $B$.
Therefore we may assume that $|B|\geq 3$. We write $X'$ to denote the image of $X\subset\R$ in the torus $\R/\Z$ by the quotient map. In particular
\begin{equation}
\label{Aminus1}
|A'|=|A|-1.
\end{equation}
Let $\widetilde{A}\subset[0,2)$ be the set obtained by choosing the smallest element of $A+B$ in each  coset of $\Z$ intersecting $A+B$. 
 Since $0\in B$ yields that $\widetilde{A}\cap (A+1)=\emptyset$, the sum 
$A+kB$ contains 
 the disjoint union
$$
\{k+1\}\cup\widetilde{A}\cup ((A\backslash\{1\})+1) \cup\ldots\cup ((A\backslash\{1\})+k).
$$
We deduce using $|A'+B'|\geq |A'|$ that
\begin{equation}
\label{Aprimeineq}
|A+kB|\geq |A'+B'|+k|A'|+1\geq (k+1)|A'|-1.
\end{equation}
As  the pair $(A,B)$ is $k$--critical, (\ref{AkBdim1}) and (\ref{Aminus1}) yield that
$|A'+B'|= |A'|$.  In particular
$$
a+b\in A'\mbox{ \ for $a\in A'$ and $b\in B'$}.
$$
We deduce from $|B|\geq 3$  that there exists some non-zero element of $B'$,
which in turn implies by the finiteness of $A'$ that $B'$ generates a finite subgroup $H$ of $\R/\Z$, and $A'$ is the union of some cosets of $H$. It follows that $B\subset\Q$,
and $H$ is generated by $w'$ for $w={\rm gcd} (B)$.  This implies (i).
The argument for (ii) is completely analogous, only $k+1\not\in C+kB$, and hence
(\ref{Aprimeineq}) is replaced by $|C+kB|\geq |C'+B'|+k|C'|$ where $|C'|=|C|$.
\end{proof}

\section{Some observations about triangulations}
\label{sectriangs}

Throughout this paper, a triangulation of a finite point set $B\subset\R^d$ is a geometric simplicial complex with vertex set $B$ and underlying space $[B]$. A triangulation will be given as a list of $d$-simplices.

Let ${\T}=\{S_1,\dots, S_m\}$ be a triangulation of $B$. We say that the ordering $S_1,\dots, S_m$ of the simplices of $\T$ is a \emph{shelling} if, for every $i$, the intersection of $S_i$ with $S_1\cup\dots\cup S_{i-1}$ is a union of facets of $S_i$. Equivalently, if  $S_1\cup\dots\cup S_{i}$ is a topological ball for every $i$. The index of a simplex $S_i$ in a shelling is the number of facets of $S_i$ that are contained in $S_1\cup\dots\cup S_{i-1}$. That is, the index of $S_1$ is zero and the index of every other $S_i$ is an integer between 1 and $d$. The $h$-vector of a shelling is the vector $h=(h_0,\dots,h_d)$ with $h_i$ equal to the number of simplices of index $i$. We recall without proof some simple facts about shellings and $h$-vectors (see \cite[Section 9.5.2]{LRS10} or \cite[Chapter 8]{Zie94} for details):

\begin{lemma}
\label{lemma:shellings}
\begin{description}
\item{(i)} Not every triangulation is shellable, but every point set has shellable triangulations. For example, all \emph{regular triangulations} (which include \emph{placing}, \emph{pulling} and \emph{Delaunay} triangulations) are shellable.

\item{(ii)} The $h$-vector of a shellable triangulation is independent of the choice of shelling. In fact, the $h$-vector of a (perhaps non-shellable) triangulation can be defined as
\[
h_k = \sum_{i=0}^k  (-1)^{k-i} {d+1-i \choose k-i} f_{i-1},
\]
where $(f_{-1},\dots,f_d)$ is the $f$-vector of $\T$. That is, $f_i$ is the number of $i$-simplices in $\T$, with the convention that $f_{-1}=1$.

\item{(iii)} Every triangulation of $B$ has $h_0=1$, $h_1= |B|-d-1$, and $\sum h_i=m$, where $m$ and $d$ are the number of $d$-simplices and the dimension of $\T$.
\end{description}
\end{lemma}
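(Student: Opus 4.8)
The lemma collects three classical facts about shellings and $h$-vectors; the plan is to prove each in turn, with the only non-bookkeeping part being one geometric ingredient in item (i).

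For item (i), the existence of a shellable triangulation of any finite $B$ reduces to two standard facts: that $B$ admits a \emph{regular} triangulation, and that regular triangulations are shellable. For the first, I would lift $B$ to $\hat B=\{(b,\omega(b)):b\in B\}\subset\R^{d+1}$ by a suitably generic height function $\omega\colon B\to\R$ (generic enough that the lower facets of $[\hat B]$ are simplices and that every point of $B$ is a lower vertex); these lower facets project down to a triangulation $\T_\omega$ of $B$. For the second, I would apply the Bruggesser--Mani line-shelling technique to the lower envelope of $[\hat B]$: a line that sweeps the lower facets of $[\hat B]$ before meeting any upper one orders those facets into a shelling, which projects to a shelling of $\T_\omega$. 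That placing, pulling and Delaunay triangulations are particular regular triangulations (for instance, Delaunay uses $\omega(b)=|b|^2$) is classical and can be quoted. For the \emph{non}-shellable half of (i), no real argument is needed beyond exhibiting or citing one of the classical small non-shellable triangulations of the $3$-ball.

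For item (ii), the plan is the usual shelling face-count. Given a shelling $S_1,\dots,S_m$, let $j_i$ be the index of $S_i$ and set $R_i=\{v\in S_i: S_i\setminus\{v\}\subset S_1\cup\cdots\cup S_{i-1}\}$, so that $|R_i|=j_i$. Since $S_i\cap(S_1\cup\cdots\cup S_{i-1})$ is exactly the union of the facets $S_i\setminus\{v\}$ with $v\in R_i$, a face $G\subseteq S_i$ lies in $S_1\cup\cdots\cup S_{i-1}$ if and only if $R_i\not\subseteq G$; hence every simplex of $\T$ is ``new'' at exactly one step $i$ (its first appearance), and counting the $(t-1)$-faces that are new at step $i$ gives
\[
f_{t-1}=\sum_{i=1}^m\binom{d+1-j_i}{t-j_i}=\sum_{j=0}^d h_j\binom{d+1-j}{t-j},\qquad 0\le t\le d+1.
\]
This triangular system inverts (binomial inversion) to $h_k=\sum_{i=0}^k(-1)^{k-i}\binom{d+1-i}{k-i}f_{i-1}$, so the shelling $h$-vector depends only on the $f$-vector, hence not on the chosen shelling; and this same formula is then taken as the definition of the $h$-vector of an arbitrary (possibly non-shellable) triangulation.

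Item (iii) is then pure specialization: from the inversion formula, $h_0=\binom{d+1}{0}f_{-1}=1$ and $h_1=f_0-(d+1)f_{-1}=|B|-d-1$, using $f_{-1}=1$ and that the vertex set of $\T$ is $B$ so $f_0=|B|$; and putting $t=d+1$ in the displayed identity yields $f_d=\sum_j h_j$, that is $\sum_j h_j=m$, which is also clear directly since a shelling lists all $m$ full-dimensional simplices of $\T$. The only step with genuine geometric content is the shellability of regular triangulations in (i), and the care needed there is exactly to choose the sweeping line generically enough that the induced line shelling peels off the entire lower hull of $[\hat B]$ before meeting any upper facet; everything else is binomial bookkeeping.
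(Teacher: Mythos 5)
Your proposal is correct, but note that the paper deliberately gives no proof of this lemma: it ``recalls without proof'' these facts and points to \cite{LRS10} and to Ziegler's book. What you have written is precisely the standard argument from those references --- generic lifting plus a Bruggesser--Mani line shelling of the lower envelope for (i), the restriction-set face count $f_{t-1}=\sum_j h_j\binom{d+1-j}{t-j}$ followed by binomial inversion for (ii), and specialization at $t=0,1,d+1$ for (iii) --- and all the steps check out (in particular the claim that a face $G\subseteq S_i$ is ``old'' iff $R_i\not\subseteq G$, which is what makes each face counted exactly once). So there is nothing to compare against in the paper itself; your write-up is a correct self-contained substitute for the citation.
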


One useful way of constructing triangulations of a point set is the \emph{placing} procedure, which is recursively defined as follows
(see~\cite[Section 4.3.1]{LRS10} for more details).
Let $B\subset\R^d$ be a finite point set and let $x\in B$ be such that $B':=B\setminus \{x\}$ is $d$-dimensional and $x\not\in[B']$. If $\T'$ is a
triangulation of  $B'$, we call \emph{placing of $x$ in $\T'$} the triangulation $\T$ of $B$ obtained adding to $\T'$ the pyramids with apex at $x$ of all
the boundary $(d-1)$-simplices of $\T'$ that are \emph{visible from $x$}. Here, we say that a $(d-1)$-simplex $S$ in the boundary of $B'$ is visible
from $x$ if its supporting hyperplane $H$ separates $x$ from $B'\setminus H$. Equivalently, if $[x,y]\cap [B'] = \{y\}$ for every point $y\in S$.
It can be shown that if $\T'$ is shellable then $\T$ is shellable too.

The placing procedure can be used to construct a (shellable) triangulation of $B$ from scratch, by choosing an initial simplex $S=[x_1,\dots,x_{d+1}]$ with $\{x_1,\dots,x_{d+1}\}\subset B$ and $S\cap (B\setminus \{x_1,\dots,x_{d+1}\}) = \emptyset$, or to extend a given triangulation of a subset $B'\subset B$ with $[B'] \cap (B\setminus B') = \emptyset$.

We observe that if $C=\{x_1,\ldots,x_{d+1}\}$ is affinely independent, 
$s\in\{1,\ldots,d+1\}$, and $t>0$, then for the facets $F_j=[C\backslash x_j]$ of $[C]$,
$j=1,\ldots,s$, we have
\begin{equation}
\label{relint}
t\cdot\left([C]\backslash  \left(\cup_{j=1}^s F_j\right)\right)=
\left\{\sum_{j=1}^{d+1}\lambda_jx_j:\,
\lambda_j>0 \mbox{ for $j\leq s$},\;\forall\lambda_j\geq 0,\;
\sum_{j=1}^{d+1}\lambda_j=t\right\}.
\end{equation}
Therefore if $k\geq 1$, and $S_1,\dots, S_m$ is a shelling of a triangulation $\T$, then
\begin{equation}
\label{TiSi}
T_i+kS_i = (k+1)T_i
\mbox{ \ for $i=2,\ldots,m$ and $T_i=S_i\backslash(S_1\cup\ldots\cup S_{i-1})$}.
\end{equation}

Of special interest for us will be stacked triangulations. A \emph{stacked triangulation} is one that satisfies any of the following equivalent properties, and they are a particular case of placing triangulations, hence shellable:

\begin{lemma}
\label{stackedequi}
The following properties are equivalent, for a triangulation $\T$ of a point set $B$.
\label{lemma:stacked}
\begin{description}
\item{(i)} The number of $d$-simplices in $\T$ equals $|B|-d$.
\item{(ii)} $h_i=0$ for all $i\ge 2$.
\item{(iii)} The dual graph of $\T$ is a tree. The dual graph is the graph having as vertices the $d$-simplices of $\T$ and as edges the adjacent pairs (pairs that share a facet).
\item{(iv)} Every simplex of dimension at most $d-2$ of $\T$ is contained in $\partial [B]$.
\end{description}
\end{lemma}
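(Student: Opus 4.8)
The plan is to close the loop $(i)\Leftrightarrow(ii)\Leftrightarrow(iii)\Rightarrow(iv)\Rightarrow(iii)$, using three ingredients: the linear relations on the $h$-vector recorded in Lemma~\ref{lemma:shellings}, the connectedness of the dual graph of a triangulation of a polytope, and the fact that the link of an interior face of $\T$ is a sphere while the link of a boundary face is a ball. The equivalence $(i)\Leftrightarrow(ii)$ is immediate: for every triangulation one has $h_0=1$, $h_1=|B|-d-1$, $\sum_{i=0}^{d}h_i=m$ and $h_i\ge 0$ (the nonnegativity of the $h$-vector of a triangulation of a point set being standard; see \cite{LRS10}), so $m=(|B|-d)+\sum_{i\ge 2}h_i$, which equals $|B|-d$ exactly when $h_i=0$ for all $i\ge 2$.

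For $(ii)\Leftrightarrow(iii)$ I would argue via an edge count in the dual graph $G$. First, $G$ is connected: a generic segment between interior points of two $d$-simplices meets a chain of $d$-simplices that are consecutively adjacent. The edges of $G$ are exactly the $(d-1)$-faces of $\T$ not contained in $\partial[B]$, and double counting incidences of $d$-simplices with their facets shows that their number is $(d+1)m-f_{d-1}$, which a short computation with the formula of Lemma~\ref{lemma:shellings}(ii) (or inspection of any shelling, where each such face is ``closed off'' once and contributes $1$ to the index of the later of its two simplices) identifies with $\sum_{i=0}^{d} i\,h_i$. Since a connected graph on $m$ vertices is a tree iff it has $m-1$ edges, $(iii)$ amounts to $\sum_i i\,h_i=\sum_i h_i-1$, i.e.\ $\sum_{i\ge 2}(i-1)h_i=0$, i.e.\ (using $h_i\ge 0$) to $(ii)$.

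For $(iii)\Rightarrow(iv)$ I would prove the contrapositive: if $\sigma\in\T$ has $\dim\sigma\le d-2$ and is not contained in $\partial[B]$, then $\operatorname{link}_\T(\sigma)$ triangulates a sphere of dimension $\ge 1$, whose dual graph contains a cycle; joining each of these adjacencies with $\sigma$ yields a cycle of $d$-simplices of $\T$ whose consecutive intersections contain $\sigma$, hence are not in $\partial[B]$, hence are edges of $G$, so $G$ is not a tree. The reverse implication $(iv)\Rightarrow(iii)$ is the step I expect to be the main obstacle, and here the geometric hypothesis is essential. Under $(iv)$, any two distinct $(d-1)$-faces of $\T$ not lying in $\partial[B]$ meet in a face of dimension $\le d-2$, hence inside $\partial[B]$; so their relative interiors are disjoint and lie in $\operatorname{int}[B]$, and moreover the boundary of each such face lies in $\partial[B]$, so each is a \emph{properly embedded} $(d-1)$-disk — and a properly embedded $(d-1)$-disk in a $d$-ball separates it into two balls (by the generalized Schoenflies theorem; note this would fail for a disk with part of its boundary in the interior, which is exactly why $(iv)$ is needed). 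Cutting $[B]$ along all these interior $(d-1)$-faces, the resulting pieces are exactly the $d$-simplices of $\T$ (a generic path between interior points of two $d$-simplices must cross an interior $(d-1)$-face) and two pieces are adjacent across a cut precisely when the corresponding simplices are adjacent in $\T$, so the adjacency graph of the pieces is $G$. An induction on the number of cuts then finishes: removing one of the disks splits the ball into two balls, each remaining disk lies wholly in one of them (being connected and, in the interior, disjoint from the removed disk), and the two trees obtained by induction are joined into a single tree by the unique edge coming from the removed disk. Hence $G$ is a tree.

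The only genuinely nontrivial point is $(iv)\Rightarrow(iii)$. A purely combinatorial substitute — deducing from a cycle in $G$ an interior face of codimension $\ge 2$ — is easy for a $3$-cycle (the common face of the three simplices is a $(d-2)$-face that cannot lie on $\partial[B]$, since the link of a boundary $(d-2)$-face is an arc and an arc cannot contain three pairwise-adjacent edges), but the reduction of an arbitrary cycle to this case is awkward, which is why I would use the separation argument above. Everything else reduces to the standard facts recalled in Lemma~\ref{lemma:shellings} together with elementary graph theory.
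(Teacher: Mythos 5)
Your proof is correct, but it is organized quite differently from the paper's. The paper routes every nontrivial implication through shellability: it orders the simplices so that each shares a facet with the union of its predecessors, and shows that each of (i), (iii), (iv) forces this ordering to be a shelling; once shellability is in hand, (ii)$\Rightarrow$(iii)$\Rightarrow$(iv)$\Rightarrow$(ii) are immediate and $h_i\ge 0$ comes for free. You instead close the cycle of implications directly: (ii)$\Leftrightarrow$(iii) via the edge count $\#E(G)=(d+1)f_d-f_{d-1}=\sum_i i\,h_i$ together with ``connected on $m$ vertices with $m-1$ edges iff tree''; (iii)$\Rightarrow$(iv) via the sphere-link of an interior face of codimension at least $2$ (the same device the paper uses, but in the proof of Lemma~\ref{lemma:stackable} rather than here); and (iv)$\Rightarrow$(iii) via the separation-and-cut induction, which expands the paper's one-line assertion that under (iv) every interior $(d-1)$-simplex disconnects $\T$. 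Two trade-offs are worth noting. First, your argument never produces a shelling, whereas the paper's does; the surrounding text relies on ``stacked implies shellable'' (e.g.\ in Lemma~\ref{coro:k=1-equa} and in the discussion preceding the lemma), and, correspondingly, your step (i)$\Rightarrow$(ii) must import the nonnegativity of the $h$-vector of an arbitrary, possibly non-shellable, triangulation (a Cohen--Macaulay-type fact), which the paper's detour through shellability makes automatic. Second, the appeal to the generalized Schoenflies theorem is avoidable: under (iv) the relative boundary of an interior $(d-1)$-simplex $F$ lies in $\partial[B]$, which forces $F=[B]\cap\operatorname{aff}(F)$, so $F$ is a hyperplane section of $[B]$ and separates it into two convex polytopes for elementary reasons; this also streamlines your induction, since the pieces obtained after each cut are again convex.
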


\begin{proof}
The equivalence of the first two properties follows from $\sum h_i=m$ and $h_0+h_1=|B| - d$.
For a shellable triangulation $\T$, the implications (ii)$\Rightarrow$(iii)$\Rightarrow$(iv)$\Rightarrow$(ii) are also trivial. Hence, the only thing we need to prove is that any  of (i), (iii) and (iv) implies $\T$ to be shellable. Let the simplices in $\T$ be ordered $S_1,\dots,S_m$ in such a way that $S_i$ shares at least one facet with $S_1\cup\dots\cup S_{i-1}$, which can always be done. Then:
\begin{description}
\item{(i)} $S_1\cup\dots\cup S_{i}$ has at most one vertex more than $S_1\cup\dots\cup S_{i-1}$. If the total number of vertices equals $|B|-d$ we need the number to always increase by one, which implies $(S_1\cup\dots\cup S_{i-1})$ intersects $S_i$ only in a facet.
\item{(iii)} If the dual graph is a tree, it has one less edge than vertices. Then, no $S_i$ has two facets in common with $S_1\cup\dots\cup S_{i-1}$. It may in principle have a facet plus some lower dimensional face $\sigma$, but this would imply the dual graph of the link of $\sigma$ in $S_1\cup\dots\cup S_{i}$ to become disconnected. Since at the end of the process all links have connected dual graphs, there has to be a $j>i$ such that $S_j$ also contains $\sigma$ and is glued to $S_1\cup\dots\cup S_{j-1}$ along at least two facets, a contradiction.
\item{(iv)} If every simplex of dimension at most $d-2$ of $\T$ is contained in $\partial [B]$, then every $(d-1)$-simplex in $\T$ disconnects $\T$. Hence the dual graph is a tree and, by the previous argument, $\T$ is shellable.
\end{description}
\end{proof}

We call a point set $B$ \emph{totally stackable} if all its triangulations are stacked. This poses heavy restrictions on the combinatorics of $B$, as we now see:

\begin{lemma}
\label{lemma:stackable}
Let $B\subset \R^d$ be a $d$-dimensional finite point set. The following conditions are equivalent:
\begin{description}
\item{(i)} $B$ is totally stackable.
\item{(ii)} Every $k$ points of $B$ lie in a face of $[B]$ of dimension at most $k$, for every $k$.
\item{(iii)} Every subset $C$ of at most $d-1$ points of $B$ has $[C]\subset \partial [B]$.
\end{description}
\end{lemma}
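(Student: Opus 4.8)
The plan is to establish the cycle of implications (ii)$\Rightarrow$(i)$\Rightarrow$(iii)$\Rightarrow$(ii). Two elementary observations will be used throughout. First, if $[C]\subseteq[B]$ is convex and $[C]\not\subseteq\partial[B]$, then in fact $\operatorname{relint}[C]\subseteq\operatorname{int}[B]$: since $[C]=\overline{\operatorname{relint}[C]}$, the hypothesis says $\operatorname{relint}[C]$ meets the open set $\operatorname{int}[B]$, and $\operatorname{relint}[C]$ is connected and is split by the clopen pair $\operatorname{relint}[C]\cap\operatorname{int}[B]$, $\operatorname{relint}[C]\cap\partial[B]$, so the first set must be everything. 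Second, if $F$ is the smallest face of $[B]$ containing a finite set $C$, then $\operatorname{relint}[C]\subseteq\operatorname{relint}(F)$, because any face of $[B]$ meeting $\operatorname{relint}[C]$ must contain all of $C$. Given these, (ii)$\Rightarrow$(i) is immediate from Lemma~\ref{lemma:stacked}(iv): a simplex $\sigma$ of a triangulation $\T$ of $B$ with $\dim\sigma\le d-2$ has at most $d-1$ vertices, which by (ii) lie in a face of $[B]$ of dimension $\le d-1$, necessarily proper; hence $\sigma\subseteq\partial[B]$, so $\T$ is stacked.

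For (i)$\Rightarrow$(iii) I argue contrapositively: assume (iii) fails, so $d\ge 2$ and some $C\subseteq B$ has $|C|\le d-1$ and $[C]\not\subseteq\partial[B]$. First I reduce to a convenient witness by choosing such a $C$ with $|B\cap[C]|$ minimal. Carath\'eodory's theorem then forces $C$ to be affinely independent (otherwise $[C]=\bigcup_{c\in C}[C\setminus\{c\}]$, and some $[C\setminus\{c\}]$ is still not contained in $\partial[B]$ and contains no more points of $B$), and a stellar‑flip argument forces $B\cap[C]=C$: if $b\in(B\cap[C])\setminus C$ lies in the relative interior of a face $[C']$ of the simplex $[C]$ and $c\in C'$, then $(C\setminus\{c\})\cup\{b\}$ spans $\operatorname{aff}(C)$, is affinely independent, omits $c$, and its hull is full‑dimensional in $\operatorname{aff}(C)$ — so its relative interior lies in $\operatorname{relint}[C]\subseteq\operatorname{int}[B]$ — contradicting minimality of $|B\cap[C]|$. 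Now $[C]$ is a simplex of dimension $\le d-2$, with $\operatorname{relint}[C]\subseteq\operatorname{int}[B]$ and $B\cap[C]=C$. The heart of the proof is to exhibit a triangulation $\T$ of $B$ having $[C]$ as a face: order $B$ so that the vertices of $C$ come first and take the corresponding placing triangulation. Such an order can be chosen so that every point is placed outside (or on the boundary of) the convex hull built so far — reversing the order, one repeatedly deletes a vertex of the current hull, which is always possible because, as long as more than $C$ remains, the condition $B\cap[C]=C$ forces the current hull to have a vertex outside $[C]$. Placing a point outside the current hull never removes a simplex from the complex, so $[C]$ — which is the entire complex once the vertices of $C$ have been placed — survives as a face of $\T$. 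Since $\dim[C]\le d-2$ and $[C]\not\subseteq\partial[B]$, Lemma~\ref{lemma:stacked}(iv) shows $\T$ is not stacked, i.e.\ (i) fails.

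Finally (iii)$\Rightarrow$(ii) is proved by induction on $d$ (the case $d=1$ being trivial, as every point set in a segment lies in the segment). The inductive step rests on a hereditary property: if $B$ satisfies (iii) and $G$ is a facet of $[B]$, then $G\cap B$ satisfies the analogue of (iii) inside $G$. Indeed, suppose $C'\subseteq G\cap B$ with $|C'|\le d-2$ and $[C']\not\subseteq\partial_G(G)$; pick $p\in[C']$ in the relative interior of $G$ and a vertex $v$ of $[B]$ with $v\notin G$ (such $v$ exists since $[B]$ is $d$-dimensional). Since $p$ is relatively interior to the facet $G$, near $p$ the body $[B]$ coincides with the supporting half‑space of $G$, into whose interior the segment $[p,v]$ enters immediately; hence $[C'\cup\{v\}]\not\subseteq\partial[B]$, and as $|C'\cup\{v\}|\le d-1$ this contradicts (iii) for $B$. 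Granting this, let $C\subseteq B$ with $|C|=k$: if $k\ge d$ take $F=[B]$; if $k\le d-1$, then (iii) gives $[C]\subseteq\partial[B]$, so the smallest face $F$ containing $C$ is proper and hence contained in a facet $G$, and applying the induction hypothesis to the $(d-1)$-dimensional set $G\cap B$ (which satisfies (iii) inside $G$) places $C$ in a face of $G$, hence of $[B]$, of dimension $\le k$. This closes the cycle, so (i), (ii), (iii) are equivalent. I expect the main obstacle to be the construction in (i)$\Rightarrow$(iii) — both the reduction to an empty simplex $[C]$ with $B\cap[C]=C$ and the verification that placing the vertices of $C$ first yields a full triangulation of $B$ with $[C]$ as a face; the hereditary property for facets is the remaining, milder, technical point.
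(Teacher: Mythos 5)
Your proof is correct, but it is organized around a different cycle of implications than the paper's, and it ends up doing somewhat more work. The paper proves (ii)$\Rightarrow$(iii)$\Rightarrow$(i) (the first step is trivial, the second immediate from Lemma~\ref{lemma:stacked}(iv)) and then closes the cycle with a single hard implication (i)$\Rightarrow$(ii): given $k$ points whose carrier $F$ has dimension greater than $k$, it builds a triangulation of $B\cap F$ using $[C]$ as a simplex, observes that the link of $[C]$ is a sphere of positive dimension so the dual graph acquires a cycle, and extends by placing. You instead prove (ii)$\Rightarrow$(i) directly, make (i)$\Rightarrow$(iii) the hard construction (reducing to an affinely independent $C$ with $B\cap[C]=C$ and $\operatorname{relint}[C]\subset{\rm int}[B]$, then realizing $[C]$ as a face of a placing triangulation ordered with $C$ first), and then prove (iii)$\Rightarrow$(ii) from scratch by induction on facets --- an implication the paper gets for free by routing through (i). The core idea of the hard step is the same in both proofs (exhibit a triangulation of $B$ having $[C]$ as a simplex through the interior), but you detect non-stackedness with criterion (iv) of Lemma~\ref{lemma:stacked} rather than with dual-graph cycles, and you supply the details of the ordering (reverse vertex deletion, which works because $B\cap [C]=C$ guarantees a deletable vertex outside $C$) that the paper dismisses with ``it is easy to show''. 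What your route buys is a self-contained, more explicit construction and a direct proof of (iii)$\Rightarrow$(ii); what it costs is the extra inductive argument and the extra reduction to an empty simplex.

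Three small points to tighten. First, minimizing only $|B\cap[C]|$ does not force affine independence, since $[C\setminus\{c\}]$ may contain exactly the same points of $B$; add $|C|$ as a secondary minimization (the stellar-flip step is fine as written, since there $c$ leaves the hull and $|B\cap[C]|$ strictly drops). Second, delete ``or on the boundary of'': a point lying on the boundary of the hull built so far cannot be added as a new vertex by placing; fortunately your reverse-deletion argument produces points genuinely outside the previous hull (a vertex of $[X]$ never lies in $[X\setminus\{v\}]$), so nothing is lost. Third, the ``clopen pair'' justification that $[C]\not\subset\partial[B]$ forces $\operatorname{relint}[C]\subset{\rm int}[B]$ is shakier than the standard argument (for $x\in{\rm int}[B]$ and $z\in\operatorname{relint}[C]$, extend the segment slightly beyond $z$ to $z'\in[C]$ and use $[x,z')\subset{\rm int}[B]$); state it that way. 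None of these affects the validity of the overall argument.
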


\begin{proof}
The implication $(ii)\Rightarrow(iii)$ is obvious, and $(iii)$ clearly implies the last property of  Lemma~\ref{lemma:stacked} for every triangulation, hence it implies $(i)$. So, we only need to show $(i)\Rightarrow(ii)$.

Let $C\subset B$ be a set of $k$ points and let $F$ be the minimal face of $[B]$ containing $C$
(the \emph{carrier} of $C$). Assume that $\dim(C) > k$ and, without loss of generality, that $C$ is affinely independent. It is easy to
show that $B_F:=B\cap F$ has a triangulation $\T_F$ using $C$ as a simplex. Since $[C]$ goes through the interior of $F$, the link of
$C$ in $\T_F$ is a $(\dim(C)-k)$--sphere. In particular, since $\dim(C)-k>0$, its dual graph has cycles. This $\T_F$ can be extended to a
triangulation of $B$ (for example via the placing procedure, see \cite[Section 4.3.1]{LRS10}) which will still have cycles in its dual graph.
\end{proof}

Properties (ii) and (iii) have the following straightforward consequences.
, which will be useful in order to give an explicit description of all possible totally stackable sets:
\begin{itemize}
\item If $B$ is totally stackable, every point of $B$ is either a vertex of $[B]$ or lies in the relative interior of an edge of $[B]$. That is,
$B$ is contained in the union of edges of $[B]$. We call the edges of $[B]$ that contain points of $B$ other than vertices \emph{loaded}.
\item Every subset $B'$ of a totally stackable set $B$ is totally stackable in $\operatorname{aff}(B')$.
\end{itemize}

Sets satisfying property (iii) of Lemma~\ref{lemma:stackable} are called \emph{of combinatorial degree one} by B. Nill, A. Padrol \cite{NiP}, who give a complete classification of them. The description uses iterated pyramids, which we define in terms of the join operator.
Let $B_1$ and $B_2$ be two finite sets in $\R^d$ whose affine hulls are of dimensions $d_1$ and $d_2$, respectively. We say that $B_1\cup B_2$ is a \emph{join} of $B_1$ and $B_2$ if the affine hull
of $B_1\cup B_2$ is of dimension $d_1+d_2+1$. For $i=1,2$, consider a triangulation for $B_i$ where the number of $d_i$-simplices is $m_i$. These two triangulations induce a triangulation for the join $B_1\cup B_2$ where the number of
$(d_1+d_2+1)$-simplices is $m_1m_2$. Moreover, all triangulations of a join arise in this way. The special case of a join where $B_2$ is a single point is called a \emph{pyramid}, and if $B_2$ is affinely independent it is an \emph{iterated pyramid} over $B_1$.

\begin{theo}[B.~Nill and A.~Padrol~\cite{NiP}]
\label{splitchar}
\label{theo:stackablechar}
Let $B$ be a finite set in $\R^d$ not contained in a hyperplane. Then $B$ is
totally stackable
  if and only if $B$ is contained in the union of the edges of $[B]$, and either of the following conditions holds.
\begin{description}
\item{(i)} $[B]$ is a simplex, and all loaded edges meet at a vertex.
\item{(ii)} $[B]$ is an iterated pyramid over a polygon, and every loaded edge  is a side of the polygon.
\item{(iii)} $[B]$ is an iterated pyramid over a  prism over a simplex,
and every loaded edge  is a vertical edge of the prism.
\end{description}
\end{theo}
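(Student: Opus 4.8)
The plan is to prove Theorem~\ref{theo:stackablechar} by combining the combinatorial characterization of totally stackable sets (Lemma~\ref{lemma:stackable}, property (iii)) with a classification of the polytopes $P=[B]$ that can \emph{carry} such a configuration. The key structural observation is that if $B$ is totally stackable then $B$ lies in the union of edges of $P$, so the data of $B$ is equivalent to the data of $P$ together with a choice of which edges are loaded and which finite subsets of those edges' relative interiors are used. So the theorem really has two halves: first, determine which combinatorial types of polytope $P$ admit \emph{any} legal loading (including the empty loading, which always exists once $P$ is a simplex, a pyramid over a polygon, etc.); second, determine for each such $P$ which edges may legally be loaded. I would prove the ``only if'' direction by extracting from property (iii) of Lemma~\ref{lemma:stackable} — every at most $(d-1)$-subset $C$ of $B$ has $[C]\subset\partial P$ — strong restrictions on $P$, and the ``if'' direction by directly verifying that each of (i), (ii), (iii) satisfies property (iii), or rather property (ii), of Lemma~\ref{lemma:stackable}.

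For the ``only if'' direction I would argue as follows. Since every subset of a totally stackable set is totally stackable in its affine hull, and since (Lemma~\ref{lemma:stackable}(ii)) every $k$ points of $B$ span a face of dimension at most $k$, in particular the vertex set $V$ of $P$ is itself totally stackable; so it suffices first to classify the polytopes $P$ whose vertex sets are totally stackable and then handle the loaded edges. For the vertex set: take any two vertices $u,v$ not on a common edge; then $[u,v]\not\subset\partial P$ would be forced unless $d\le 2$, so for $d\ge 3$ the graph of $P$ must be complete, i.e. $P$ is a simplicial polytope in which every pair of vertices forms an edge — and by a standard fact such polytopes are exactly simplices (for $d\ge 3$), unless we allow the extra freedom that comes from $P$ being an iterated pyramid, in which case we may peel off apex vertices and recurse on the base. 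This peeling is the crux: an apex vertex lies on an edge to every other vertex, so it never obstructs property (iii), and removing it replaces $P$ by its base and $d$ by $d-1$. After removing all pyramid apices we are left with a polytope $Q$ that is \emph{not} a pyramid and whose vertex set must satisfy: every two non-adjacent vertices $u,v$ have $[u,v]\subset\partial Q$, which when $\dim Q\ge 3$ again forces $Q$ to be a simplex (excluded, since a simplex is a pyramid) — hence $\dim Q\le 2$, giving a polygon (case (ii)), OR $\dim Q\ge 3$ but with a very special structure. To capture the prism-over-a-simplex case (iii) one notes that a prism over a simplex is \emph{not} a pyramid yet its vertex set is totally stackable: its only non-edges are the diagonals of the quadrilateral 2-faces, and each such diagonal lies in a quadrilateral facet when $d\ge 3$... wait, one must check this carefully — in a prism over a $(d-1)$-simplex the ``vertical'' quadrilateral 2-faces are genuine faces, and each non-edge diagonal lies in one of them, which is a face of dimension $2\le d-1$, so property (ii) of Lemma~\ref{lemma:stackable} holds. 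The claim to be proven, and the genuine content of the Nill–Padrol theorem, is that polygons, simplices, and prisms over simplices (and their iterated pyramids) are the \emph{only} polytopes with this property; this is where one invokes~\cite{NiP} or reproduces their argument, e.g. via an inductive analysis of a vertex figure or of a facet.

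For the loaded edges in the ``only if'' direction: suppose $e=[u,v]$ is a loaded edge, with an interior point $x\in B$. Then for any other vertex $w$ not on $e$, the triple $\{x,u,w\}$ (or just $\{x,w\}$) must span a face of dimension $\le 2$ (resp. $\le 1$); since $x$ is in the relative interior of $e$, the carrier of $\{x,w\}$ is the carrier of $\{u,v,w\}$, forcing $u,v,w$ to lie in a common 2-face. In the simplex case this means all loaded edges pass through a common vertex (else two loaded edges $[v_0,v_1]$, $[v_2,v_3]$ on disjoint vertex sets would force $v_0,v_1,v_2,v_3$ into a 2-face, impossible in a simplex of dimension $\ge 3$). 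In the polygon-pyramid case, a loaded edge must be a side of the polygon (an apex-to-polygon edge $[a,v_i]$, if loaded with interior point $x$, forces $\{x,v_j\}$'s carrier — which is the carrier of $\{a,v_i,v_j\}$, a triangular 2-face — but then for a fourth vertex $v_k$ we get $\{a,v_i,v_j,v_k\}$ in a 2-face, contradiction once the polygon has $\ge 3$ vertices other than... this needs the polygon to have $\ge 4$ sides, and the square/triangle base cases must be checked directly). Similarly in the prism case one shows a loaded edge must be vertical. I would organize this as: loaded edges of $P$ are constrained by forcing the two endpoints plus one more vertex into a small face, and then check that exactly the edges allowed in (i)–(iii) survive.

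The main obstacle I anticipate is the polytope classification at the heart of it — showing that the only $d$-polytopes ($d\ge 3$) that are not pyramids and whose vertex sets are totally stackable are simplices and prisms over simplices. The peeling-off-apices reduction and the ``complete graph $\Rightarrow$ simplex'' step for simplicial polytopes are clean, but ruling out all non-simplicial, non-pyramidal, non-prism-over-simplex polytopes requires real work — this is precisely the Nill–Padrol classification of polytopes of combinatorial degree one, and I would simply cite Theorem~\ref{theo:stackablechar}'s source~\cite{NiP} for that portion rather than reprove it, keeping the original contribution to the (routine but necessary) verifications that the three listed families are totally stackable and that the edge-loading conditions are exactly as stated. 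In other words, the honest statement of the proof is: the polytope classification is \cite{NiP}; what remains is to translate ``combinatorial degree one'' into the edge-loading language via Lemma~\ref{lemma:stackable}, which is the bookkeeping I would carry out in full.
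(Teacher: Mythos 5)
The paper offers no proof of this theorem: it is imported verbatim from Nill--Padrol \cite{NiP}, whose classification of point configurations ``of combinatorial degree one'' (i.e., satisfying Lemma~\ref{lemma:stackable}(iii)) is exactly the content being quoted. Your bottom line --- cite \cite{NiP} for the classification of the admissible polytopes $[B]$, and carry out the translation into the edge-loading language yourself --- therefore coincides with what the paper actually does, and the parts you do work out (sufficiency of (i)--(iii) via Lemma~\ref{lemma:stackable}(ii), and the carrier arguments constraining which edges may be loaded, e.g.\ interior points $x\in[v_0,v_1]$, $y\in[v_2,v_3]$ forcing $v_0,v_1,v_2,v_3$ into a $2$-face) are sound bookkeeping.

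One intermediate claim in your ``only if'' sketch is false and should not survive into a written version, even though you ultimately bypass it by citing \cite{NiP}. You assert that for $d\ge 3$ two non-adjacent vertices would violate property (iii), so the graph of $P$ must be complete, and that a $d$-polytope with complete graph is a simplex. Both halves fail: property (iii) only requires $[u,v]\subset\partial[B]$, which a diagonal of a quadrilateral $2$-face satisfies perfectly well (as you yourself notice for the prism), so non-adjacent vertex pairs are not excluded; and for $d\ge 4$ there are $2$-neighborly polytopes (e.g.\ cyclic polytopes) with complete graphs that are far from simplices. The correct dividing line is Lemma~\ref{lemma:stackable}(ii) applied to triples and larger subsets --- for instance, in $C(6,4)$ the triple $\{1,3,5\}$ lies in no proper face, which is what rules such polytopes out --- and isolating exactly which polytopes survive this test is the genuinely hard content of \cite{NiP}. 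So long as your write-up leans on the citation for that step rather than on the ``complete graph $\Rightarrow$ simplex'' shortcut, the proposal is acceptable.
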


Observe that $[B]$ can be a simplex also in case (ii).

\section{A proof of Theorem~\ref{AkB} and some consequences for critical pairs}
\label{seccomb}

In this section, we review the proof of Theorem~\ref{AkB}
from  M. Matolcsi and I.Z. Ruzsa \cite{MaR10} in order to analyze the equality case. This will lead to a technical but useful characterization of critical pairs (Theorem~\ref{AkBequa}),
a strengthening of the Matolcsi--Ruzsa inequality (Theorem~\ref{theo:h-vector}), and 
various fundamental properties of critical pairs (Theorem~\ref{equacond}). 

Recall that $kM=M+\dots +M$ denotes the $k$-fold Minkowski sum of $M$ with itself and $k\cdot M=\{kx: x\in M\}$ denotes the dilation of $M$ by a factor of $k$.
We note that if $M$ is a convex set in $\R^d$, and $k\geq 1$ is an integer, then
\begin{equation}
\label{kM}
kM=\{kx:\,x\in M\}=k\cdot M.
\end{equation}

The following simple observation will be often used.

\begin{claim}
\label{Matolcsi-Ruzsa-1}
Let $S=[C]$ be a $d$-simplex for $C= \{v_0,\ldots,v_d\}$.
\begin{description}
\item{ (i)} $|kC|={d+k \choose k}$.
\item{(ii)} For distinct points $a,b\in S$ and $k\ge 1$ we have
$$
(a+kC)\cap (b+kC)=\emptyset,
$$
unless both $a,b\in C$.
\end{description}
\end{claim}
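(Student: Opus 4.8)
The plan is to prove the two parts of Claim~\ref{Matolcsi-Ruzsa-1} using the identification of Minkowski sums of a simplex with its dilates, via \eqref{kM}, together with a barycentric-coordinate bookkeeping argument. For part (i), I would first note that $[C]=\{\sum_{i=0}^d\lambda_i v_i:\lambda_i\ge 0,\ \sum\lambda_i=1\}$, so by \eqref{kM} the dilate $k\cdot[C]$ consists of all $\sum_i\mu_i v_i$ with $\mu_i\ge 0$ and $\sum_i\mu_i=k$. The sumset $kC$ is the set of points $v_{i_1}+\dots+v_{i_k}$ with $i_1,\dots,i_k\in\{0,\dots,d\}$; writing such a point as $\sum_i\mu_i v_i$, the coefficient vector $(\mu_0,\dots,\mu_d)$ is exactly an element of $\N^{d+1}$ with coordinate sum $k$. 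Since $C$ is affinely independent, distinct coefficient vectors give distinct points, so $|kC|$ equals the number of such vectors, which is the number of weak compositions of $k$ into $d+1$ parts, namely $\binom{d+k}{k}$. (One can also phrase this as: $kC$ is precisely the set of lattice points of $k\cdot[C]$ in the affine lattice generated by $C$, and counting these is the standard stars-and-bars computation.)

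For part (ii), suppose $a=\sum_i\alpha_i v_i$ and $b=\sum_i\beta_i v_i$ with $\alpha_i,\beta_i\ge 0$ and $\sum_i\alpha_i=\sum_i\beta_i=1$, and suppose there is a common point $p\in(a+kC)\cap(b+kC)$. Then $p-a=\sum_i\mu_i v_i$ and $p-b=\sum_i\nu_i v_i$ where $(\mu_i),(\nu_i)\in\N^{d+1}$ each sum to $k$; hence $p=\sum_i(\alpha_i+\mu_i)v_i=\sum_i(\beta_i+\nu_i)v_i$, and again affine independence forces $\alpha_i+\mu_i=\beta_i+\nu_i$ for every $i$. Thus $\alpha_i-\beta_i=\nu_i-\mu_i\in\Z$ for all $i$. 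Since $0\le\alpha_i,\beta_i\le 1$, this means $\alpha_i-\beta_i\in\{-1,0,1\}$; and because $\sum_i(\alpha_i-\beta_i)=0$, the nonzero differences come in matching $+1$/$-1$ pairs. If any difference is nonzero, say $\alpha_i-\beta_i=1$, then $\alpha_i=1$ and $\beta_i=0$, which combined with $\sum\alpha_j=1$ and $\alpha_j\ge 0$ forces $\alpha_j=0$ for $j\ne i$, so $a=v_i\in C$; symmetrically the index $i'$ with $\alpha_{i'}-\beta_{i'}=-1$ gives $b=v_{i'}\in C$. Repeating this for all nonzero coordinates, either all differences are zero — whence $a=b$, contradicting $a\ne b$ — or every nonzero coordinate of $\alpha-\beta$ is $\pm 1$ isolated as above, which can only happen if $a$ and $b$ are each a single vertex, i.e. $a,b\in C$. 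This is exactly the asserted dichotomy.

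The main obstacle, such as it is, is purely organizational: making sure the case analysis on the integer vector $\alpha-\beta$ is airtight, in particular ruling out configurations where some coordinate difference is $\pm1$ but $a$ is not a vertex. The key point that closes this is that $\alpha_i=1$ together with the simplex constraint immediately pins $a$ down to $v_i$, so there is no room for a ``partial'' collision; I would state this cleanly as a small observation rather than belabor it. No step requires genuinely new ideas beyond the affine-independence uniqueness of barycentric coordinates and the normalization $\sum\lambda_i=1$, both of which are elementary.
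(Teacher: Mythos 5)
Your proof is correct and follows essentially the same route as the paper's: both parts reduce to the observation that the coefficient vectors of $a$ and $b$ (barycentric in your version, Cartesian in the paper's after placing $v_0$ at the origin and taking $v_1,\dots,v_d$ as a basis) differ by an integer vector, which for points of a simplex forces $a=b$ unless both are vertices. The paper packages the non-vertex case via disjointness of the half-open cubes $x+[0,1)^d$ for distinct lattice points $x$, whereas you carry out the case analysis on $\alpha-\beta\in\{-1,0,1\}^{d+1}$ directly, but this is only a difference in bookkeeping.
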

\begin{proof} We may assume that $v_0$ is the origin, and $v_1,\ldots,v_d$ form the orthonormal basis.
In this case we have
$$
kC=\left\{(t_1,\ldots,t_d)\in\N^d:\,\sum_{i=0}^dt_i\leq k\right\},
$$
and hence (i) follows by enumeration.

For (ii), we observe that, for each pair $x,y\in kC$ of distinct points, the sets $x+[0,1)^d$ and $y+[0,1)^d$ are disjoint. Since $S\backslash\{v_0,\ldots,v_d\}\subset[0,1)^d$,  it follows from $(a+kC)\cap (b+kC)\neq\emptyset$ that either $a$ or $b$
is a vertex, say $a$ is a vertex of $S$. Then $x+a\in\Z^d$, thus $b$ is a vertex of $S$ as well.
\end{proof}

\begin{coro}
\label{coro:Bsimplex}
If $C= \{v_0,\ldots,v_d\}$ is the vertex set of a simplex and if  $A\subset[C]$, then
\[
|A + kC| = {d+k \choose k}|A|  - \sum_{i=1}^{|A\cap C|} \left( {d+k \choose k}  - {d+k+1-i \choose k} \right).
\]
In particular,
\[
|A + kC| \ge
{d+k \choose k}|A| - k{d+k \choose k+1},
\]
with equality if and only if $C\subset A$.
\end{coro}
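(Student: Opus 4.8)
The plan is to prove Corollary~\ref{coro:Bsimplex} directly from Claim~\ref{Matolcsi-Ruzsa-1}, by writing $A+kC$ as a union of translates $a+kC$ over $a\in A$ and carefully accounting for overlaps. First I would fix an enumeration $a_1,\dots,a_n$ of $A$ in which the points of $A\cap C$ come last, say $a_{n-r+1},\dots,a_n$ where $r=|A\cap C|$, and compute $|A+kC|$ as $\sum_{i=1}^n |(a_i+kC)\setminus \big((a_1+kC)\cup\dots\cup(a_{i-1}+kC)\big)|$. By Claim~\ref{Matolcsi-Ruzsa-1}(ii), for $a_i\notin C$ the translate $a_i+kC$ is disjoint from every other $a_j+kC$, so such a term contributes exactly $|kC|={d+k\choose k}$ by Claim~\ref{Matolcsi-Ruzsa-1}(i). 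This handles the first $n-r$ terms.

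The remaining work is to evaluate the contribution of the $j$-th vertex of $A\cap C$ that gets added. Here I would use the structure from the proof of Claim~\ref{Matolcsi-Ruzsa-1}: taking $v_0$ as the origin and $v_1,\dots,v_d$ as the standard basis, $kC=\{t\in\N^d:\sum t_i\le k\}$, and for a vertex $v$ we have $v+kC\cap(w+kC)$ consisting precisely of the lattice points that lie in both dilated simplices. Concretely, when we add the $i$-th vertex (for $i$ running from $1$ to $r$), its translate $v+kC$ meets the union of the $i-1$ previously-placed vertex-translates in a copy of $(k-1)$ or smaller dilations; the clean way to see the count is that $|\,(v+kC)\cap\bigcup_{\text{earlier }w+kC}\,|$ equals ${d+k\choose k}-{d+k+1-i\choose k}$. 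I would verify this by the inclusion–exclusion / geometric argument that the union of $i$ vertex-translates of $kC$ within $(k+i-1)\cdot C$ (suitably positioned) has a predictable lattice-point count, or more simply by induction on $i$: adding the $i$-th vertex $v_j$, the overlap with the previous union is exactly $v_j+k C$ intersected with the convex hull of the previous vertices scaled up, which is a lattice simplex contributing ${d+k\choose k}-{d+k+1-i\choose k}$ new points removed. Summing the telescoping contributions over $i=1,\dots,r$ yields the stated formula $|A+kC|={d+k\choose k}|A|-\sum_{i=1}^{r}\big({d+k\choose k}-{d+k+1-i\choose k}\big)$.

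For the inequality, I would note that each summand ${d+k\choose k}-{d+k+1-i\choose k}$ is nonnegative and that $\sum_{i=1}^{r}\big({d+k\choose k}-{d+k+1-i\choose k}\big)$ is maximized when $r=d+1$, i.e.\ when $C\subset A$, in which case a binomial identity (the hockey-stick identity, $\sum_{i=1}^{d+1}{d+k+1-i\choose k}={d+k+1\choose k+1}-\binom{k}{k}$ rearranged) gives exactly $(d+1){d+k\choose k}-\big({d+k+1\choose k+1}-1\big)$; comparing with $k{d+k\choose k+1}$ via Pascal's rule shows these agree. Hence $|A+kC|\ge {d+k\choose k}|A|-k{d+k\choose k+1}$ with equality iff the sum attains its maximum, i.e.\ iff $r=|A\cap C|=d+1$, i.e.\ $C\subset A$. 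The main obstacle I anticipate is getting the overlap count ${d+k\choose k}-{d+k+1-i\choose k}$ right when the $i$-th vertex is added — this requires being careful that the relevant intersection is a full lower-dimensional lattice simplex and not something smaller, which is exactly where Claim~\ref{Matolcsi-Ruzsa-1}(ii) and the explicit coordinate description of $kC$ do the heavy lifting; the binomial bookkeeping afterward is routine.
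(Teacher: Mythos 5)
Your proposal is correct and follows essentially the same route as the paper: decompose $A+kC$ into the translates $a+kC$, use Claim~\ref{Matolcsi-Ruzsa-1} to see that only the vertex translates can overlap, and count the new lattice points contributed by each successively added vertex in the coordinate model $kC=\{t\in\N^d:\sum t_i\le k\}$, which is exactly the paper's computation of $|(C_l+kC)\setminus(C_{l-1}+kC)|=\binom{d+k+1-l}{k}$. One small arithmetic slip in your final bookkeeping: the hockey-stick identity gives $\sum_{i=1}^{d+1}\binom{d+k+1-i}{k}=\sum_{j=k}^{d+k}\binom{j}{k}=\binom{d+k+1}{k+1}$ with no extra $-\binom{k}{k}$ term, and then $(d+1)\binom{d+k}{k}-\binom{d+k+1}{k+1}=k\binom{d+k}{k+1}$ follows from Pascal's rule as you intend (with your extra $+1$ the two sides would fail to agree).
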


\begin{proof}
Clearly,
\[
A + kC=\bigcup_{a\in A} (a+kC).
\]
By Claim~\ref{Matolcsi-Ruzsa-1}(i) each of the sets $a+kC$ has cardinality ${d+k \choose k}$ and, by Claim~\ref{Matolcsi-Ruzsa-1}(ii), they are pairwise disjoint except when $a,a'\in C\cap A$. That is:
\[
|A + kC|={d+k \choose k}|A \setminus C|  + |(A\cap C) + kC|.
\]
To prove (i) we only need to check that
\[
|(A\cap C) + kC| = \sum_{i=1}^{|A\cap C|} {d+k+1-i \choose k}.
\]
For this assume, as in the proof of Claim~\ref{Matolcsi-Ruzsa-1}(i), that
$v_0$ is the origin, and $v_1,\ldots,v_d$ form an orthonormal basis. Let $C_{l}:=\{v_0,v_1,\dots,v_{l-1}\}$, $1\le l\le d+1$,   and assume that $A\cap C=C_t$ for some $t$. Observe that
\[
C_1 +kC=kC,
\]
and, for each  $l=2,\dots,d+1$,
\[
(C_{l}+kC) \setminus (C_{l-1}+kC)=\left\{(t_1,\ldots,t_d)\in\N^d:\,
  \begin{array}{l}
    \sum_{i=0}^dt_i = k+1  \\ t_1=\dots=t_{l-2}=0 \\  t_{l-1}>0
  \end{array}
\right\}.
\]
Hence,
\[
|(C_{l}+kC) \setminus (C_{l-1}+kC)|={d+k+1-l \choose k},
\]
and
\[
|(A\cap C) + kC| = |C_t + kC| = \sum_{i=1}^{|A\cap C|}{d+k+1-i \choose k}.
\]

For the second part of the statement, observe that each summand in $\sum_{i=1}^{|A\cap C|} \left( {d+k \choose k}  - {d+k+1-i \choose k} \right)$ is non-negative, so
\[
 {d+k \choose k}|A| - |A + kC|  \le
 \sum_{i=1}^{d+1} \left( {d+k \choose k}  - {d+k+1-i \choose k} \right) =
 k{d+k \choose k+1},
\]
with equality if and only if $|A\cap C|=d+1$.
\end{proof}

\begin{proof}[\bf Proof of Theorem~\ref{AkB}]
Let $S_1,\ldots,S_m$ be a shelling of a triangulation $\T$ of $B$.
Let $C_i$ be the set of vertices $S_i$.
According to (\ref{kM}),
\begin{equation}
\label{k+1S}
\mbox{ \ $(k+1)S_i$, $i=1,\ldots,m$, form a triangulation of $(k+1)[B]$.}
\end{equation}

We define
\begin{eqnarray*}
A_1&=&A\cap S_1\\
A_i&=&A\cap \left(S_i\backslash (S_1\cup\ldots\cup S_{i-1})\right)
\mbox{ \ for $i=2,\ldots,m$}.
\end{eqnarray*}
We observe that $A_1,\ldots,A_m$ form a partition of $A$. Moreover, by shellability,
\begin{equation}
\label{Ai}
\mbox{$A_i\subset S_i$, and $A_i$ contains at most one vertex
of $S_i$ for $i=2,\ldots,m$.}
\end{equation}
We deduce from \eqref{TiSi} that
\begin{equation}
\label{AkBinequa}
|A+kB|\geq \sum_{i=1}^m|A_i+kC_i|.
\end{equation}

Now Corollary~\ref{coro:Bsimplex} yields that
\begin{equation}
\label{A1B1inequa}
|A_1+kC_1|\ge{d+k \choose k}|A_1| - k{d+k\choose k+1}
\end{equation}
(with equality if and only if $C_1\subset A_1$),
and Claim~\ref{Matolcsi-Ruzsa-1} (i) and (ii) imply by \eqref{Ai} that
\begin{equation}
\label{AkB1equa}
|A_i+kC_i|=\sum_{a\in A_i} |a+kC_i|={d+k \choose k}|A_i|
\mbox{ \  for $i=2,\ldots,m$.}
\end{equation}

Theorem~\ref{AkB} follows from combining \eqref{AkBinequa}, \eqref{A1B1inequa}  and \eqref{AkB1equa}.
\end{proof}

Using the notation of the above proof, the following characterization of equality in Theorem~\ref{AkB} follows from \eqref{TiSi} and \eqref{k+1S} on the one hand, and
\eqref{AkBinequa}, \eqref{A1B1inequa} and \eqref{AkB1equa} on the other hand.

\begin{theo}
\label{AkBequa}
Let $A, B\subset \R^d$ finite such that $A \subset [B]$ and ${\rm dim}[B]=d$, and let $k\ge 1$. The pair $(A,B)$ is $k$--critical if and only if for some shelling $S_1,\ldots,S_m$ of an arbitrary triangulation $\T$ of $B$, we have
\begin{description}
\item{(i)} $C_1\subset A$;
\item{(ii)} $A_i+kC_i=(A+kB)\cap (k+1)T_i$ for $i=1,\ldots,m$
\end{description}
where $C_i$ denotes the set of vertices $S_i$, 
$T_1=S_1$ and $T_i=S_i\backslash (S_1\cup\ldots\cup S_{i-1})$, $i\ge 2$, and
$A_i=A\cap T_i$ for $i=1,\ldots,m$.
\end{theo}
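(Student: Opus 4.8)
The proof of Theorem~\ref{AkBequa} is essentially an exercise in tracking the inequalities in the proof of Theorem~\ref{AkB} and recording exactly when each is an equality. I would phrase this as follows.

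First, recall that the chain of inequalities proving Theorem~\ref{AkB} is
\[
|A+kB| \ge \sum_{i=1}^m |A_i+kC_i| \ge {d+k\choose k}|A| - k{d+k\choose k+1},
\]
where the first step is \eqref{AkBinequa} and the second combines \eqref{A1B1inequa} with \eqref{AkB1equa}. By \eqref{AkB1equa} the contribution of each $S_i$ with $i\ge 2$ is \emph{exactly} ${d+k\choose k}|A_i|$, with no slack; the only slack in the second step sits in \eqref{A1B1inequa}, and by Corollary~\ref{coro:Bsimplex} that is an equality precisely when $C_1\subset A_1$, equivalently $C_1\subset A$ (note $C_1\subset S_1$ so $A\cap S_1 = A_1$ contains $C_1$ iff $A$ does). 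This establishes that, for a fixed shelling, $k$--criticality forces (i) and forces equality in \eqref{AkBinequa}. Conversely, if (i) holds then \eqref{A1B1inequa} is an equality, and if in addition \eqref{AkBinequa} is an equality then the whole chain collapses and $(A,B)$ is $k$--critical. So the content reduces to: equality in \eqref{AkBinequa} is equivalent to condition (ii).

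Next I would unwind \eqref{AkBinequa}. Its proof rests on \eqref{TiSi}, which says $T_i+kS_i=(k+1)T_i$, together with \eqref{k+1S}, which says the dilated simplices $(k+1)S_i$ triangulate $(k+1)[B]$, hence the sets $(k+1)T_i$ partition $(k+1)[B]$. Since $A_i\subset T_i$ and $kC_i\subset kS_i$, we have $A_i+kC_i\subset T_i+kS_i=(k+1)T_i$, and therefore the sets $A_i+kC_i$ lie in pairwise disjoint regions. On the other hand $A+kB\subset (k+1)[B]$ because $A\subset[B]$ and $kB\subset k[B]$, and each element of $A+kB$ lies in $(k+1)[B]=\bigsqcup_i (k+1)T_i$, so it lies in exactly one $(k+1)T_i$. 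Thus
\[
|A+kB| = \sum_{i=1}^m \bigl| (A+kB)\cap (k+1)T_i \bigr| \ge \sum_{i=1}^m |A_i+kC_i|,
\]
where the last inequality holds termwise because, writing any element of $A+kB$ as $a+\sum_{j=1}^k b_j$ with $a\in A$, $b_j\in B$, one checks that whenever $a+\sum b_j\in (k+1)T_i$ one can in fact take $a\in A_i$ and all $b_j$ among the vertices $C_i$ of $S_i$ — this is exactly the argument behind \eqref{TiSi}, using \eqref{relint} to see that the relevant relative-open part of $(k+1)S_i$ only receives contributions from $S_i$ itself. Consequently \eqref{AkBinequa} is an equality if and only if each term is, i.e.\ $(A+kB)\cap(k+1)T_i = A_i+kC_i$ for all $i$, which is condition (ii).

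Finally, assembling the two directions gives the theorem: $(A,B)$ is $k$--critical $\iff$ (i) holds and \eqref{AkBinequa} is an equality $\iff$ (i) and (ii). The one point that needs a little care — and which I expect to be the main (though modest) obstacle — is the termwise comparison $|(A+kB)\cap(k+1)T_i| \ge |A_i+kC_i|$ and the verification that equality in the sum forces equality in each term; this requires being precise that $A_i + kC_i \subseteq (k+1)T_i$ for \emph{all} $i$ (including $i=1$, where $T_1 = S_1$ and $(k+1)T_1 = (k+1)S_1$), and that the regions $(k+1)T_i$ genuinely partition $(k+1)[B]$, which is where shellability and \eqref{k+1S} enter. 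Everything else is bookkeeping already carried out in the proof of Theorem~\ref{AkB}.
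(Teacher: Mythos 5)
Your proposal is correct and follows exactly the route the paper intends: the paper gives no separate proof of Theorem~\ref{AkBequa}, stating only that it ``follows from \eqref{TiSi} and \eqref{k+1S} on the one hand, and \eqref{AkBinequa}, \eqref{A1B1inequa} and \eqref{AkB1equa} on the other hand,'' which is precisely the equality-tracking you carry out (all slack sits in \eqref{A1B1inequa}, whose equality case is (i) by Corollary~\ref{coro:Bsimplex}, and in \eqref{AkBinequa}, whose equality case is (ii) because the sets $(k+1)T_i$ partition $(k+1)[B]$ and $A_i+kC_i\subseteq (A+kB)\cap (k+1)T_i$). One sentence of yours is stated backwards, though it does no harm: you justify the termwise inequality $|(A+kB)\cap(k+1)T_i|\ge|A_i+kC_i|$ by claiming that every element of $(A+kB)\cap(k+1)T_i$ can be rewritten with $a\in A_i$ and $b_j\in C_i$ --- that is the reverse inclusion, which is exactly condition (ii) and is false in general; the correct (and sufficient) justification is the one you give in the surrounding sentences, namely $A_i+kC_i\subseteq T_i+kS_i=(k+1)T_i$ together with the trivial $A_i+kC_i\subseteq A+kB$.
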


We will also use the following consequence of the proof of Theorem~\ref{AkB}.

\begin{lemma}
\label{lemAkBinside} 
Suppose that $A+x\subset{\rm int}[B]$ for some $x\in\R^d$. Then
\begin{equation}
\label{AkBinside}
|A+kB|\geq {d+k \choose k}|A|.
\end{equation}
\end{lemma}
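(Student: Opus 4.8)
The plan is to mimic the proof of Theorem~\ref{AkB} but exploit the fact that a translate of $A$ lies strictly inside $[B]$, so that the "boundary simplex" $S_1$ never contributes a vertex of $[B]$ to $A$. Since $A+kB$ and $(A+x)+kB = (A+kB)+kx$ have the same cardinality, we may replace $A$ by $A+x$ and assume outright that $A\subset\operatorname{int}[B]$. Fix any triangulation $\T$ of $B$ with shelling $S_1,\dots,S_m$, let $C_i$ be the vertex set of $S_i$, and set $T_1=S_1$, $T_i=S_i\setminus(S_1\cup\dots\cup S_{i-1})$ for $i\ge 2$, and $A_i=A\cap T_i$; as in the proof of Theorem~\ref{AkB} the sets $A_1,\dots,A_m$ partition $A$, and by \eqref{TiSi} together with \eqref{k+1S} we get $|A+kB|\ge\sum_{i=1}^m|A_i+kC_i|$.

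The key improvement over Theorem~\ref{AkB} is the bound on the first term. For $i\ge 2$ the estimate $|A_i+kC_i|={d+k\choose k}|A_i|$ from \eqref{AkB1equa} is already an equality and loses nothing. For $i=1$, the proof of Theorem~\ref{AkB} used the weak bound \eqref{A1B1inequa}, namely $|A_1+kC_1|\ge{d+k\choose k}|A_1|-k{d+k\choose k+1}$, and the deficit $k{d+k\choose k+1}$ is exactly what we must now kill. By Corollary~\ref{coro:Bsimplex} applied to the simplex $S_1=[C_1]$ and the set $A_1\subset[C_1]$,
\[
|A_1+kC_1|={d+k\choose k}|A_1|-\sum_{i=1}^{|A_1\cap C_1|}\left({d+k\choose k}-{d+k+1-i\choose k}\right),
\]
so it suffices to show $A_1\cap C_1=\emptyset$, which makes the subtracted sum empty and gives $|A_1+kC_1|={d+k\choose k}|A_1|$. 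But every vertex of $S_1$ is a vertex of $[B]$ (the first simplex of a shelling has all $d+1$ facets on the boundary, hence all its vertices are vertices of $[B]$ — alternatively just pick the shelling/triangulation to be a placing triangulation built from an initial simplex all of whose vertices are vertices of $[B]$), and $A\subset\operatorname{int}[B]$ contains no vertex of $[B]$; therefore $A_1\cap C_1\subset A\cap\operatorname{vert}[B]=\emptyset$. Summing, $|A+kB|\ge\sum_{i=1}^m{d+k\choose k}|A_i|={d+k\choose k}|A|$, which is \eqref{AkBinside}.

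The only point that needs a little care — and the main (minor) obstacle — is justifying that we may take the first simplex $C_1$ of the shelling to consist of vertices of $[B]$, so that $A_1\cap C_1$ is empty. This is immediate if one recalls from Section~\ref{sectriangs} that the placing procedure can start from any initial simplex $S=[x_1,\dots,x_{d+1}]$ with all $x_j\in B$ and $S\cap(B\setminus\{x_1,\dots,x_{d+1}\})=\emptyset$, producing a shellable triangulation in which $S_1=S$; choosing the $x_j$ to be affinely independent vertices of $[B]$ (possible since $B$ is $d$-dimensional) does the job. Alternatively, in any shelling the subcomplex $S_1$ is a ball whose boundary consists of $d+1$ facets all lying on $\partial[B]$, forcing $\operatorname{vert}(S_1)\subset\operatorname{vert}[B]$. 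Either way the rest is the bookkeeping above, inherited verbatim from the proof of Theorem~\ref{AkB}.
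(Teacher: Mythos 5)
Your overall strategy is the same as the paper's: translate $A$ into ${\rm int}[B]$ and rerun the proof of Theorem~\ref{AkB}, noting that the only lossy step is the first simplex of the shelling, whose deficit $k{d+k\choose k+1}$ disappears once $A_1\cap C_1$ is empty. The gap is in how you establish $A_1\cap C_1=\emptyset$: both justifications you offer for $C_1\subset{\rm vert}[B]$ are incorrect. First, one cannot in general start a placing triangulation from a simplex spanned by $d+1$ affinely independent vertices of $[B]$, because the procedure requires the initial simplex to contain no other point of $B$, and $B$ may have non-vertex points lying inside every such simplex (take $B$ to be the four vertices of a square together with its center: each triangle on three vertices of the square contains the center). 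Second, it is not true that the first simplex of an arbitrary shelling has all its facets on $\partial[B]$, hence all its vertices in ${\rm vert}[B]$: in the same example, the fan triangulation into four triangles with apex at the center is shellable starting from any of its triangles, whose vertex set contains the center and which has only one edge on $\partial[B]$. So $C_1$ may meet ${\rm int}[B]$, and since $A\subset{\rm int}[B]$ does not prevent $A$ from containing such interior points of $B$, your inclusion $A_1\cap C_1\subset A\cap{\rm vert}[B]$ can fail; with two such points in $A_1\cap C_1$, Corollary~\ref{coro:Bsimplex} genuinely produces a positive deficit for that triangulation.

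The repair is the one the paper uses and is simpler than what you attempt: since ${\rm int}[B]$ is open and $A$ and $B$ are finite, the translation vector can be perturbed so that, in addition to $A+x\subset{\rm int}[B]$, one has $(A+x)\cap B=\emptyset$. Replacing $A$ by this translate, $A_1\cap C_1\subset A\cap B=\emptyset$ holds for \emph{every} shellable triangulation, Corollary~\ref{coro:Bsimplex} gives $|A_1+kC_1|={d+k\choose k}|A_1|$ with no deficit, and the rest of your bookkeeping goes through verbatim. (One could also observe that the first term of the sum in Corollary~\ref{coro:Bsimplex} vanishes, so $|A_1\cap C_1|\le 1$ would already suffice; but your argument does not guarantee that either.)
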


\begin{proof} If  $A+x\subset{\rm int}[B]$ for some $x\in\R^d$, then $A\cap B=\emptyset$ can be assumed, and hence Corollary~\ref{coro:Bsimplex} gives, in the notation of the proof of Theorem~\ref{AkB}, $|A_1+kC_1|\ge{d+k \choose k}|A_1|$. Therefore \eqref{AkBinequa} and \eqref{AkB1equa} yields \eqref{AkBinside}.
\end{proof}

\section{Proof of sufficiency in Theorem~\ref{AkBequalist}}
\label{secsufficiency}

Based on Theorem~\ref{AkBequa}, we show that 
the pairs $(A,B)$  in Theorem~\ref{AkBequalist} are $k$--critical for any $k\geq 1$. 
First we show that we can restrict $B$ to the vertices of $[B]$ in the case of the pairs
$(A,B)$ listed  in Theorem~\ref{AkBequalist}.

\begin{lemma} 
\label{Bvertex}
If $(A,B)$ is any of the  pairs listed  in  Theorem~\ref{AkBequalist}, and $B'$ is the vertex set of $[B]$, then
$$
A+kB=A+kB'\mbox{ \ for any $k\geq 1$}.
$$
\end{lemma}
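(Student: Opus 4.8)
The plan is to go through the list in Theorem~\ref{AkBequalist} and, in each case, verify directly that adding the non-vertex points of $B$ to the $k$-fold sum changes nothing. Since in all cases $B$ is contained in the union of the edges of $[B]$, the non-vertex points of $B$ lie in the relative interiors of certain (``loaded'') edges, and by the Nill--Padrol classification (Theorem~\ref{theo:stackablechar}) the loaded edges are of a very restricted form: either they all meet at one vertex of a simplex (cases (i), (ii)), or they are the ``vertical'' edges of a prism over a simplex, possibly after iterated pyramid (cases (iii), (vi)), or $[B]$ is a polygon with loaded sides (cases (iv), (v)). So really only a handful of geometric configurations need to be checked.

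The key observation in each case is that, because $A$ is stable with respect to $B$ (or, in cases (i), (ii), because $A$ contains an arithmetic progression $D$ along the loaded edge and translates of $D\setminus\{v_0\}$), for every loaded edge $e=[u,v]$ and every $b\in B\cap e$ that is not a vertex, we can write $b$ as an affine (indeed convex) combination $\lambda u + (1-\lambda)v$ with $\lambda$ a rational whose denominator divides the number of lattice steps along $e$; and then $a+b$ already lies in $A+k\{u,v\}\subset A+kB'$ for every $a\in A$. Concretely, I would argue $A+kB = A + k[B\cap e_1] + \cdots$ summand by summand: for a loaded edge $e$ with endpoints $u,v$ and intermediate points, stability gives $A + k(B\cap e) = A + k\{u,v\}$ exactly as in the one-dimensional computation at the start of Proposition~\ref{dimone} (where $A+kB = A+k\{0,1\}$ when $A$ is stable). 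Iterating this over all loaded edges reduces $B$ to $B'$.

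More precisely, I would prove the identity $A + k(B\cap e) = A + k\{u,v\}$ for one loaded edge at a time: the inclusion $\supseteq$ is trivial, and for $\subseteq$ one uses that a point $a + i\,b + (\text{rest})$ with $b$ an interior lattice point of $e$ can have the contribution of $b$ rewritten using $\{u,v\}$ thanks to the arithmetic-progression structure of $A$ along the lines parallel to $e$ — this is exactly the statement, recalled in the introduction, that for $A$ $B$-stable and a line $\ell$ carrying an edge of $[B]$, $A\cap\ell$ is a union of arithmetic progressions with the lattice step of $\ell$, one of which contains both endpoints of the edge. For cases (i) and (ii), where $A$ is only required to be a union of translates of an arithmetic progression $D$ rather than genuinely $B$-stable, the same rewriting works because every relevant translate of $D\setminus\{v_0\}$, together with the copy $D$ itself, is closed under adding the lattice step along the loaded edge up to hitting the edge's endpoints.

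The main obstacle I expect is bookkeeping rather than mathematical depth: one must be careful that when $B$ has several loaded edges (the prism case, and the polygon cases) the reductions for different edges do not interfere — but they do not, because the Minkowski sum factors as $A + \sum_e k(B\cap e)$ over the loaded edges $e$ (each intermediate point of $B$ lies on exactly one edge), and each factor can be replaced by $k\{u_e,v_e\}$ independently. A secondary subtlety is the iterated-pyramid cases (vi) and case (iii) with $d>q$: there the apex points are vertices of $[B]$ and contribute nothing new, so the reduction is inherited verbatim from the $q$-dimensional prism/polygon factor via the join structure of triangulations described in Section~\ref{sectriangs}. I would organize the proof as: (1) reduce to handling one loaded edge; (2) prove $A+k(B\cap e)=A+k\{u,v\}$ using stability / the $D$-structure; (3) dispatch the iterated-pyramid cases by the join observation; (4) conclude.
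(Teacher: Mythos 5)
Your overall strategy (rewrite the contribution of each non-vertex point of $B$ using the arithmetic-progression structure of $A$) is the right one, but two of your intermediate claims are false as stated, and one of them conceals the only real point of the proof. First, $A+kB$ does not equal $A+\sum_e k(B\cap e)$: the $k$-fold sumset of a union is the union over all ways of distributing the $k$ summands among the edges, not a single Minkowski sum of edge contributions. This is harmless, because the correct reduction is simply to prove that for every $a\in A$ and $b\in B$ there exist $a'\in A$ and $b'\in B'$ with $a+b=a'+b'$; induction on $k$ then gives $A+kB\subset A+kB'$, and the reverse inclusion is trivial. Second, and more seriously, your key claim that $a+b\in A+k\{u,v\}$ for \emph{every} $a\in A$ and every non-vertex $b$ on the loaded edge $e=[u,v]$ is false. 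Take case (ii) with $d=2$: $A=B=\{v_0,v_1,v_2,m\}$ with $m=\frac12(v_0+v_2)$ and $e=[v_0,v_2]$, and let $a=v_1$, $b=m$. Then $v_1+m-v_0$ and $v_1+m-v_2$ both lie outside $A$, so $v_1+m\notin A+\{v_0,v_2\}$; the only way to realize $v_1+m\in A+B'$ is as $m+v_1$, i.e.\ with $b'=v_1$ a vertex \emph{not} on the edge carrying $b$. The mechanism you invoke --- that $A$ meets the line through $a$ parallel to $e$ in sufficiently long arithmetic progressions --- breaks precisely here, since that line meets $[B]$ only in the single point $v_1$; it also breaks when $a$ lies on a parallel edge shorter than $e$, where the rewriting must use the endpoints of $a$'s edge rather than $b$'s.

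The paper's (very short) proof repairs both issues at once. It reduces to the single-element statement above and then uses $B\subset A$ to assume $a,b\notin B'$: if $a\in B'$ and $b\notin B'$, swap their roles, which is legitimate because $b\in B\subset A$ and $a\in B'$. Once both are non-vertices, each lies on a translate of the arithmetic progression $D$ of the relevant loaded edge, and the identity $D+D=\{u,v\}+D$ performs the rewriting. So your step (2) must be replaced by: (2a) if $a$ is a vertex, swap $a$ and $b$; (2b) otherwise both points sit on translates of a common progression $D$ whose endpoints are vertices of $[B]$, and $D+D=\{u,v\}+D$ concludes. Without (2a) the argument has a genuine hole.
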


\begin{proof} It is sufficient to  prove that for any $a\in A$ and $b\in B$, there exist $a'\in A$ and $b'\in B'$ such that $a+b=a'+b'$. Since $B\subset A$, we may assume that $a,b\not\in B'$. The fact that $A$ is stable with respect to $B$ and the conditions in (ii)-(v) of Theorem \ref{AkBequalist} mean that $b$ belongs to an arithmetic progression $D$ along an edge $[u,v]$ of $[B]$  containing its two vertices $u,v$,  and $a$ belongs to some translate $x+D$ of this arithmetic progression. The result follows since 
$D+D=\{ u,v\}+D$. 
\end{proof}

\begin{proof}[Proof of necessity in Theorem~\ref{AkBequalist}]
If $[B]$ is a simplex then combining Lemma~\ref{Bvertex} and Corollary~\ref{coro:Bsimplex} yields equality in 
Theorem~\ref{AkB}. 

Therefore we assume that $[B]$ is an iterated pyramid over a $q$-dimensional prism $P$, 
$2\leq q\leq d$,
and referring to Lemma~\ref{Bvertex}, also that $B$ consists of  the vertices of $[B]$. Let $B_0= B\backslash(B\cap P)$. We write $v_1,\ldots,v_q,w_1,\ldots,w_q$ to denote the vertices of $P$ in a way such that the vectors $w_i-v_i$ are parallel pointing into the same direction for $i=1,\ldots,q$. We define $S_i=[\{v_1,\ldots,v_i, w_i\ldots w_q\}\cup B_0]$ for $i=1,\ldots,q$,
and hence $S_1,\ldots,S_q$ form a shelling of the corresponding triangulation of $B$.
We write $A_i$, $C_i$, $T_i$ to denote the corresponding sets defined in Theorem~\ref{AkBequa} for $i=1,\ldots,q$.

Let $k\geq 1$ and $i\in \{1,\ldots,q\}$. We claim that assuming $v_i=0$, we have
\begin{eqnarray}
\label{prismlatticeAB}
A+kB&=&(A_i+\Lambda(B))\cap(k+1) [B] \\
\label{prismlatticeAiCi}
A_i+kC_i&=&(A_i+\Lambda(B))\cap(k+1)T_i .
\end{eqnarray}

Before proving (\ref{prismlatticeAB}) and (\ref{prismlatticeAiCi}), we point out that
they readily yield Theorem~\ref{AkBequalist} (ii) for the shelling $S_1,\ldots,S_q$.
Since Theorem~\ref{AkBequalist} (i) holds by $B\subset A$, we deduce 
equality in Theorem~\ref{AkB} for the pair $(A,B)$.

To verify (\ref{prismlatticeAB}) and (\ref{prismlatticeAiCi}), we observe that each coset of $\Lambda(B)$  intersecting $A$  has a representative in $A_i$. In other words,
\begin{equation}
\label{ALambdaBAi}
A+\Lambda(B)=A_i+\Lambda(B).
\end{equation}

We distinguish two cases. If  $P$ is parallelogram, then $P$ is actually a fundamental parallelogram for the two-lattice
$\Lambda(B)\cap {\rm lin}P$. Since $A$ is stable with respect to $\Lambda(B)$,
we deduce (\ref{prismlatticeAB}) by (\ref{ALambdaBAi}). In addition 
(\ref{prismlatticeAiCi}) follows from (\ref{TiSi}), and the fact that the non-zero elements of $C_i$ form a $\Z$-basis of $\Lambda(B)$.

Therefore we assume that $P$ is not a parallelogram, and hence $A\cap P$ is contained
in the vertical edges $[v_j,w_j]$ of $P$, $j=1,\ldots,q$. Let 
$$
\{z_1,\ldots,z_{d-1}\}=B_0\cup(\{v_1,\ldots,v_q\}\backslash\{v_i\}),
$$
thus there exists $w\in\R^d$ pointing into the same direction as $w_i-v_i$ such that
$\{w,z_1,\ldots,z_{d-1}\}$ form a $\Z$-basis for $\Lambda(B)$. It follows that
there exist integers $m_1,\ldots,m_q\geq 1$ such that
$w_j-v_j=m_jw$ for $j=1,\ldots,q$, and there exists
$\Omega\subset[0,1)$ such that $v_j+tw\in A$ for $j\in\{1,\ldots,q\}$
and $t\in[0,m_j]$ if and only if 
$t-\lfloor t \rfloor\in\Omega$. We define the integers $n_1,\ldots,n_{d-1}$ by
$n_p=m_j$ if $z_p=m_j$, and $n_p=0$ if $z_p\in B_0$. Writing
$$
\Xi=\{(i_1,\ldots,i_{d-1})\in\Z^{d-1}:\,i_1+\ldots+i_{d-1}=k\mbox{ and } i_j\geq 0\},
$$
we deduce (\ref{prismlatticeAB}) from (\ref{ALambdaBAi}) and
\begin{eqnarray*}
A+kB&=&\bigcup_{(i_1,\ldots,i_{d-1})\in\Xi}\left\{tw+\sum_{j=1}^{d-1}i_jz_j:\,
t-\lfloor t \rfloor\in\Omega\mbox{ and }
0\leq t\leq \sum_{j=1}^{d-1}i_jn_j\right\}\\
&=&(A_i+\Lambda(B))\cap(k+1) [B].
\end{eqnarray*}

Turning to (\ref{prismlatticeAiCi}), we observe that $C_i\backslash\{v_i\}$ form a basis for $\R^d$. Combining this fact with (\ref{TiSi}) yields
$$
A_i+kC_i=(A_i+\Lambda(C_i))\cap (k+1)T_i.
$$
Since $\{w\}\cup(C_i\backslash\{v_i,w_i\})$ form a $\Z$-basis for $\Lambda(B)$,
and $A_i+w\subset A_i+\Lambda(C_i)$, we deduce that
$A_i+\Lambda(C_i)=A_i+\Lambda(B)$. We conclude (\ref{prismlatticeAiCi}), and in turn that  equality holds in Theorem~\ref{AkB} for the pairs $(A,B)$  in  Theorem~\ref{AkBequalist}.
\end{proof}

\section{Basic properties of $k$--critical pairs}
\label{secbasic-critical}

The goal of the section is to prove Theorem~\ref{equacond} listing some fundamental properties
of $k$--critical pairs. The first one is a direct consequence of Theorem~\ref{AkBequa}.

\begin{lemma}
\label{BinA}
If $(A,B)$ is  a $k$--critical pair, $k\ge 1$,  then $B\subset A$.
\end{lemma}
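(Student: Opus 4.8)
The plan is to use the characterization of $k$--criticality given by Theorem~\ref{AkBequa}, specifically condition (i): if $(A,B)$ is $k$--critical, then for \emph{some} shelling $S_1,\ldots,S_m$ of \emph{some} triangulation $\T$ of $B$, the first simplex $S_1$ has its vertex set $C_1$ contained in $A$. This immediately puts $d+1$ of the points of $B$ inside $A$, but I need all of $B$. The idea is that the choice of triangulation and shelling in Theorem~\ref{AkBequa} is free, so I should argue that every point of $B$ can be made to be a vertex of the initial simplex of some shelled triangulation.

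Concretely, fix an arbitrary point $b\in B$. I would like to produce a triangulation $\T$ of $B$ together with a shelling $S_1,\ldots,S_m$ whose first simplex $S_1$ has $b$ among its vertices; then Theorem~\ref{AkBequa}(i) gives $b\in C_1\subset A$, and since $b$ was arbitrary we conclude $B\subset A$. To build such a triangulation I would invoke the placing procedure discussed in Section~\ref{sectriangs}: choose an initial $d$-simplex $S=[x_1,\dots,x_{d+1}]$ with $\{x_1,\dots,x_{d+1}\}\subset B$, $b\in\{x_1,\dots,x_{d+1}\}$, and $S\cap(B\setminus\{x_1,\dots,x_{d+1}\})=\emptyset$, and then place the remaining points of $B$ one at a time. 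Such an initial simplex exists: since $[B]$ is $d$-dimensional, one can pick any full-dimensional simplex on vertices of $B$ that is ``empty'' of other points of $B$ — for instance, pull/plex-type constructions do this, and one can always arrange a prescribed vertex $b$ to be one of its vertices by, e.g., taking a vertex-adjacent empty simplex when $b$ is a vertex of $[B]$, or a small empty simplex having $b$ as a vertex in general. The placing procedure produces a shellable triangulation $\T$ of $B$ whose canonical shelling starts with $S_1=S$, so $C_1\ni b$.

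The main obstacle is the very last technical point: showing that for every $b\in B$ there is a $d$-simplex on vertices of $B$ containing $b$ as a vertex and containing no other point of $B$. This is a standard fact (it is exactly what allows the placing procedure to be started ``from scratch'' at any prescribed point), so I would state it as such, citing \cite[Section 4.3.1]{LRS10}: one may, for example, order the points of $B$ so that $b$ comes first, together with $d$ further points in ``general-position-like'' order, and let $S$ be the simplex spanned by the first $d+1$ points in a pulling triangulation refined to avoid the remaining points; alternatively just take $S$ to be any inclusion-minimal empty simplex among those having $b$ as a vertex. Once this is granted, the proof is a one-line application of Theorem~\ref{AkBequa}(i): for each $b\in B$ exhibit such a shelled triangulation, deduce $b\in C_1\subset A$, and conclude $B\subset A$.

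Thus the proof is short and structured as: (1) recall Theorem~\ref{AkBequa}(i); (2) for an arbitrary $b\in B$, construct via the placing procedure a shellable triangulation of $B$ whose first simplex in its shelling has $b$ as a vertex; (3) apply (i) to get $b\in A$; (4) since $b$ is arbitrary, $B\subset A$. No delicate calculation is needed — the only care required is the elementary combinatorial-geometry observation about empty simplices in step (2).
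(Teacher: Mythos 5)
Your proposal is correct and is essentially the paper's own argument: the paper likewise fixes $x\in B$, takes a placing (hence shellable) triangulation whose first simplex has $x$ as a vertex, and applies Theorem~\ref{AkBequa}(i) to get $x\in C_1\subset A$. The only point to be aware of is that this relies on the ``only if'' direction of Theorem~\ref{AkBequa} holding for an \emph{arbitrary} shellable triangulation and shelling (not just some fixed one), which you correctly note and which follows from the proof of Theorem~\ref{AkB}.
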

\begin{proof} For any $x\in B$, we consider a shellable triangulation $\T$ with $x\in C_1$ for the first simplex $S_1=[C_1]$ of $\T$ (this can be achieved, for example, via a placing triangulation).
 Theorem~\ref{AkBequa} yields  $C_1\subset A$, thus $x\in A$.
\end{proof}

Based on Theorem~\ref{AkBequa}, we  prove that criticality is preserved by taking subsets of $B$.

\begin{lemma}
\label{coro:subset-equa}
Let $A, B$ be $d$-dimensional point sets with $A \subset [B]$, and let $k\ge 1$. If $(A,B)$ is $k$--critical, then $(A\cap [B'],B')$ is also $k$--critical for every  $B'\subset B$.
\end{lemma}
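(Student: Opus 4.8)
The key idea is to build a suitable shelling of $B$ whose restriction gives a shelling of $B'$, and then invoke the equality characterization in Theorem~\ref{AkBequa} on both sides. First I would fix a triangulation $\T'$ of $B'$ together with a shelling $S_1',\dots,S_{m'}'$ of it (which exists, e.g., take any placing triangulation). Since $[B'] \cap (B\setminus B')$ need not be empty in general, I cannot directly extend $\T'$ by placing; instead I restrict attention to $B'' := B \cap [B']$. Note $A\cap[B'] = A\cap[B'']$ and $B'\subset B''\subset B$, and by the first bullet preceding Theorem~\ref{theo:stackablechar} (or just directly) the relevant statement for $B'$ will follow once we prove it for the pair $(A\cap[B''],B'')$ and then again for $(A\cap[B'],B')$ with $B''$ in the role of $B$; so it suffices to treat the two cases (a) $[B'] \cap (B\setminus B') = \emptyset$ and (b) $B'\subset B$ with $[B']=[B'']$. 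Actually the cleanest route is: reduce to the case where $B'$ is obtained from $B$ by deleting a single point $x$, and split that into the two sub-cases $x\notin[B\setminus\{x\}]$ (so $[B\setminus\{x\}]$ is strictly smaller, handled by placing) and $x\in[B\setminus\{x\}]$ (so $[B']=[B]$), then iterate.

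\textbf{Case $x\notin [B']$, $B'=B\setminus\{x\}$.} Here I take any shellable triangulation $\T'$ of $B'$ with shelling $S_1',\dots,S_{m'}'$ and extend it to a triangulation $\T$ of $B$ by placing $x$ in $\T'$. As recalled in Section~\ref{sectriangs}, $\T$ is shellable, and in fact $S_1',\dots,S_{m'}'$ followed by the new pyramids over the visible facets of $\T'$ is a shelling of $\T$. Apply Theorem~\ref{AkBequa} to $(A,B)$ with this shelling: condition (i) gives $C_1'\subset A$, i.e. $C_1'\subset A\cap[B']$, and condition (ii) restricted to $i=1,\dots,m'$ gives $A_i + kC_i = (A+kB)\cap(k+1)T_i$ for each $i\le m'$. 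The point is to check that for $i\le m'$ the sets $A_i$, $C_i$, $T_i$ computed for $\T$ coincide with those computed for $\T'$ (clear, since the first $m'$ simplices are the same and $S_1'\cup\dots\cup S_{i-1}'$ is the same set), and that $(A\cap[B'])\cap T_i = A\cap T_i = A_i$ since $T_i\subset[B']$. Finally I must replace $(A+kB)\cap(k+1)T_i$ by $((A\cap[B'])+kB')\cap(k+1)T_i$; the inclusion $\supseteq$ is immediate, and $\subseteq$ follows because $A_i+kC_i\subset (A\cap[B'])+kB'$ already, so both sides equal $A_i+kC_i$. This verifies the two conditions of Theorem~\ref{AkBequa} for $(A\cap[B'],B')$ and the shelling $S_1',\dots,S_{m'}'$, hence $(A\cap[B'],B')$ is $k$--critical.

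\textbf{Case $x\in[B']$, $B'=B\setminus\{x\}$, so $[B']=[B]$.} Now I pick a triangulation $\T'$ of $B'$, shell it, and then build a triangulation $\T$ of $B$ by refining: $x$ lies in some simplex $S_j'$ (or on a face), and I can subdivide the local picture so as to obtain a triangulation of $B$ whose simplices, apart from those replacing $S_j'$ and its neighbours, agree with $\T'$; more robustly, I take $\T'$ to be \emph{any} triangulation and $\T$ a triangulation of $B$ \emph{refining} $\T'$ in the sense that every simplex of $\T$ lies in a simplex of $\T'$ (such a common refinement exists, e.g. a pulling/placing triangulation using $x$ last). Then a shelling of $\T$ groups its simplices by which simplex of $\T'$ they subdivide, and the restricted data recovers a shelling of $\T'$. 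Running Theorem~\ref{AkBequa} for $(A,B)$ on $\T$ and then summing/grouping the equalities in (ii) over each block gives back, for $\T'$, exactly $A_i'+kC_i' = ((A\cap[B'])+kB')\cap(k+1)T_i'$, again because $A\cap[B'] = A$ (since $[B']=[B]$) and $B'\subset B$ with $A+kB' \subset A+kB$ forcing the relevant containments to be equalities. Condition (i) for $\T'$ follows from Lemma~\ref{BinA}: $B\subset A$ implies $C_1'\subset B'\subset A = A\cap[B']$.

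\textbf{Main obstacle.} The delicate point is the second case: making precise the claim that a shelling of a refinement $\T$ of $\T'$ restricts, block by block, to a shelling of $\T'$, and that the equality $A_i+kC_i = (A+kB)\cap(k+1)T_i$ for the pieces $T_i$ inside a fixed $T_j'$ of $\T'$ can be reassembled into $A_j'+kC_j' = (A+kB)\cap(k+1)T_j'$. One must argue that the union $\bigcup$ over pieces of $(k+1)T_i$ covers $(k+1)T_j'$ and that $A+kB$ meets it only in $\bigcup (A_i+kC_i)$, using $A+kB = A+kB'$ (which holds once we know $B\subset A$, so that $x = a + 0\in A+kB'$ and more generally every $a+kx$-type term is already produced using $B'$, because $x$ being in $[B']$ and $B\subset A$ means... ) — here I would lean on Lemma~\ref{Bvertex}-style reasoning or simply on the fact that for the \emph{critical} pair we are handed the strong structural equality $(A+kB)\cap(k+1)T_i = A_i+kC_i$ for the fine triangulation, which already lives inside $(A\cap[B'])+kB'$. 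Once this bookkeeping is set up, the conclusion is formal. If the refinement argument proves cumbersome, an alternative is to deduce the $x\in[B']$ case from monotonicity: $(A\cap[B'])+kB' \subseteq A+kB$ and $|A\cap[B']| = |A|$ combined with Theorem~\ref{AkB} applied to $(A\cap[B'],B')$ gives one inequality, while $k$--criticality of $(A,B)$ and the inclusion give the reverse, pinning equality; this sidesteps shellings entirely for that case. I would present the single-point reduction, handle $x\notin[B']$ via placing and Theorem~\ref{AkBequa} as above, and handle $x\in[B']$ by this short monotonicity argument.
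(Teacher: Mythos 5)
Your full-dimensional argument is essentially the paper's: reduce to $\widetilde B=B\cap[B']$, build a placing triangulation of $B$ whose shelling has a shelling of a triangulation of $\widetilde B$ as a prefix, and read off conditions (i)--(ii) of Theorem~\ref{AkBequa} for the sub-pair; and your monotonicity shortcut for the case $[B']=[B]$ (squeeze $|A+kB'|$ between the Matolcsi--Ruzsa lower bound and $|A+kB|$) is correct and cleaner than a refinement-of-triangulations argument. However, there is a genuine gap: you never handle the case $\dim[B']<d$, which is precisely the case the lemma is most often invoked for in this paper (restricting a critical pair to an edge, a $2$-face, a quadrilateral $[v_i,w_i,v_j,w_j]$, etc.), and it is where the paper spends most of its proof.

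Concretely, in your single-point-deletion scheme the step that drops dimension is of your type ``$x\notin[B\setminus\{x\}]$'' (e.g.\ $x$ is the apex of a pyramid over $[B\setminus\{x\}]$), but your argument for that case breaks down there: if $B'=B\setminus\{x\}$ spans only a hyperplane $H$, then a triangulation $\T'$ of $B'$ consists of $(d-1)$-simplices, ``placing $x$'' produces the join $\{[x,S_i']\}$, and $S_1',\dots,S_{m'}'$ is \emph{not} a prefix of a shelling of $\T$ -- the $S_i'$ are not even facets ($d$-simplices) of $\T$. Moreover, criticality of $(A\cap[B'],B')$ is a statement in ambient dimension $d-1$, with different binomial coefficients and with Theorem~\ref{AkBequa} applied to $(d-1)$-dimensional cells $T_i'$, so it cannot be obtained by merely truncating the list of conditions for $(A,B)$. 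What is needed (and what the paper does) is to pass to a full-dimensional $B^*=\{x_1,\dots,x_{d-q}\}\cup\widetilde B$ with $\operatorname{aff}(B')$ a supporting $q$-plane $L$ of $[B^*]$, take the join shelling $S_i=[x_1,\dots,x_{d-q},\widetilde S_i]$, and then intersect the equalities $A_i+kC_i=(A^*+kB^*)\cap(k+1)T_i$ with $L$, using that $L$ supports $[B^*]$ so that $(\widetilde A+k\widetilde B)\cap(k+1)\widetilde T_i=L\cap\bigl((A^*+kB^*)\cap(k+1)T_i\bigr)$. Without this restriction-to-a-supporting-flat step your proof only covers subsets $B'$ with $\dim[B']=d$.
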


\begin{proof} Let $\widetilde{B}=B\cap[B']$ and $\widetilde{A}=A\cap[B']$. Since $B'\subset \widetilde{B}$ and 
$[B']= [\widetilde{B}]$,  it is sufficient to prove that $(\widetilde{A},\widetilde{B})$  is $k$--critical.

 If ${\rm dim}\,[B']=d$, then constructing a placing triangulation first for $\widetilde{B}$, we obtain some shelling $S_1,\ldots,S_m$ of a triangulation of $B$ such that the union of $S_1,\ldots,S_n$ is $[B']$ for some $n\leq m$. Now Theorem~\ref{AkBequa} (i) and (ii) for the pair $(A,B)$ readily yield the analogous properties for the pair $(\widetilde{A},\widetilde{B})$. 

Next we assume that ${\rm dim}[B']=q<d$. We choose $x_1,\ldots,x_{d-q}\in B$ such that for 
$B^*=\{x_1,\ldots,x_{d-q}\}\cup \widetilde{B}$, we have ${\rm dim}[B^*]=d$ and $B\cap [B^*]=B^*$. Let $A^*=A\cap [B^*]$. We observe that
$L={\rm aff}\,B'$ is a supporting $q$-plane to $[B^*]$, with $\widetilde{A}=A^*\cap L$, $\widetilde{B}=B^*\cap L$ and 
$[\widetilde{B}]=[B^*]\cap L$. 

Let $\widetilde{S}_1,\ldots,\widetilde{S}_n$ be a shelling of some triangulation of $\widetilde{B}$, and let
$\widetilde{A}_i$, $\widetilde{C}_i$, $\widetilde{T}_i$ for  $i=1,\ldots,n$ be the 
corresponding sets for Theorem~\ref{AkBequa}. We need to prove that they satisfy Theorem~\ref{AkBequa}.
Since $B\subset A$ according to Lemma~\ref{BinA},  Theorem~\ref{AkBequa} (i) readily follows, and all we have to verify is Theorem~\ref{AkBequa} (ii).

To achieve that, we observe that
$S_1,\ldots,S_n$ is a shelling of a triangulation of $B^*$ where
  $S_i=[x_1,\ldots,x_{d-q},\widetilde{S}_i]$ for $i=1,\ldots,n$. Writing $A_i$, $C_i$ and $T_i$ to denote the corresponding sets in
Theorem~\ref{AkBequa}, we have $A_1=\widetilde{A}_1\cup \{x_1,\ldots,x_{d-q}\}$,
 $A_i=\widetilde{A}_i$ for $i=2,\ldots,n$, moreover $\widetilde{C}_i=C_i\cap L$ and $\widetilde{T}_i=T_i\cap L$ 
for $i=1,\ldots,n$. The pair $(A^*,B^*)$  is $k$--critical by the  argument above because $B^*\subset B$ with ${\rm dim}B^*=d$, and
hence $A_i$, $C_i$ and $T_i$ satisfy Theorem~\ref{AkBequa} (ii). It follows that the
same conclusion holds for $\widetilde{A}_i$, $\widetilde{C}_i$ and $\widetilde{T}_i$, as for $i=1,\ldots,n$, we have
$$
(\widetilde{A}+k\widetilde{B})\cap \widetilde{T}_i=L \cap \left((A^*+k B^*)\cap T_i \right)
\subset L\cap (A_i+kC_i)=\widetilde{A}_i+k\widetilde{C}_i,
$$
where the first and the last equality is a consequence of the fact that $L$ is a supporting $q$-plane to $[B^*]$.
\end{proof}

Under the assumption $B\subset A$, we use the same ideas of the proof of Theorem~\ref{AkB} to obtain the following refinement of Theorem \ref{theo:h-vector-intro}:

\begin{theo}
\label{theo:h-vector}
Let $A,B\subset \R^d$ be finite such that ${\rm dim}\,[B]=d$ and $B\subset A \subset[B]$, and let $\T$ be a shellable triangulation of $B$ with $h$-vector $(h_0,\dots,h_d)$. Then
\begin{equation}
\label{eq:AkB-hvector}
|A +kB| \ge
{d+k\choose k}|A| - k{d+k\choose k+1}  +
\sum_{j=2}^{\min(d,k+1)} h_j {d+k+1 - j \choose k+1 - j}.
\end{equation}
\end{theo}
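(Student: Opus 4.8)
The plan is to refine the proof of Theorem~\ref{AkB} that was just given, tracking more carefully what is lost when a simplex $S_i$ is glued to the previous ones along more than one facet. Recall that in that proof we used the decomposition $A+kB\supseteq\bigcup_i (A_i+kC_i)$ with the $(k+1)T_i$ pairwise disjoint, where $T_i=S_i\setminus(S_1\cup\dots\cup S_{i-1})$, and then bounded each $|A_i+kC_i|$ from below. The coarse bound used $|A_i+kC_i|\ge\binom{d+k}{k}|A_i|$ for $i\ge2$, coming from the fact that $A_i$ contains at most one vertex of $S_i$. But if the index of $S_i$ in the shelling is $r=r(i)$, then $S_i$ meets the earlier simplices in (at least) $r$ facets, so $A_i$ actually avoids the $r$ vertices of $S_i$ that are \emph{not} opposite those facets — more precisely $A_i\subset S_i\setminus(F_{j_1}\cup\dots\cup F_{j_r})$ for $r$ facets $F_{j_t}$, which by \eqref{relint} (with $t=k+1$) forces the sums $a+kC_i$ for $a\in A_i$ to live in a region where the corresponding barycentric coordinates are all positive.

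Concretely, first I would fix a shelling $S_1,\dots,S_m$ of $\T$ and, for each $i$, let $r_i$ be its index, so that $\sum_i [r_i=j]=h_j$ by definition of the $h$-vector. Then I would prove the key per-simplex estimate: for $i\ge2$ with index $r_i=r$,
\[
|A_i+kC_i|\ge \binom{d+k}{k}|A_i| + \binom{d+k+1-r}{k+1-r},
\]
(interpreting the binomial as $0$ when $r>k+1$), with the extra term accounting for the ``missing'' lattice points $a+x$ with $a\in C_i\cap A_i$ that would normally be double-counted but here cannot be, because the vertices of $S_i$ lying on the common facets are not in $A_i$. The cleanest way to see this is the computation already done in Corollary~\ref{coro:Bsimplex}: writing $C_i=\{v_0,\dots,v_d\}$ with $v_0,\dots,v_{r-1}$ the vertices opposite the $r$ ``old'' facets (so $A_i$ may contain only vertices among $v_r,\dots,v_d$, hence at most one, and in any case $A_i\cap\{v_0,\dots,v_{r-1}\}=\emptyset$), the set $(k+1)T_i\cap(A+kB)$ contains $\bigcup_{a\in A_i}(a+kC_i)$ \emph{plus} at least one full translate $v+kC_i$ for $v$ a vertex of $S_i$ in the previously-built ball, and the intersection pattern of these translates inside $(k+1)S_i$, analyzed exactly as in Claim~\ref{Matolcsi-Ruzsa-1} and the proof of Corollary~\ref{coro:Bsimplex}, yields the stated surplus $\binom{d+k+1-r}{k+1-r}$. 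Actually the most economical route may be to not introduce new translates at all, but rather to observe that for index-$r$ simplices one gains, inside $(k+1)T_i$, exactly the points counted by $(C_{r+1}+kC)\setminus(C_1+kC)$ in the notation of Corollary~\ref{coro:Bsimplex}, whose cardinality $\sum_{l=2}^{r+1}\binom{d+k+1-l}{k}=\binom{d+k+1}{k+1}-\binom{d+k+1-r}{k+1}$ can be rewritten via the hockey-stick identity to produce the claimed term after summation.

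Summing the per-simplex bound over $i$ and adding the contribution of $S_1$ (which is handled by Corollary~\ref{coro:Bsimplex} exactly as before, using $C_1\subset A$ since $B\subset A$), one gets
\[
|A+kB|\ge\binom{d+k}{k}|A|-k\binom{d+k}{k+1}+\sum_{i\ge2}\binom{d+k+1-r_i}{k+1-r_i}
=\binom{d+k}{k}|A|-k\binom{d+k}{k+1}+\sum_{j=2}^{\min(d,k+1)}h_j\binom{d+k+1-j}{k+1-j},
\]
using $\sum_i[r_i=j]=h_j$ together with $h_0=1$, $h_1=|B|-d-1$ (so the $j=0,1$ terms are either absorbed into the already-present terms or vanish), and the fact that $\binom{d+k+1-j}{k+1-j}=0$ for $j>k+1$. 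The main obstacle, and the step I expect to require the most care, is the per-simplex inequality for general index $r$: one must be sure that (a) the $r$ facets along which $S_i$ is glued really do forbid the corresponding $r$ vertices from lying in $A_i$, so that $A_i$ touches $C_i$ in at most one (indeed zero, among the relevant) vertex, and (b) the ``new'' lattice points guaranteed to lie in $(k+1)T_i$ are counted without overlap with $\bigcup_{a\in A_i}(a+kC_i)$ — this is where \eqref{relint} and the disjointness of unit-cube translates from Claim~\ref{Matolcsi-Ruzsa-1}(ii) must be combined delicately, and where the equivalent reformulation $|A+kB|\ge\binom{d+k}{k}|A\setminus B|+\sum_j h_j\binom{d+k+1-j}{k+1-j}$ noted after Theorem~\ref{theo:h-vector-intro} is a useful consistency check.
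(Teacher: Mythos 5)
Your overall strategy is exactly the paper's: shell $\T$, decompose $A+kB$ over the disjoint regions $(k+1)T_i$, and credit each simplex of shelling index $j$ with a surplus of $\binom{d+k+1-j}{k+1-j}$ points, so that summing against the $h$-vector produces the extra term. But your key per-simplex estimate is false as stated, in both relevant cases. If $S_i$ has index $r\ge 2$, then \emph{no} vertex of $S_i$ lies in $T_i$ (every vertex lies on at least one of the $r$ old facets), so $A_i$ contains no vertex of $C_i$ and $A_i+kC_i$ is a disjoint union of $|A_i|$ translates of $kC_i$; hence $|A_i+kC_i|=\binom{d+k}{k}|A_i|$ exactly, with no surplus. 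The surplus of $\binom{d+k+1-r}{k+1-r}$ points is the set $(k+1)C_i\cap(k+1)T_i$: it lies in $(A+kB)\cap(k+1)T_i$ because $C_i\subset B\subset A$, and it is disjoint from $A_i+kC_i$ by Claim~\ref{Matolcsi-Ruzsa-1}(ii), but it is \emph{not contained in} $A_i+kC_i$, so the inequality must be asserted for $|(A+kB)\cap(k+1)T_i|$, not for $|A_i+kC_i|$. Worse, for $r=1$ the estimate fails even in that corrected form: the unique vertex of $S_i$ opposite the glued facet \emph{does} lie in $T_i$, hence in $A_i$ (as $B\subset A$), and $(k+1)C_i\cap(k+1)T_i$ is precisely that vertex plus $kC_i$, already contained in $A_i+kC_i$. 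So there is no surplus for index-$1$ simplices, and your bound overshoots by $\binom{d+k}{k}$ for each of the $h_1$ of them.

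This error resurfaces in your final display: $\sum_{i\ge2}\binom{d+k+1-r_i}{k+1-r_i}=\sum_{j\ge1}h_j\binom{d+k+1-j}{k+1-j}$, and the $j=1$ term $h_1\binom{d+k}{k}=(|B|-d-1)\binom{d+k}{k}$ neither vanishes nor is ``absorbed''; you simply discard it, which happens to cancel exactly the overshoot from the index-$1$ simplices, so your final inequality is the correct one while neither intermediate step is justified. The clean repair (the paper's route) is to count uniformly for all $i$, including $i=1$: $|(A+kB)\cap(k+1)T_i|\ge\binom{d+k}{k}|A_i\setminus C_i|+|(k+1)C_i\cap(k+1)T_i|$, i.e.\ to bookkeep with $|A\setminus B|$ rather than $|A|$; summation gives $\binom{d+k}{k}|A\setminus B|+\sum_{j\ge0}h_j\binom{d+k+1-j}{k+1-j}$, and the $j=0,1$ terms combine with $-\binom{d+k}{k}|B|$ to reconstitute $-k\binom{d+k}{k+1}$ using $h_0=1$ and $h_1=|B|-d-1$. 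Finally, your ``most economical route'' counts the wrong set: the gain inside $(k+1)T_i$ is $(k+1)C_i\cap(k+1)T_i$, of cardinality $\binom{d+k+1-r}{k+1-r}$, not $(C_{r+1}+kC)\setminus(C_1+kC)$, whose cardinality is a different and much larger number; that alternative would not produce the claimed term.
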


\begin{proof}
Let $S_1,\dots,S_m$   be a shelling of $\T$. We keep the same notation for
$A_i$, $C_i$ and $T_i$
as in the statement of Theorem~\ref{AkBequa}. In particular, we have 
\[
|A+kB|= \sum_{i=1}^m |(A+kB)\cap (k+1)T_i|.
\]

Let $A_i'=A_i\setminus C_i$. In each $(k+1)T_i$, we have, by the same argument as in Corollary~\ref{coro:Bsimplex}, and taking into account that $C_i\subset B \subset A$,
\begin{equation}
\label{AkCinequa}
|(A+kB)\cap (k+1)T_i|\ge |A_i+ kC_i| = |A'_i+ kC_i| + |(k+1)C_i \cap (k+1) T_i|.
\end{equation}

The first summand is 
\[
|A'_i+ kC_i| = {d+k\choose k}|A'_i|,
\]
by  Claim~\ref{Matolcsi-Ruzsa-1} (i), and so
\[
\sum_{i=1}^m |A'_i+ kC_i| = {d+k\choose k}|A\setminus B|.
\]

For the second summand, let $s_i$ be the index of $S_i$ in the shelling, and hence \eqref{relint} and enumeration yield 
\[
|(k+1)C_i \cap (k+1) T_i| =
\begin{cases}
 {d+k+1 - s_i \choose k+1 - s_i} & \text{if } s_i\le k+1, \\
0 &  \text{otherwise}.\\
\end{cases}
\]
Put differently,
\[
\sum_{i=1}^m |(k+1)C_i \cap (k+1) T_i| = \sum_{j=0}^{\min(d,k+1)} h_j {d+k+1 - j \choose k+1 - j}.
\]

Since $(k+1)C_i $ and $A'_i+ kC_i$ are disjoint by Claim~\ref{Matolcsi-Ruzsa-1} (ii),
we obtain
\begin{align*}
|A+kB|
\ge& \sum_{i=1}^m|A'_i+ kC_i| + \sum_{i=1}^m |(k+1)C_i \cap (k+1) T_i|\\
 = &
{d+k\choose k}|A\setminus B| + \sum_{j=0}^{\min(d,k+1)} h_j {d+k+1 - j \choose k+1 - j} \\
=&
{d+k\choose k}|A| + {d+k+1\choose k+1} -(d+1){d+k\choose k} +
\sum_{j=2}^{\min(d,k+1)} h_j {d+k+1 - j \choose k+1 - j}\\
=&
{d+k\choose k}|A| - k{d+k\choose k+1}  +
\sum_{j=2}^{\min(d,k+1)} h_j {d+k+1 - j \choose k+1 - j},
\end{align*}
where, in the last step, we use $h_0=1$ and $h_1=|B|-d-1$.
\end{proof}

\begin{coro}
\label{stack}
If the pair $(A,B)$ is $k$--critical with ${\rm dim}[B]=d$, then $B$ is totally stackable. In particular, $B$ is contained in the union of the edges of $[B]$.
\end{coro}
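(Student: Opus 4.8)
The plan is to derive Corollary~\ref{stack} as an immediate consequence of Theorem~\ref{theo:h-vector} combined with Lemma~\ref{BinA} and Lemma~\ref{lemma:shellings}. First I would invoke Lemma~\ref{BinA} to record that $k$--criticality gives $B\subset A\subset [B]$, so Theorem~\ref{theo:h-vector} applies: for any shellable triangulation $\T$ of $B$ with $h$-vector $(h_0,\dots,h_d)$ we have
\[
|A+kB| \ge {d+k\choose k}|A| - k{d+k\choose k+1} + \sum_{j=2}^{\min(d,k+1)} h_j {d+k+1-j \choose k+1-j}.
\]
Since the pair is $k$--critical, the left side equals ${d+k\choose k}|A| - k{d+k\choose k+1}$ exactly, so the sum $\sum_{j=2}^{\min(d,k+1)} h_j {d+k+1-j\choose k+1-j}$ must vanish. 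As each binomial coefficient ${d+k+1-j\choose k+1-j}$ is strictly positive for $2\le j\le \min(d,k+1)$ and each $h_j\ge 0$, we conclude $h_j=0$ for all $j$ with $2\le j\le \min(d,k+1)$.

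The one subtlety to address is that this directly forces $h_j=0$ only for $j\le k+1$, whereas total stackability (via Lemma~\ref{stackedequi}(ii)) requires $h_j=0$ for \emph{all} $j\ge 2$. I would handle this by observing that the same argument applies with $k$ replaced by any larger integer $k'\ge d-1$: indeed, Lemma~\ref{coro:k=1-equa} (cited in the introduction as giving that $k$--criticality implies $(k-1)$--criticality) runs the wrong direction, so instead I would note that if $(A,B)$ is $k$--critical it suffices to pick \emph{some} shellable triangulation and apply Theorem~\ref{theo:h-vector} with the given $k$; when $k\ge d-1$ the range $2\le j\le\min(d,k+1)$ is all of $2\le j\le d$ and we are done. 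For small $k$, the cleanest route is to note that Theorem~\ref{theo:h-vector} with $k=1$ already gives, after expanding the one surviving term, a clean statement — but more robustly, one should simply observe that the conclusion $h_j=0$ for $2\le j\le\min(d,k+1)$ already forces the triangulation to have $|B|-d$ simplices \emph{whenever} $k+1\ge d$, and for the remaining case it is better to note that any shellable triangulation $\T$ of $B$ has $\sum_{j\ge 0}h_j = m$ and to use that $h_2=0$ alone (available as soon as $k\ge 1$) combined with the characterization of Theorem~\ref{theo:stackablechar} is not yet enough. The honest fix, and the one I expect the authors use, is that $h_2=\dots=h_{\min(d,k+1)}=0$ together with Lemma~\ref{stackedequi} being stated for \emph{all} $i\ge 2$ means we genuinely need $k\ge d-1$; so the real argument must be that \emph{every} shellable triangulation of $B$ has $h_j=0$ for $2\le j\le \min(d,k+1)$, and then one invokes the Nill--Padrol classification (Theorem~\ref{theo:stackablechar}) or a direct combinatorial argument to bootstrap to all $j$.

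Let me restate the plan more carefully, since the main obstacle is precisely this gap between $\min(d,k+1)$ and $d$. The cleanest self-contained proof: apply Theorem~\ref{theo:h-vector} to an arbitrary shellable triangulation $\T$ of $B$; $k$--criticality forces $h_j=0$ for $2\le j\le\min(d,k+1)$; in particular $h_2=0$, and by Lemma~\ref{lemma:shellings}(iii) together with the fact that $h$ is determined by the $f$-vector, $h_2=0$ already implies $f_1 = \binom{|B|}{2} - \binom{d}{2}(\text{correction})$ — this is getting messy, so the better move is: $h_2=0$ for every shellable triangulation, and since by Lemma~\ref{lemma:shellings}(i) placing triangulations (which are shellable) exist through any chosen initial simplex, this strong vanishing across all shellable triangulations is exactly property (ii) of Lemma~\ref{lemma:stackable} restricted to pairs of points, and the general statement follows by the same argument applied to subsets $B'\subset B$ of all sizes via Lemma~\ref{coro:subset-equa}. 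Concretely: for any $B'\subset B$ with $\dim[B']=q$, the pair $(A\cap[B'],B')$ is $k$--critical by Lemma~\ref{coro:subset-equa}, so $B'$ has all shellable triangulations with $h_j=0$ for $2\le j\le\min(q,k+1)$; taking $B'$ of dimension $q\le k+1$ and of size $q+2$ shows every $q+2$ points of $B$ lie in a face of dimension $\le q+1$, which for $q+1\le k+1$ — hmm, still capped. I expect the genuine main obstacle is exactly this: pushing $h_j=0$ from $j\le k+1$ to all $j\le d$. The resolution the paper almost certainly uses is that $h_2=0$ (valid for all $k\ge1$) for every triangulation of every subset $B'$, combined with the elementary fact (Lemma~\ref{lemma:stackable}, direction $(iii)\Rightarrow(i)$) that $[C]\subset\partial[B]$ for all $|C|\le d-1$ implies total stackability: $h_2=0$ for all triangulations of all $3$-point subsets $B'$ (carriers of dimension $\le 2$) gives that no three points of $B$ have a triangle through the interior of a $2$-face, and iterating the subset argument on faces yields property (iii) of Lemma~\ref{lemma:stackable}, hence (i). So the final write-up is: combine Theorem~\ref{theo:h-vector}, the equality hypothesis, positivity of $h_j$ and of the binomial coefficients to get $h_2=0$ for every shellable triangulation of $B$ and (via Lemma~\ref{coro:subset-equa}) of every subset; then apply Lemma~\ref{lemma:stackable} to conclude $B$ is totally stackable; the containment in the union of edges is then the first bulleted consequence of Lemma~\ref{lemma:stackable} noted after its proof.
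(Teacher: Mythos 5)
Your first step is exactly the paper's: Lemma~\ref{BinA} gives $B\subset A$, so Theorem~\ref{theo:h-vector} applies to an arbitrary shellable triangulation of $B$, and $k$--criticality together with the non-negativity of the $h_j$ and the positivity of the binomial coefficients forces $h_j=0$ for $2\le j\le\min(d,k+1)$; in particular $h_2=0$ for every shellable triangulation. You also correctly identify the real obstacle: this controls $h_j$ only up to $j=\min(d,k+1)$ (only $h_2$ when $k=1$), whereas stackedness needs $h_j=0$ for all $j\ge 2$. But your resolution of that obstacle does not work, and the ingredient you are missing is Stanley's characterization of $h$-vectors of shellable (Cohen--Macaulay) complexes \cite{Sta77}: such an $h$-vector is an $M$-sequence, and for an $M$-sequence $h_{j+1}$ is bounded above by a quantity that vanishes when $h_j=0$, so $h_2=0$ alone already forces $h_j=0$ for all $j\ge 2$. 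This is precisely how the paper closes the gap; Lemma~\ref{lemma:stacked} then gives that every shellable (in particular every regular) triangulation is stacked, and a further cited fact (\cite[Theorem 8.5.19]{LRS10}) upgrades ``all regular triangulations have $|B|-d$ simplices'' to ``all triangulations do,'' i.e.\ to total stackability --- a passage from shellable to arbitrary triangulations that your write-up also leaves unaddressed.

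Your attempted workarounds do not substitute for this. Passing to a larger $k'$ is, as you note yourself, unavailable because criticality only propagates downward in $k$ (Lemma~\ref{coro:k=1-equa}). The bootstrapping via subsets is never actually carried out: the claim that ``$h_2=0$ for all triangulations of all $3$-point subsets'' detects triangles through the interior of a $2$-face is vacuous, since a $3$-point set spans at most a triangle whose unique triangulation has $h=(1,0,0)$ regardless; and the general assertion that $h_2=0$ for every shellable triangulation of every subset $B'\subset B$ yields property (iii) of Lemma~\ref{lemma:stackable} is stated but not proved --- the obstruction produced in the proof of that lemma is a cycle in the dual graph, which only guarantees $h_j>0$ for \emph{some} $j\ge 2$, not specifically $h_2>0$. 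So the proposal has a genuine gap at its central step.
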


\begin{proof}
We have $B\subset A$ according to Lemma~\ref{BinA}. Theorem~\ref{theo:h-vector} yields that every shellable (in particular, every regular) triangulation has $h_2=0$. According to the characterization of the $h$-vectors 
by R.P. Stanley \cite{Sta77} (see also Theorem~8.34 in G.M. Ziegler \cite{Zie95}),
we have  $h_j=0$ for $j\ge 2$, which, by Lemma \ref{lemma:stacked}, implies that $B$ is stacked. That is, every regular triangulation of $B$ has $|B|-d$ $d$-simplices. It is a fact (see~\cite[Theorem 8.5.19]{LRS10}) that then all the triangulations (regular or not) have the same number $|B|-d$ of $d$-simplices. That is, $B$ is totally stackable.
\end{proof}

Next we prove that equality in Theorem~\ref{AkB} is preserved under reducing the value of $k$.

\begin{lemma}
\label{coro:k=1-equa}
 If $(A,B)$ is $k$--critical for $k\ge 2$ with ${\rm dim}[B]=d$, then it is also $k'$--critical for every $k'=1,\ldots,k-1$. 
\end{lemma}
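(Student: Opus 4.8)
The plan is to leverage Theorem~\ref{AkBequa}, which characterizes $k$--criticality in terms of a shelling $S_1,\ldots,S_m$ of a triangulation $\T$ of $B$: the pair $(A,B)$ is $k$--critical if and only if $C_1\subset A$ and $A_i+kC_i = (A+kB)\cap(k+1)T_i$ for every $i$. The key observation is that the first condition, $C_1\subset A$, does not depend on $k$ at all, so it transfers immediately from the $k$--critical hypothesis to any $k'<k$. Hence the whole problem reduces to showing that the decomposition identity in part (ii) of Theorem~\ref{AkBequa} descends from $k$ to $k'$.

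First I would fix the shelling $S_1,\ldots,S_m$ witnessing $k$--criticality and work with the induced sets $A_i = A\cap T_i$, $C_i = \operatorname{vert}(S_i)$. For $i\ge 2$ the identity $A_i+k'C_i = (A+k'B)\cap(k+1)T_i$ is automatic: by \eqref{AkB1equa} (equivalently Claim~\ref{Matolcsi-Ruzsa-1}) the sets $a+k'C_i$ for $a\in A_i$ are pairwise disjoint since $A_i$ contains at most one vertex of $S_i$, and their union lies in $(k'+1)T_i$ by \eqref{relint}; so the only genuine content is the behavior of the first simplex $S_1$. There we need $(A+k'B)\cap(k'+1)S_1 = A_1 + k'C_1$, and since $C_1\subset A$ this is exactly the equality case of Corollary~\ref{coro:Bsimplex} with $k$ replaced by $k'$, which holds iff $C_1\subset A_1$ — but this in turn follows from $C_1\subset A$ together with the fact that $C_1\subset S_1 = T_1$, so $C_1\subset A\cap T_1 = A_1$. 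The more delicate point is verifying that no element of $A+k'B$ escapes from $\bigcup_i (k'+1)T_i$ into the wrong simplex, i.e.\ that the sum $A_1+k'C_1$ together with the $A_i+k'C_i$ actually exhausts $A+k'B$ and the union is disjoint; this is precisely what the chain \eqref{AkBinequa}--\eqref{AkB1equa} delivers once we know equality holds, and equality for $k'$ is what we are proving.

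The cleanest way to close the loop is to argue by a counting/monotonicity comparison rather than by re-deriving each identity. The inequality $|A+k'B|\ge \binom{d+k'}{k'}|A| - k'\binom{d+k'}{k'+1}$ always holds (Theorem~\ref{AkB}), with equality iff the decomposition of Theorem~\ref{AkBequa}(ii) holds. So I would show: if the decomposition holds for $k$ then it holds for $k'$. The point is that $(A+kB) \supseteq (A+k'B) + (k-k')\cdot(\text{any fixed point of }B)$, but more usefully, the failure of the decomposition at level $k'$ — some overlap among the sets $a+k'C_i$, or some point of $A+k'B$ outside all $(k'+1)T_i$ — would propagate to an overlap at level $k$ after adding $(k-k')$ copies of suitable vertices, contradicting $k$--criticality. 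Concretely, if $(A+k'B)\cap(k'+1)T_i \supsetneq A_i+k'C_i$ for some $i$, pick the extra point $p$; then $p + (k-k')w \in A+kB$ for an appropriate vertex $w$ of $S_i$, and one checks it still lies in $(k+1)T_i$ but not in $A_i+kC_i$, violating Theorem~\ref{AkBequa}(ii) for $k$. I expect the main obstacle to be exactly this last propagation argument: tracking that the "bad" point stays inside the same relatively-open region $(k+1)\big(T_i\big)$ after translating by $(k-k')w$ requires care with \eqref{relint} — the coordinate that was required to be positive must remain positive, which holds because $w$ can be chosen among the vertices of $S_i$ not shared with the earlier simplices, but this needs a short verification. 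Everything else is bookkeeping with the already-established machinery.
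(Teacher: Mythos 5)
Your overall strategy is the paper's: fix a shelling, note that condition (i) of Theorem~\ref{AkBequa} is $k$-independent, and descend condition (ii) from $k$ to $k'$ by adding $(k-k')$ copies of a distinguished vertex of $S_i$. Two things go wrong in the execution, though. First, the claim that for $i\ge 2$ the identity $A_i+k'C_i=(A+k'B)\cap(k'+1)T_i$ is ``automatic'' is false: disjointness of the sets $a+k'C_i$ and their containment in $(k'+1)T_i$ only give the inclusion $A_i+k'C_i\subseteq(A+k'B)\cap(k'+1)T_i$. The reverse inclusion --- that no sum $a+b_1+\dots+b_{k'}$ with the $b_j$ coming from \emph{other} simplices lands in $(k'+1)T_i$ outside $A_i+k'C_i$ --- is the entire content of criticality, for every $i$, not just $i=1$. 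You partly walk this back later, but the framing misplaces where the work is.

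Second, and this is the genuine gap: in your propagation step the assertion that the extra point $p+(k-k')w$ is ``not in $A_i+kC_i$'' is precisely the hard part, and you leave it unproven. The verification you flag as delicate --- that the translated point stays in $(k+1)T_i$ --- is actually immediate, since adding a positive multiple of the vertex $v_0$ of $S_i$ lying in $T_i$ (which exists and is unique because $B$ is totally stackable, so $T_i$ omits a single facet, and $v_0\in B\subset A$ gives $v_0\in A_i$) only increases the corresponding barycentric coordinate. What actually requires work is the extraction: $k$-criticality gives $p+(k-k')v_0=a+\sum_{j}m_jv_j$ with $a\in A_i$ and $\sum m_j=k$, and to conclude $p\in A_i+k'C_i$ (the desired contradiction with $p$ being ``bad'') you must show $m_0\ge k-k'$. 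This is where the paper spends its effort: writing $a=\sum\alpha_jv_j$ and $p=\sum\lambda_jv_j$, comparing coordinates to get $\lambda_0+(k-k')=\alpha_0+m_0$, and arguing $\alpha_0<1$ --- which fails exactly when all $\lambda_j$ are integers, so the case $p\in(k'+1)C_i$ must be split off and handled directly via $v_0\in A_i$. Without this coordinate analysis and case split, no contradiction with Theorem~\ref{AkBequa}(ii) for $k$ is actually obtained, so the proof as written does not close.
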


\begin{proof}
Let $S_1,\dots,S_m$ be a shelling of a triangulation of $B$. We use the notation of Theorem~\ref{AkBequa}.
Condition (i) of Theorem~\ref{AkBequa} is independent of $k$, and hence we need to check condition (ii).

Let $z\in (A+(k-1)B)\cap T_i$ for $i=1,\ldots,m$, and 
what we need to show is that  $z\in A_i+(k-1)C_i$. Since $B$ is totally stackable by 
Corollary~\ref{stack}, we may assume that $C_i=\{v_0,\ldots,v_d\}$ in a way such that
$T_i=S_i\backslash [v_1,\ldots,v_d]$ if $i\geq 2$, and
$z\not\in [v_1,\ldots,v_d]$ if $i=1$.  In particular, $v_0\in A_i$, and 
$$
z=\sum_{j=0}^d\lambda_jv_j \mbox{ \ where $\lambda_0>0$, $\lambda_j\geq 0$ for $j=1,\ldots,d$,
and  $\lambda_0+\ldots+\lambda_d=k$}
$$
by (\ref{relint}). If $z\in kC_i$, or equivalently each $\lambda_i$ is an integer, then $v_0\in A_i$ and
$\lambda_0>0$ yield that $z\in A_i+(k-1)C_i$. Therefore we assume that $z\not\in kC_i$.

Since $z+v_0\in (A+kB) \cap (k+1)\,T_i$ and $(A,B)$ is $k$--critical, 
Theorem~\ref{AkBequa} (ii) yields that 
\begin{equation}
\label{za}
z+v_0=a+\sum_{j=0}^dm_jv_j
\end{equation}
 where $a\in A_i$, every $m_j\geq 0$ is an integer.
and  $m_0+\ldots+m_d=k$.
We have 
$$
a=\sum_{j=0}^d\alpha_jv_j \mbox{ \ where $\alpha_0>0$, $\alpha_j\geq 0$ for $j=1,\ldots,d$,
and  $\alpha_0+\ldots+\alpha_d=1$,}
$$
and $\alpha_0>0$ is a consequence of \eqref{relint}.
As $C_i$ is affinely independent, the coefficients satisfy $\lambda_0+1=\alpha_0+m_0$ and
$\lambda_j=\alpha_j+m_j$ for $j=1,\ldots,d$.

Since $z\not\in kC_i$, we deduce that some $\alpha_j$ is not integer, which in turn implies that $\alpha_0<1$. 
We have $\alpha_0+m_0=\lambda_0+1>1$ based on  (\ref{za}), thus
$m_0\geq 1$ by $\alpha_0<1$. Therefore
$$
z=a+(m_0-1)v_0+\sum_{j=1}^dm_jv_j\in A_i+(k-1)C_i,
$$
as it is required by Theorem~\ref{AkBequa} (ii).
\end{proof}

We summarize Lemmas~\ref{BinA}, \ref{coro:subset-equa} and \ref{coro:k=1-equa} and Corollary~\ref{stack} as follows.

\begin{theo}
\label{equacond}
 If the pair $(A,B)$ is $k$--critical for $k\ge 2$  with ${\rm dim}\,[B]=d$, then
\begin{description}
\item{(i)} $B\subset A$;
\item{(ii)} $(A\cap [B'],B')$ is also $k$--critical for every  $B'\subset B$;
\item{(iii)} $B$ is totally stackable, thus $B$ is contained in the union of the edges of $[B]$;
\item{(iv)} $(A,B)$ is  $1$--critical, and hence $|A+B|= (d+1)|A|  - d(d+1)/2$.
\end{description}
\end{theo}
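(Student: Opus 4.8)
The plan is to observe that Theorem~\ref{equacond} is purely a repackaging of results already proved in this section, so the work consists only of citing them and doing one elementary binomial computation. Concretely: part (i) is exactly Lemma~\ref{BinA}; part (ii) is exactly Lemma~\ref{coro:subset-equa} (note that both apply under the sole hypothesis that $(A,B)$ is $k$--critical with ${\rm dim}\,[B]=d$, so no extra assumption on $k\ge 2$ is needed for these two); and the first half of part (iii), that $B$ is totally stackable, is exactly Corollary~\ref{stack}. The second half of (iii), that $B$ lies in the union of the edges of $[B]$, is the first bulleted consequence recorded right after Lemma~\ref{lemma:stackable} (or, directly, property (iii) of that lemma applied with $|C|\le 1$ together with the definition of totally stackable), and it is in fact already stated inside Corollary~\ref{stack} itself.

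For part (iv) I would first invoke Lemma~\ref{coro:k=1-equa} with $k'=1$: since $(A,B)$ is $k$--critical for some $k\ge 2$, it is also $1$--critical. It then remains to read off what $1$--criticality says numerically. By definition a $1$--critical pair is one for which Theorem~\ref{AkB} holds with equality at $k=1$, i.e.
\[
|A+B| = \binom{d+1}{1}|A| - 1\cdot\binom{d+1}{2}.
\]
Using $\binom{d+1}{1}=d+1$ and $\binom{d+1}{2}=\tfrac{d(d+1)}{2}$ this is precisely $|A+B| = (d+1)|A| - \tfrac{d(d+1)}{2}$, which is the asserted identity.

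There is essentially no obstacle here: every ingredient has been established, and the only genuine step is the trivial specialization of the binomial coefficients in the Matolcsi--Ruzsa bound to $k=1$. I would write the proof in three or four lines, one per item, each consisting of a pointer to the relevant lemma or corollary, plus the one-line binomial identity for (iv). If one wanted to be slightly more self-contained one could also remark that (ii) in particular, applied to nested subsets, will be the form used repeatedly in the later case analysis, but that is a comment rather than part of the proof.
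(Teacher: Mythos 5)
Your proposal is correct and matches the paper exactly: the paper itself presents Theorem~\ref{equacond} as a summary of Lemmas~\ref{BinA}, \ref{coro:subset-equa} and \ref{coro:k=1-equa} and Corollary~\ref{stack}, which are precisely the results you cite for (i)--(iv). The only addition you make, the trivial specialization of the binomial coefficients at $k=1$ for part (iv), is also correct.
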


From now on we consider $k$--critical pairs $(A,B)$ for $k=1$, which will be simply called {\it critical} pairs. Theorem~\ref{equacond} (iv) shows that $k$--critical pairs are critical. 
We also speak about critical sets in the case of the one dimensional version 
$|A+B|\geq 2|A|-1$ of the Matolcsi-Ruzsa inequality.

\section{The case of  a simplex}
\label{secsimplex}

In this section we consider the case where $[B]$ is   a $d$--simplex. First we discuss iterated pyramids, a case that will be used later on as well.

\begin{lemma}
\label{join}
Let $1\leq q< d$, and let $(A,B)$ be a critical pair with ${\rm dim}[B]=d$ such that
$[B]$ is an iterated pyramid over $[B_0]$ where $B_0\subset B$ and ${\rm dim}[B_0]=q$.
Then \begin{enumerate}
\item[(i)] $(A\cap [B_0],B_0)$  is a critical pair, and
\item[(ii)] $|(A\cap L)+B_0|=(q+1)|A\cap L|$ for any affine $q$-plane $L$ 
parallel to $L_0={\rm aff} (B_0)$  intersecting $A$ and avoiding the vertices of $[B]$.
\end{enumerate}
\end{lemma}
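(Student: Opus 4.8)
The plan is to exploit the structure of a join. Since $[B]$ is an iterated pyramid over $[B_0]$, write $B = B_0 \cup \{x_1,\ldots,x_{d-q}\}$ where $\{x_1,\ldots,x_{d-q}\}$ is affinely independent and $B_0\cup\{x_1,\ldots,x_{d-q}\}$ is a join. By Theorem~\ref{equacond}(i) we have $B\subset A$, so the apex points $x_1,\ldots,x_{d-q}$ all lie in $A$. The key fact about joins recalled in Section~\ref{sectriangs} is that every triangulation of a join $B_1\cup B_2$ is the "product" of triangulations of the factors: its $(d_1+d_2+1)$-simplices are exactly the joins $S'\cup S''$ of a $d_1$-simplex $S'$ of the first factor with a $d_2$-simplex $S''$ of the second. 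Here the second factor is the single simplex $[x_1,\ldots,x_{d-q}]$, so a triangulation $\T$ of $B$ is obtained from a triangulation $\T_0$ of $B_0$ by coning: $\T = \{\,[x_1,\ldots,x_{d-q}]\cup S_0 : S_0\in\T_0\,\}$, and a shelling $S^0_1,\ldots,S^0_m$ of $\T_0$ induces a shelling $S_1,\ldots,S_m$ of $\T$ with the same index sequence, where $S_i = [x_1,\ldots,x_{d-q},S^0_i]$.

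For part (i), I would run Theorem~\ref{AkBequa} on this induced shelling. With the notation there, $C_i = \{x_1,\ldots,x_{d-q}\}\cup C^0_i$ (the vertex set of $S^0_i$ together with the apices), $T_i = [x_1,\ldots,x_{d-q},S^0_i]\setminus(S_1\cup\cdots\cup S_{i-1})$, and $A_i = A\cap T_i$. Setting $L_0 = \operatorname{aff}(B_0)$, observe that $L_0$ is a supporting $q$-plane of $[B]$: indeed $[B]$ is a pyramid over $[B_0]$, so $[B]\cap L_0 = [B_0]$ and $B\cap L_0 = B_0$, $A\cap L_0 = A\cap[B_0] =: \widetilde A$. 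Then $C^0_i = C_i\cap L_0$, and $T^0_i := T_i\cap L_0$ is exactly $S^0_i\setminus(S^0_1\cup\cdots\cup S^0_{i-1})$, with $\widetilde A_i := \widetilde A \cap T^0_i = A_i\cap L_0$. The case $q=1$ of Theorem~\ref{AkBequa} applied to $(\widetilde A,B_0)$ requires: (i') $C^0_1\subset\widetilde A$, which follows from $C_1\subset A$ (Theorem~\ref{AkBequa}(i) for $(A,B)$) intersected with $L_0$; and (ii') $\widetilde A_i + C^0_i = (\widetilde A+B_0)\cap 2T^0_i$. For the latter, intersect condition (ii) of Theorem~\ref{AkBequa} for $(A,B)$ (at $k=1$, which holds by Theorem~\ref{equacond}(iv)) with the supporting plane $L_0$, exactly as in the last display of the proof of Lemma~\ref{coro:subset-equa}: the inclusion $(\widetilde A+B_0)\cap 2T^0_i \subset L_0\cap(A_i+C_i) = \widetilde A_i + C^0_i$ uses that $L_0\cap(a+b+\cdots)$ forces each summand to lie in $L_0$, hence in $B_0$; the reverse inclusion is immediate. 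This gives criticality of $(\widetilde A, B_0)$.

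For part (ii), let $L$ be an affine $q$-plane parallel to $L_0$, meeting $A$ but avoiding every vertex of $[B]$. Since $L$ avoids the apices $x_1,\ldots,x_{d-q}$ and is parallel to $L_0$, the set $(A\cap L)$ is contained in the relative interior of some face parallel to $[B_0]$ "inside" $[B]$; concretely $L\cap[B]$ is a translate-scaling of $[B_0]$ lying in the relative interior of $[B]$ in the apex directions, so there is a vector $x$ with $(A\cap L)+x$ contained in a translate of the relatively open simplex, and in particular $(A\cap L)\cap B = \emptyset$. One can then apply Lemma~\ref{lemAkBinside}-style reasoning within the simplex, but cleanly: pick the shelling above and note $A\cap L$ is partitioned by the $T^0_i$'s (translated), and for the bottom simplex $S_1$, Corollary~\ref{coro:Bsimplex} with $A\cap C_1$ empty on the $L$-slice gives $|(A\cap L \cap S_1)+C^0_1|\ge (q+1)|A\cap L\cap S_1|$, while for $i\ge 2$ Claim~\ref{Matolcsi-Ruzsa-1} gives equality $|(A\cap L\cap T_i^0)+C_i^0| = (q+1)|A\cap L\cap T_i^0|$ with all translates disjoint; summing and using criticality of $(A,B)$ (which forces equality throughout, in particular no collision can be "used up") yields $|(A\cap L)+B_0| = (q+1)|A\cap L|$. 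Here the main obstacle is bookkeeping the disjointness: one must check that the slices $(a + C^0_i)$ for $a$ ranging over $A\cap L$ in different $T^0_i$ stay disjoint and that the inequality from $S_1$ cannot be strict without violating $k$-criticality of $(A,B)$ — this is where Theorem~\ref{AkBequa}(ii) is invoked once more, guaranteeing $(A+B)\cap 2T_i = A_i + C_i$ exactly, which slices to the needed identity on $L$. Everything else is a routine translation of the simplex computations of Section~\ref{seccomb} into the sliced setting.
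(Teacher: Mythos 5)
Your plan for part (i) is essentially a re-derivation of Lemma~\ref{coro:subset-equa} specialized to $B'=B_0$; the paper simply cites that result (Theorem~\ref{equacond}(ii)), so this part is correct but redundant. For part (ii) you take a genuinely different route, and it can be made to work, but there is one genuine gap: you ``write $B=B_0\cup\{x_1,\dots,x_{d-q}\}$'' as if this were given. The lemma only assumes $B_0\subset B$ and that $[B]$ is an iterated pyramid over $[B_0]$; the set $B$ may contain further points on edges of $[B]$ incident to the apices. In that case the triangulations of $B$ are not cones over triangulations of $B_0$, the induced-shelling description you rely on fails, and your claim $(A\cap L)\cap B=\emptyset$ need not hold. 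The repair is exactly the paper's first move: set $\widetilde B=B_0\cup\{x_1,\dots,x_{d-q}\}$, note $[\widetilde B]=[B]$ so that $(A,\widetilde B)$ is critical by Lemma~\ref{coro:subset-equa}, and run the argument for $\widetilde B$ (the conclusion involves only $B_0$, so nothing is lost).

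With that fix your slicing argument for (ii) goes through, and it differs from the paper's. You slice the identity $(A+B)\cap 2T_i=A_i+C_i$ of Theorem~\ref{AkBequa}(ii) with the $q$-plane $L'$ containing $(A\cap L)+B_0$; the step that must be made explicit (you only gesture at it) is $(A_i+C_i)\cap L'=(A_i\cap L)+C_i^0$, which follows by projecting along the direction of $L_0$ and using the affine independence of the projected apices together with the hypothesis that $L$ avoids the vertices of $[B]$. This yields $(A\cap L)+B_0=\bigsqcup_i\bigl((A_i\cap L)+C_i^0\bigr)$, and each piece has exactly $(q+1)|A_i\cap L|$ elements by Claim~\ref{Matolcsi-Ruzsa-1}(ii), since $A_i\cap L$ contains no vertex of $S_i$; no appeal to ``criticality forcing equality'' is needed at that final stage, and your phrase to that effect conflates two strategies. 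The paper instead partitions $A$ into cosets of $H=\Z x_1+\cdots+\Z x_{d-q}+L_0$, uses criticality of $(A,\widetilde B)$ to force each class to meet its individual lower bound (Theorem~\ref{AkB} for the class $\widetilde A$, Lemma~\ref{lemAkBinside} for each class $A\cap L$), and then splits $|(A\cap L)+\widetilde B|=(d+1)|A\cap L|$ as $|(A\cap L)+B_0|+\sum_j|(A\cap L)+x_j|$. Both arguments are sound; the paper's is shorter and avoids the slicing bookkeeping, while yours gives a more explicit, simplex-by-simplex description of $(A\cap L)+B_0$.
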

\begin{proof} We have $B\subset A$ by Theorem~\ref{equacond} (i), and 
the pair $(A\cap [B_0],B_0)$  is  critical by Theorem~\ref{equacond} (ii).
Let $[B]=[x_1,\ldots,x_{d-q},B_0]$, and let
$\widetilde{B}$ be the join of $\{x_1,\ldots,x_{d-q}\}$ and $B_0$.
In particular, $(A,\widetilde{B})$ is a critical pair, by Lemma~\ref{coro:subset-equa}.

We may assume that $0\in B_0$. We divide $A$ into equivalence classes
according to the cosets of $H=\Z x_1+\ldots+\Z x_{d-q}+L_0$, and hence adding $\widetilde{B}\subset H$
to different equivalence classes results in disjoint sets. One equivalent class
is $\widetilde{A}=\{x_1,\ldots,x_{d-q}\}\cup (A\cap [B_0])$, and
\begin{equation}
\label{A'iterated}
|\widetilde{A}+\widetilde{B}|\geq (d+1)|\widetilde{A}|-d(d+1)/2
\end{equation}
by Theorem~\ref{AkB}.  Any other equivalence class is of the form
$A\cap L$ for an affine $q$-plane $L$ 
parallel to $L_0$ that  avoids $\widetilde{B}$ and intersects $A$. Since a translate of $A\cap L$ is contained in the relative interior of $[B_0]$, 
Lemma~\ref{lemAkBinside}  yields
 \begin{equation}
\label{ALiterated}
|(A\cap L)+\widetilde{B}|\geq (d+1)|A\cap L|.
\end{equation}
As $(A,\widetilde{B})$ is a critical pair, (\ref{A'iterated}) and (\ref{ALiterated}) imply
$$
(d+1)|A\cap L|=|(A\cap L)+\widetilde{B}|=|(A\cap L)+B_0|+\sum_{i=1}^{d-q}|(A\cap L)+x_i|,
$$
therefore $|(A\cap L)+B_0|=(q+1)|A\cap L|$.
\end{proof}

We recall that an edge of $[B]$ is loaded if it contains at least three points of $B$.

\begin{prop}
\label{simplex}
Let $A,B$ be finite $d$--dimensional sets in  $\R^d$, $d\geq 2$  with  $A\subset[B]$. If  $[B]$ is a simplex, and
$(A,B)$ is a critical pair, then $B\subset A$, $B$ is contained in the edges of $[B]$, and one of the following conditions hold:
\begin{description}
\item{(i)}   $|B|=d+1$; or

\item{(ii)} there is a unique  loaded edge $[u,v]$ of $B$; the points of $B$ in this edge are part of an arithmetic progression $D$ contained in $A$; and $A\backslash(B\cup D)$
is the disjoint union of translates $D\setminus \{v\}$; or

\item{(iii)}  there exist two or three loaded edges for $B$, which are sides of a two dimensional face $T$ of $[B]$, and $A$ consists of the vertices of $[B]$, and the midpoints of the sides of $T$.
\end{description}
\end{prop}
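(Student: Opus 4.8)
The strategy is to combine the combinatorial description of totally stackable simplices (Theorem~\ref{theo:stackablechar}(i), plus Theorem~\ref{theo:stackablechar}(ii) in the case where $[B]$ is a simplex with loaded edges all meeting a common $2$-face) with the fine information about critical pairs from Theorem~\ref{equacond} and Theorem~\ref{AkBequa}, and then an induction on the number of loaded edges using Lemma~\ref{coro:subset-equa}. By Theorem~\ref{equacond} we already know $B\subset A$, that $B$ lies on the edges of $[B]$, and that $(A,B)$ is $1$--critical; so throughout we may take $k=1$. Case (i), $|B|=d+1$, needs nothing. So assume at least one edge is loaded; since $B$ is totally stackable and $[B]$ is a simplex, by Theorem~\ref{theo:stackablechar} all loaded edges meet at a common vertex $v_0$, and in fact (because two edges from $v_0$ span a $2$-face) the union of loaded edges is contained in a single $2$-face only when there are at most two of them through $v_0$ spanning that face, or three forming a triangular face — this is exactly the dichotomy between (ii) (one loaded edge) and (iii) (two or three loaded edges spanning a $2$-face $T$). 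The first real task is to rule out the configuration of $\ge 2$ loaded edges all through $v_0$ but \emph{not} confined to a $2$-face, or $\ge 4$ loaded edges; and to rule out $3$ loaded edges not forming a face.

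First I would reduce to low dimensions via Lemma~\ref{join}: if $[B]$ has a vertex $x$ not on any loaded edge, then $[B]$ is a pyramid over the face $[B_0]$ opposite to $x$, and by Lemma~\ref{join}(i) $(A\cap[B_0],B_0)$ is again critical, while Lemma~\ref{join}(ii) forces $|(A\cap L)+B_0|=(q+1)|A\cap L|$ for all parallel slices $L$. Iterating, I may assume every vertex of the simplex lies on a loaded edge; combined with ``all loaded edges meet at $v_0$'', this means $d\le 2$ unless the only loaded edges are the $d$ edges at $v_0$ — but then the opposite facet's vertices lie on no loaded edge, contradiction unless $d\le 2$. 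Wait: more carefully, the loaded edges through $v_0$ cover at most $v_0$ plus one other vertex each, so if all $d+1$ vertices are covered we need all $d$ edges at $v_0$ loaded; then one reduces dimension by peeling off, or handles it directly. The cleanest route is: after the pyramid reduction we are left either with case (i), or with $[B]$ a triangle ($d=2$) carrying all the loaded structure inside a $2$-face, which is precisely case (iii) restricted to $d=2$, or with a single loaded edge. So the geometric/combinatorial part funnels everything into: (a) $d=1$, handled by Proposition~\ref{dimone}; (b) one loaded edge $[u,v]=[v_0,v_1]$ with all other vertices ``free''; (c) $d=2$ with two or three loaded edges.

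For (b), the single loaded edge case, I would apply Lemma~\ref{coro:subset-equa} with $B'=\{v_0,v_1\}$ to get that $(A\cap[v_0,v_1],\{v_0,v_1\})$ is $1$--critical on a segment; by Proposition~\ref{dimone}(i) this set $D:=A\cap[v_0,v_1]$ is stable with respect to $\{v_0,v_1\}$, i.e. an arithmetic progression containing both endpoints, with common difference $w=\gcd$ of the points of $B$ on that edge. The points of $B$ on the edge are a sub-AP, so they are part of $D\subset A$. It remains to show $A\setminus(B\cup D)$ is a disjoint union of translates of $D\setminus\{v_1\}$ (the ``half-open'' AP). For this I use the shelling-based characterization Theorem~\ref{AkBequa}: pick the placing triangulation whose first simplex is $[v_0,v_1,\dots,v_d]$ and whose shelling processes the extra points of the loaded edge one at a time; the ``stable'' structure of $D$ together with condition (ii) of Theorem~\ref{AkBequa}, applied slice-by-slice along lines parallel to $[v_0,v_1]$, forces each such slice $A\cap \ell$ to be a translate of $(x+\Z w)\cap[B]$, hence (since $\ell$ meets the interior) a union of full-length translates of $D$ minus one endpoint. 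Actually the precise bookkeeping: each coset of $\Z w$ meeting $A$ contributes, in $A+1\cdot B$, exactly what it would contribute using only $\{v_0,\dots,v_d\}$ as second summand, and matching cardinalities coset-by-coset (as in the proof of Proposition~\ref{dimone}, which tracks the torus $\R/\Z w$) yields the claim. I expect this arithmetic slicing argument, making Theorem~\ref{AkBequa}(ii) bite for \emph{every} simplex $T_i$ of the shelling simultaneously, to be the main obstacle — it is the step where the global equality $|A+B|=(d+1)|A|-\binom{d+1}{2}$ gets translated into a rigid per-coset structure.

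For (c), the planar case with loaded edges on a $2$-face: restricting by Lemma~\ref{coro:subset-equa} to each loaded edge shows each is an AP in $A$ with its own common difference $w_i$, containing both endpoints. If only one edge is loaded we are back in case (b). If two edges, say $[v_0,v_1]$ and $[v_0,v_2]$, are loaded, the slicing argument (lines parallel to one edge meeting the other) forces compatibility $w_1,w_2$; pushing this, together with $A\subset$ edges (no interior points of the triangle can be in $A$ here, since $B$ being on the edges and $A$ critical with $B\subset A$ and the $h$-vector/stackability constraint leaves no room — more precisely a slice parallel to $[v_1,v_2]$ through an interior point would violate Lemma~\ref{lemAkBinside} against equality), and the endpoint-containment, one finds the only possibility is the single subdivided configuration which iterating forces to be exactly ``vertices plus midpoints'', i.e. each loaded edge has exactly one extra point at its midpoint, and if a third edge $[v_1,v_2]$ is loaded it is likewise just its midpoint. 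The verification that ``more than one extra point per edge'' is impossible when $\ge 2$ edges are loaded, and that the three midpoints are forced to be genuine midpoints (same ratio on all sides), is a short but slightly fiddly computation with the equality case of $|A'+B'|\ge|A'|$ on the torus; I would do it by writing the second summand in barycentric coordinates and invoking Claim~\ref{Matolcsi-Ruzsa-1}(ii) to see that any two non-vertex points of $A$ in the triangle contribute disjointly to $A+B$, which combined with the exact count forces $|A|$ to be so small that only the midpoint configuration survives. Assembling the three cases gives the trichotomy (i)--(iii), with $B\subset A$ and $B$ on the edges already recorded, completing the proof. \qed
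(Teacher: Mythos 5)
Your skeleton has the right ingredients (total stackability, Lemma~\ref{join}, Proposition~\ref{dimone}), but the decisive steps are either missing or replaced by reductions that do not work. First, the pyramid-peeling reduction to $d\le 2$ is unsound: Lemma~\ref{coro:subset-equa} and Lemma~\ref{join}(i) only control $A\cap[B_0]$ for a face $[B_0]$, whereas cases (ii) and (iii) are assertions about \emph{all} of $A$ inside the $d$-simplex; that $A$ has no points outside $B'\cup T$ (resp.\ outside the translates of $D\setminus\{v\}$) is a conclusion, not something you may assume after restricting. The correct mechanism, for each loaded edge $[u,v]$, is to view the simplex as an iterated pyramid over $B_0=B\cap[u,v]$ and apply Lemma~\ref{join}(ii) to \emph{every} line $L$ parallel to $[u,v]$ that meets $A$ and avoids the vertices: this gives $|(A\cap L)+B_0|=2|A\cap L|$, and the equality case of Proposition~\ref{dimone}(ii) then says $A\cap L$ is stable with respect to $B_0$, hence a disjoint union of translates of $D\setminus\{v\}$. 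This single fact is the engine of the whole proof. You mention Lemma~\ref{join}(ii) in passing but then abandon it for an unexecuted shelling argument via Theorem~\ref{AkBequa}(ii), which you yourself flag as ``the main obstacle''; that is precisely the step that must be carried out, and it is not.

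Second, the heart of case (iii) — that two loaded edges force \emph{exactly one} extra point on each, located at the \emph{midpoint} — is not proved. The argument that does it is a fitting argument, not a torus computation: if $D_1,D_2$ are the progressions on $[v_0,v_1],[v_0,v_2]$ with $m_1,m_2\ge 3$ terms, the penultimate point $a_1=\frac{m_1-2}{m_1-1}v_1$ of $D_1$ must, by the stability fact above applied to $[v_0,v_2]$, lie on a translate of $D_2\setminus\{v_2\}$ inside $[B]$; that translate spans length $\frac{m_2-2}{m_2-1}\|v_2\|$ along the chord of $[B]$ through $a_1$ parallel to $v_2$, but that chord has length only $\frac{1}{m_1-1}\|v_2\|$. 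Hence $\frac{m_2-2}{m_2-1}\le\frac{1}{m_1-1}$, and with $m_i\ge3$ both sides equal $\frac12$, forcing $m_1=m_2=3$. Your appeal to Claim~\ref{Matolcsi-Ruzsa-1}(ii) and ``a short but slightly fiddly computation'' does not produce this inequality, and without it nothing excludes, say, two edges each subdivided into four parts. Likewise, the final step — that $A$ contains nothing besides the vertices of $[B]$ and the three midpoints of $T$ (which in particular rules out a third loaded edge outside $T$ and any points of $A$ elsewhere in the $d$-simplex) — requires applying the stability fact to both loaded edges in the plane $a+{\rm lin}\{v_1,v_2\}$ for an arbitrary $a\in A\setminus(D_1\cup B')$ and observing that the resulting configuration fits in $[B]\cap(a+{\rm lin}\{v_1,v_2\})$ only if that section is a full translate of $T$ and $a$ is a midpoint. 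This is absent from your sketch.
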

\begin{proof} The facts that
$B\subset A$ and $B$ is contained in the edges of $[B]$ follow from Theorem~\ref{equacond} (i) and (iii). Let $B'$ be the vertex set of $[B]$.

We may assume $|B|>d+1$, and hence there exists a loaded edge $[u,v]$ of $[B]$.

It follows from  Lemma~\ref{join} (i)  and  Proposition~\ref{dimone} (i)
that  $B\cap [u,v]$ is part of a maximal arithmetic progression $D$
containing the vertices $u,v$,  and $A\cap [u,v]$ contains $D$, and the rest of $A\cap [u,v]$
is the disjoint union of translates of $D\setminus \{v\}$. In particular,
Lemma~\ref{join} (ii)  and  Proposition~\ref{dimone} (ii) imply that
\begin{equation}
\label{uvD}
\mbox{ $A\backslash (D\cup B')$ is the disjoint union of translates of $D\setminus \{v\}$.}
\end{equation}

If  there exists a unique loaded edge of $B$, then \eqref{uvD} yields (ii).
Therefore we may assume that there are at least two loaded edges of $[B]$. Since $B$ is totally stackable by Theorem~\ref{equacond} (iii),  it follows that either all loaded edges meet in a vertex, or they form a triangular $2$--dimensional face by Theorem~\ref{theo:stackablechar}. 

Therefore we may assume $[v_0,v_1]$ and $[v_0,v_2]$ are two loaded edges of $B$ with $v_0=0$, and let  $T=[v_0,v_1,v_2]$ be the $2$-face containing these two edges.
In particular, $(A\cap T, B\cap T)$ is a critical pair
by Theorem~\ref{equacond} (ii). It follows by \eqref{uvD} that for $i=1,2$, $A\cap T$ contains an arithmetic progression $D_i$ of length  $m_i\geq 3$ with endpoints $v_0$ and $v_i$.
According to \eqref{uvD}, $a_1=\frac{m_1-2}{m_1-1}\,v_1$ is part of a translate of $D_2\backslash\{v_2\}$ contained in $A\cap T$,
and hence also of a segment  $\sigma\subset T$ of length at least 
$\frac{m_2-2}{m_2-1}\,\|v_2\|$.
Since $\frac{m_i-2}{m_i-1}\geq \frac12$,
we deduce that $m_1=m_2=3$, $D_1=\{v_0,a_1,v_1\}$ and $D_2=\{v_0,a_2,v_1\}$ for $a_2=\frac12\,v_2$. It follows by \eqref{uvD} that $a_0=\frac12(v_1+v_2)=a_1+a_2\in A$.

Let $a\in A\backslash (D_1\cup B')$, and let
$L=a+{\rm lin}\{v_1,v_2\}$.  It follows by \eqref{uvD} applied to the edge $[v_0,v_1]$ that $a\in\{p,p-a_1\}$ where $\{p,p-a_1\}\subset A$.
Now $p\not\in(D_2\cup B')$, and hence  applying \eqref{uvD}  to  the edge $[v_0,v_2]$, we conclude that
either $p+a_2\in A$, or $p-a_2\in A$. In other words, either
$[p,p-a_1,p+a_2]\subset [B]\cap L$, or $[p,p-a_1,p-a_2]\subset [B]\cap L$.
Since $[B]\cap L$ is a translate of $\lambda T$ for $\lambda\in(0,1]$ and 
$\{p,p-a_1\}\cap D_1=\emptyset$, 
we deduce that $\lambda=1$, and $\{p,p-a_1,p-a_2\}=\{a_0,a_2,a_1\}$.
Therefore $A$ consists of the vertices of $[B]$, and the midpoints of $T$.
\end{proof}

\section{Critical pairs $(A,B)$ with ${\rm dim}[B]=2$}
\label{secplanar}

In this section,  $A,B$ are finite sets in $\R^2$ satisfying that $A\subset[B]$ and $B$ spans $\R^2$.
Thus the case $k=1$ of Theorem~\ref{AkB} can be written into the form
\begin{equation}
\label{AkB2}
|A+B|\geq 3|A|-3.
\end{equation}
We note that the case when $[B]$ is a triangle is handled in Section~\ref{secsimplex}, and
hence we start with the case when $[B]$ is a  quadrilateral.

\begin{prop}
\label{quadri}
Let  $(A,B)$ be a critical pair with $[B]=[v_0,v_1,v_2,v_3]$  a quadrilateral. Then $A\subset \partial [B]$ and $[B]$ is a trapezoid. Moreover
\begin{enumerate}
\item[(i)] if $B$ consists of the vertices of a paralelogram, then $A$ can be partitioned into pairs of points, each a translate of a pair of consecutive points of $B$;
\item[(ii)] if $B$ has a loaded edge or is not a paralelogram, then $A$ is contained in two parallel edges of $B$, say $e_{1}=[v_0,v_1]$ and $e_{2}=[v_2,v_3]$, and each of $A\cap e_i$ can be partitioned into maximal arithmetic progressions with common difference $w$; in addition, if $\ell (e_2)\le \ell (e_1)$ and $[v_0,v_2]$ is an edge of $[B]$, then $(A\cap e_2)-(v_2-v_0)= A\cap (e_2-(v_2-v_0))$.
\end{enumerate}
\end{prop}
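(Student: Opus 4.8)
The plan is to exploit the three combinatorial configurations that a critical pair on a quadrilateral can take, using the machinery already developed. First I would record that $B\subset A$, $B$ lies on the edges of $[B]$, and $(A\cap[B'],B')$ is critical for every $B'\subset B$, all from Theorem~\ref{equacond}. Next I would argue that $A\subset\partial[B]$: since $[B]$ is a quadrilateral, the only loaded edges allowed by total stackability (Theorem~\ref{theo:stackablechar}) are the sides of $[B]$, so $B\subset\partial[B]$; to push $A$ itself to the boundary I would take any point $a\in A\cap\mathrm{int}[B]$, slice $[B]$ by the line $L=a+\mathrm{lin}(v_1-v_0)$ (or a generic direction), and obtain a critical sub-pair whose $A$-part has a translate in the interior of a triangle or a shorter segment, contradicting Lemma~\ref{lemAkBinside} combined with the equality characterization of Theorem~\ref{AkBequa}. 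The same slicing argument, applied to a diagonal triangulation of the quadrilateral, will force opposite behaviour on the two triangles and thereby force $[B]$ to be a trapezoid: if no pair of opposite sides is parallel, then every line parallel to a chosen direction meets the quadrilateral in a segment of strictly varying length, and a counting/stability argument along these parallel slices (using the one-dimensional Proposition~\ref{dimone}(ii) on the interior slices) breaks equality.

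For part (i), when $[B]$ is a parallelogram I would fix the shelling of the triangulation of $B$ by the diagonal $[v_0,v_2]$, so $S_1=[v_0,v_1,v_2]$, $S_2=[v_0,v_2,v_3]$, $T_2=S_2\setminus S_1$. Theorem~\ref{AkBequa}(ii) for $k=1$ says $A_i+C_i=(A+B)\cap 2T_i$; feeding in the fact that $\Lambda(B)$ has the vertices of $[B]$ (after translating $v_0$ to $0$) as a $\mathbb Z$-basis and that $[B]$ is a fundamental domain for $\Lambda(B)\cap\mathrm{lin}[B]$, I would deduce $A+\Lambda(B)=A_i+\Lambda(B)$ and then that $A+B=(A+\Lambda(B))\cap 2[B]$, which is exactly the statement that $A$ is $B$-stable. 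Stability of $A$ with respect to a parallelogram means, by the remarks in the introduction on stable sets, that $A\cap\ell$ on each line $\ell$ meeting $A$ is a union of maximal arithmetic progressions with the lattice step in that direction, and since $A\subset\partial[B]$ each such progression together with its partner on the opposite side is a translate of a pair of consecutive vertices of $B$; collecting these pairs partitions $A$ as claimed.

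For part (ii), when $B$ has a loaded edge or $[B]$ is a trapezoid but not a parallelogram, the loaded/short edges force $A$ onto the two parallel edges $e_1=[v_0,v_1]$, $e_2=[v_2,v_3]$: a point of $A$ on either of the two non-parallel ("leg") edges, or in the interior, would again produce an interior or too-short slice contradicting Lemma~\ref{lemAkBinside}. Restricting to $e_1$ (resp. $e_2$) and using Lemma~\ref{coro:subset-equa} together with Proposition~\ref{dimone}(i), each $A\cap e_i$ is $B$-stable in its edge, hence a union of maximal arithmetic progressions with common difference $w=\gcd$ of the relevant coordinates; that $w$ is the same for both edges follows because the edges are parallel and $\Lambda(B)$ controls the direction. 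For the final matching statement, assuming $\ell(e_2)\le\ell(e_1)$ and that $[v_0,v_2]$ is an edge, I would apply Theorem~\ref{AkBequa}(ii) to a shelling whose first simplex is $[v_0,v_1,v_2]$: equality forces $(A\cap e_2)+v_0$ and $(A\cap e_1)$ to agree wherever $e_2-(v_2-v_0)$ sits inside $e_1$, i.e. the translate of $A\cap e_2$ matching $v_2\mapsto v_0$ lies in $A\cap e_1$, which is the stated identity $(A\cap e_2)-(v_2-v_0)=A\cap(e_2-(v_2-v_0))$.

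The main obstacle I expect is the trapezoid dichotomy — proving that a critical quadrilateral must have a pair of parallel sides, and then cleanly separating the parallelogram case from the genuinely-trapezoidal one. The slicing arguments that show "$A$ avoids the interior" and "$A$ avoids the legs" are of the same flavour and manageable via Lemma~\ref{lemAkBinside} and the equality clause of Theorem~\ref{AkBequa}, but extracting parallelism requires simultaneously controlling the lengths of all the parallel slices and the way the one-dimensional stability propagates between them; this is where the interaction between the geometry of $[B]$ and the lattice $\Lambda(B)$ is most delicate, and where I would spend most of the care.
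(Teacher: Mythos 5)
Your outline reproduces the easy reductions (Theorem~\ref{equacond}, slicing into classes parallel to an edge, Proposition~\ref{dimone} on each edge), but it has two genuine gaps, both at the places you yourself flag as delicate. First, the trapezoid dichotomy is not actually proved. Your proposed mechanism --- slicing a parallel-side-free quadrilateral in a fixed direction and invoking Proposition~\ref{dimone}(ii) on the slices --- does not obviously produce a violation of $|A+B|=3|A|-3$, because for a quadrilateral with no parallel sides there is no direction in which the translates of a slice by the four vertices interact in a controllable one-dimensional way. The paper's argument is of a different, genuinely two-dimensional nature: after reducing to $B'=\{o,v_1,v_2,t_1v_1+t_2v_2\}$ with $0<t_1\le t_2<1$ and $t_1+t_2>1$ (using the observation that two \emph{big} sides, i.e.\ sides whose endpoint angles sum to less than $\pi$, must meet at a vertex), one notes that $A\setminus\{o,v_1,v_2\}$ lies in the fundamental cell $[0,1)v_1+[0,1)v_2$ of $\Z v_1+\Z v_2$, so $|A+\{o,v_1,v_2\}|=3|A|-3$ exactly; the extra point $2(t_1v_1+t_2v_2)$, whose coordinate sum exceeds $t_1+t_2+1$, then forces $|A+B'|>3|A|-3$. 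Without this (or an equivalent) idea the proposition's first assertion is unproved, and everything downstream depends on it.

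Second, your exclusion of interior points and of points on the non-parallel legs rests on Lemma~\ref{lemAkBinside}, which for $d=2$, $k=1$ only gives the non-strict bound $|(A\cap l)+B|\ge 3|A\cap l|$. Summing $3|A\cap l|$ over the interior slices together with $3|A_1\cup A_2|-3$ for the boundary class yields exactly $3|A|-3$, so no contradiction arises: you need the \emph{strict} inequality $|(A\cap l)+B|>3|A\cap l|$ for every interior line $l$. The paper obtains strictness from the specific geometry --- when $\ell(e_1)>\ell(e_2)$ the sets $A\cap l$ and $(A\cap l)+v_1$ are disjoint because $A\cap l$ has diameter less than $\|v_1\|$, and when $[B]$ is a parallelogram with a loaded edge one uses $|(A\cap l)+B_1|\ge|A\cap l|+2$. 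Note also that in the pure parallelogram case of part (i) no such strict bound is available (interior slices of size two can satisfy $|A_i+B|=3|A_i|$), which is why the paper there argues by cosets of $\Z^2$ and only rules out \emph{singleton} classes; your plan conflates these two regimes. The remaining ingredients of your part (ii) (stability on each edge via Proposition~\ref{dimone}, and the matching of $A\cap e_2$ into $A\cap e_1$ from the equality case of the cross terms) are consistent with the paper's proof once the above two points are repaired.
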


\begin{proof}  Let $B'=\{v_0,v_1,v_2,v_3\}$ be the vertices of $B$, where we may assume that $v_0=o$ and $[0,v_1], [0, v_2]$ are edges of $[B]$.  By Theorem~\ref{equacond}, we may assume that  $B\subset A$ and that  $(A,B')$ is critical.

Suppose that $[B]$ has no pair of parallel sides.  We say that a side of $[B]$ is big if the sum of the angles at the endpoints of the side is less than $\pi$. Since one side out of two opposite sides of  $[B]$ is big, there exists a vertex of  $[B]$ where two big sides meet. Therefore we may assume that $B=\{o,v_1,v_2,t_1v_1+t_2v_2\}$, where
$$
0<t_1\leq t_2<1\mbox{ \ and \ } t_1+t_2>1.
$$
In addition, if $s_1v_1+s_2v_2\in A$ then $s_1+s_2\leq t_1+t_2$.

Let $B_0'=\{o,v_1,v_2\}$. We observe that any coset of $\Z^2=\Z v_1+\Z v_2$ intersects
$[0,1) v_1+[0,1) v_2$ in exactly one point, therefore
no two points of $A\backslash B_0'$ are in the same coset.
 We deduce  that
$$
|A+B_0'|=|A\setminus B_0'|\cdot |B_0'|+|2B_0'|=3(|A|-3)+6=3\cdot|A|-3.
$$

On the other hand, if $s_1v_1+s_2v_2\in A+B_0$ then $s_1+s_2\leq t_1+t_2+1$.
Therefore $2(t_1+t_2)> t_1+t_2+1$ yields
$$
2(t_1v_1+t_2v_2)\in (A+B')\backslash (A+B_0'),
$$
and hence
$|A+B'|>|A+B_0'|=3\cdot|A|-3$, contradicting that $(A,B')$ is
a critical pair. This proves the first part of the statement. 

For (i), suppose that $B=B'$ and $[B]$ is a paralelogram. We may assume that
$[B]=[0,1]^2$. We partition $A=A_0\cup\cdots \cup A_t$ into equivalence classes
according to $\Z^2$, where $A_0=B$. We observe that, for $i>0$, each $A_i$ consists either of one single point or a pair which is a translate of a pair of consecutive vertices in $B$. We have
$$
|A+B|=\sum_{i=0}^k |A_i+B|=3|A_0|-3+\sum_{i=1}^k|A_i+B|=3|A|-3,
$$
which implies $|A_i+B|=3|A_i|$ for each $i=1,\ldots ,k$. This implies that no $A_i$  consists of a single point.

Finally, to prove (ii), we suppose that  $v_1=\lambda (v_3-v_2)$ with $\lambda \ge 1$ and that the  edge $e_1=[o,v_1]$ is loaded if $\lambda =1$. Set $A_i=A\cap e_i$ and $B_i=B\cap e_i$ for $i=1,2$.   Consider equivalence classes of $A$ determined by the cosets of the subgroup $H=\Z v_2+\R v_1$. One equivalence class of $A$ is $A_1\cup A_2$, and the rest are of the form $A\cap l$ for some line $l$ parallel to $[o,v_1]$, and intersecting ${\rm int}[B]$. For such a line $l$ we claim that 
\begin{equation}
\label{trapezoidinner}
|(A\cap l)+B|>3|A\cap l|.
\end{equation}
Indeed, if $\lambda> 1$ then 
$$
|(A\cap l)+B'|=|(A\cap l)\cup ((A\cap l)+v_1)\cup ((A\cap l)+\{v_2,v_3\})|>3|A\cap l|.
$$
If $\lambda =1$ then $|B_1|>2$ by our assumption, and we have $|(A\cap l)+B_1|\ge |A\cap l|+|B_1|-1= |A\cap l|+2$. Hence,
$$
|(A\cap l)+(B_1\cup B_2)|=|(A\cap l)+B_1|+|(A\cap l)+B_2|\ge( |A\cap l|+2)+(2|A\cap l|-1)>3|A\cap l|.
$$

Since $|(A_1\cup A_2)+(B_1\cup B_2)|\geq 3|(A_1\cup A_2)|-3$ by  Theorem~\ref{AkB}, it follows from   (\ref{trapezoidinner}) that $A=A_1\cup A_2$,
and in turn $B\subset A$ implies that $B=B_1\cup B_2$. This shows that both $A$ and $B$ are contained in two parallel lines of the trapezoid. Therefore,
$$
|A+B|=|A_1+B_1|+|A_2+B_2|+|(A_1+B_2)\cup (A_2+B_1)|,
$$
where, by Proposition~\ref{dimone} (i), $|A_i+B_i|\ge 2|A_i|-1$ with equality if and only if $A_i$ is stable with respect to $B_i$. Moreover we have also $|(A_1+B_2)\cup (A_2+B_1)| \ge |A_1|+|A_2|-1$.  Hence, it follows from $|A+B|=3|A|-3$  that  there is equality in the three inequalities above, which implies 
$$
A_2\subseteq A_1+v_2 \mbox{ and } (A_1+v_2)\cap [v_2,v_3]\subseteq A_2,
$$
which together with the other two equalities imply (ii).
 \end{proof}

\begin{lemma}
\label{polygon-trapezoid}
If $(A,B)$ is a critical pair, and $[B]$ is a polygon, then $[B]$
has at most four vertices.
\end{lemma}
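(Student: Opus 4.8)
The plan is to show that a critical pair whose convex hull is a polygon with at least five vertices leads to a contradiction by locating a point of $A+B$ that forces a strict inequality in \eqref{AkB2}. First I would invoke Theorem~\ref{equacond}: we may assume $B\subset A$, that $B$ is totally stackable (so $B$ lies on the edges of $[B]$), and that $(A,B')$ is critical, where $B'$ is the vertex set of $[B]=[v_0,v_1,\dots,v_{n-1}]$ with $n\ge 5$. By Lemma~\ref{coro:subset-equa} it suffices to derive a contradiction from the pair $(A',B')$, so I will work with $A'=A\cap\partial$-stuff as needed; in fact the cleanest route is to use only the vertices $B'$ and the criticality of $(A,B')$.

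The main idea is a counting argument analogous to the one in Proposition~\ref{quadri}. Fix a vertex, say $v_0$, at which the two incident edges $[v_0,v_1]$ and $[v_0,v_{n-1}]$ make an angle; put $v_0$ at the origin and note that since $n\ge 5$ there is enough ``room'' in $[B]$ that $2[B]=[B]+[B]$ strictly contains $[B]+[v_0,v_1,v_{n-1}]$. More precisely, I would consider the sub-simplex $B_0'=\{v_0,v_1,v_{n-1}\}$ (or a suitable triangle of three consecutive vertices) and estimate $|A+B_0'|$ using Claim~\ref{Matolcsi-Ruzsa-1} and Corollary~\ref{coro:Bsimplex}: cosets of $\Lambda(B_0')=\Z v_1+\Z v_{n-1}$ meeting $A$ outside $B_0'$ each contribute exactly $3$ points, so $|A+B_0'| = 3|A| - 3$. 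Then I would exhibit a point of $(A+B')\setminus(A+B_0')$. Because $[B]$ has a vertex $v_j$ that is ``far'' from the face spanned by $v_0,v_1,v_{n-1}$ — this is where $n\ge 5$ enters, guaranteeing such a vertex exists which is not expressible via the chosen triangle — the point $v_0+v_j$, or more robustly $2v_j$ for an extreme vertex $v_j$ of $[B]$ opposite to the chosen region, lies in $A+B'$ but not in $A+B_0'$. This gives $|A+B'|\ge |A+B_0'|+1 = 3|A|-2 > 3|A|-3$, contradicting criticality of $(A,B')$.

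The step I expect to be the main obstacle is making the ``extra point'' argument rigorous and uniform: one must choose the triangle $B_0'\subset B'$ and the witness point carefully so that (a) the coset-counting identity $|A+B_0'|=3|A|-3$ holds exactly — which needs that no two points of $A\setminus B_0'$ share a coset of $\Lambda(B_0')$, i.e.\ that the fundamental domain of $\Lambda(B_0')$ meets $[B]$ in a controlled way — and (b) the witness point genuinely escapes $A+B_0'$. For a general pentagon or hexagon the triangle $B_0'$ may not tile nicely, so I would instead argue via the equivalence-class decomposition of $A$ used in Proposition~\ref{quadri}: partition $A$ by cosets of a subgroup $H=\Z v_1 + \R v_{n-1}$ (a ``strip'' direction), show the non-extreme classes $A\cap l$ satisfy $|(A\cap l)+B'|>3|A\cap l|$ by Lemma~\ref{lemAkBinside} or direct inspection (since such a line meets $\mathrm{int}[B]$ and sees at least three vertices of $[B]$ on the same side), conclude $A$ lies on at most two parallel edges, and then observe that a polygon with $\ge 5$ vertices cannot have all its vertices on two parallel lines — contradiction. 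This ``at most two parallel lines'' reduction, already the engine of Proposition~\ref{quadri}(ii), is the cleanest finish and avoids the delicate lattice bookkeeping.
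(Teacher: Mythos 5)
Your plan has a genuine gap, and it sits exactly where you predicted: in making the coset/strip decomposition work for a polygon with five or more vertices. Both of your routes need a subgroup $H$ with $B'\subset H$ (either $\Lambda(B_0')=\Z v_1+\Z v_{n-1}$ in the first route, or $H=\Z v_1+\R v_{n-1}$ in the second) so that sumsets coming from distinct cosets of $H$ are disjoint and can be counted separately, together with the condition that $[B]$ sits inside one fundamental domain of $H$. For a trapezoid this holds because the two parallel edges differ by the single transverse generator and the fourth vertex is that generator plus a multiple of the edge direction; for a pentagon there is in general no such $H$: requiring all five vertices to lie on the two extreme lines of a lattice strip already caps the number of vertices at four (a line contains at most two extreme points of a convex polygon), so at least one vertex violates either $B'\subset H$ or the fundamental-domain condition, and the identity $|A+B'|=\sum_{L}|(A\cap L)+B'|$ over cosets breaks down. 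The same obstruction kills the first route: the count $|A+B_0'|=3|A|-3$ rests on Claim~\ref{Matolcsi-Ruzsa-1}(ii), which requires both summands to lie in the simplex $[B_0']$, and a pentagon is not contained in a fundamental parallelogram of $\Lambda(B_0')$ in general. A second, independent gap is the strict inequality $|(A\cap l)+B'|>3|A\cap l|$ for interior classes: Lemma~\ref{lemAkBinside} gives only ``$\geq$'', and in Proposition~\ref{quadri} strictness is extracted from the ad hoc observation that $A\cap l$ and $(A\cap l)+v_1$ are disjoint because the chord $l\cap[B]$ is shorter than the long parallel edge; you have not supplied a substitute for a pentagon.

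The paper's proof avoids all of this lattice bookkeeping by recycling the quadrilateral case on sub-configurations. Using Theorem~\ref{equacond} it reduces to a pentagon $P$ with $B$ equal to its vertex set, and notes that for each vertex $v$ the pair $(A\cap P_v, B\cap P_v)$, where $P_v$ is the hull of the remaining four vertices, is again critical; Proposition~\ref{quadri} then forces every $P_v$ to be a trapezoid with $A\cap P_v$ carried by its two parallel sides. Choosing the side $f$ of $P$ whose two angles sum to more than $\pi$, the opposite diagonal $e$ becomes the \emph{longer} parallel side of one trapezoid $P_v$, so the translation property in Proposition~\ref{quadri}(ii) plants a non-vertex point $x\in A$ in the relative interior of $e$; but $e$ is a diagonal of another sub-quadrilateral $P_w$, so $x\in A\cap{\rm int}\,P_w$, contradicting $A\cap{\rm int}\,P_w=\emptyset$ from Proposition~\ref{quadri}. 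The ingredient missing from your proposal is precisely this double use of Proposition~\ref{quadri} through Theorem~\ref{equacond}(ii), rather than a direct counting argument on the pentagon itself.
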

\begin{proof}
We suppose that $P=[B]$ is a polygon of at least five vertices, and seek a contradiction.
According to Theorem~\ref{equacond}, we may assume that $P$ is a pentagon, and $B$ consists of the vertices of $P$.
For any vertex $v$ of $P$, let $P_v$ be the convex hull of the other four vertices of $P$.
It follows again by  Theorem~\ref{equacond} that
 $(A\cap P_v,B\cap P_v)$ is a critical pair, as well, and hence Proposition~\ref{quadri} yields that $P_v$ is a trapezoid.

Since the sum of the angles of $P$ is $3\pi$, there exists a side $f$ of $P$ such that
the sum of the angles at the two endpoints is at least $\frac{6\pi}5>\pi$.
Let $e$ be the diagonal of $P$ not meeting $f$, and let $v$ be the vertex not in $e\cup f$.
It follows that $P_v$ is a trapezoid where $e$ and $f$ are parallel, and $\ell(e)>\ell(f)$.
We deduce from Proposition~\ref{quadri} that there exists $x\in A\cap e$ different from the endpoints of $e$.

Now let $w$ be an endpoint of $f$. Since $e$ is a diagonal of $P_w$, we have
$x\in A\cap {\rm int}P_w$. However
Proposition~\ref{quadri} (i) and (ii)  applied to the pair
$(A\cap P_w,B\cap P_w)$  shows that $A\cap {\rm int}P_w=\emptyset$,
which is a contradiction.
\end{proof}

\section{Critical pairs $(A,B)$  where $[B]$ is an iterated pyramid
over a prism over a simplex}
\label{secprism}

Our first statement is a preparation for the proof of Lemma~\ref{Anoinside}.

\begin{lemma}
\label{Ainside}
If  $A,B\subset \R^d$, $d\geq 2$, are finite such that
   $x+A\subset{\rm int}[B]$ for $x\in\R^d$, and
$[B]$ is  a $d$-dimensional prism over a simplex whose vertical sides are parallel, then
$$
|A+B|> (d+1)|A|.
$$
\end{lemma}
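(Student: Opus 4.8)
We want to show that if $[B]$ is a $d$-dimensional prism over a simplex with parallel vertical edges, and some translate $x+A$ lies in the interior of $[B]$, then $|A+B| > (d+1)|A|$. Let me think about the structure.

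A prism over a $(d-1)$-simplex has vertices $v_1,\dots,v_d$ (bottom) and $w_1,\dots,w_d$ (top), with $w_i - v_i = h$ a fixed vector for all $i$. So $B' = \{v_1,\dots,v_d,w_1,\dots,w_d\}$ are the vertices. We can take $B = B'$ since adding non-vertex points of $B$ only makes $A+B$ larger (as $A+B' \subseteq A+B$), so proving the strict inequality for $B'$ suffices.

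The key is a shelling/triangulation argument as in the proof of Theorem~\ref{AkB}, but we need to extract strictness. Triangulate the prism: $S_i = [v_1,\dots,v_i,w_i,\dots,w_d]$ for $i=1,\dots,d$ gives a shelling with $d$ simplices $S_1,\dots,S_d$, and $T_i = S_i \setminus (S_1\cup\dots\cup S_{i-1})$. Since $A$ (after translation) is in the interior, $A\cap B = \emptyset$, so by Corollary~\ref{coro:Bsimplex} applied with $k=1$, in each simplex $|A_i + C_i| \ge (d+1)|A_i|$ where $C_i$ is the vertex set of $S_i$. Summing via \eqref{AkBinequa} gives $|A+B| \ge (d+1)|A|$. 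To get strictness we must find one extra point: a point of $A+B$ not covered by $\bigcup_i(A_i+C_i)$, OR show one of the simplex-inequalities is strict, OR show the decomposition into the $(k+1)T_i$ pieces is not tight.

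The cleanest route to strictness: the sets $(k+1)T_i = 2T_i$ tile $2[B]$, but $A+B$ need not be contained in $\bigcup_i (A_i+C_i)$ — we only have $A_i + C_i = (A+B)\cap 2\,\overline{T_i}$-type containment on the nose when equality holds. So instead pick a point witnessing slack. Here is the idea I'd pursue. Since $A$ is in the interior and $B$ is the vertex set, consider the vector $h = w_i - v_i$. Take $a \in A$ maximizing the "height" coordinate (the coordinate in the $h$-direction), and take $a' \in A$ minimizing it. Then $a + w_j$ for the appropriate top vertex $w_j$ sits strictly above everything, and $a' + v_j$ sits strictly below; one shows $a + w_j$ (for a suitable $j$ making it a genuine vertex-like extreme point of $A+B$) cannot be written as any $a'' + b$ with $a'' \in A$, $b \in B'$ except in the one way — and separately that it is NOT of the form $a_i'' + (\text{vertex of } C_i)$ counted in the interior bound, because the interior-simplex count in Corollary~\ref{coro:Bsimplex} for $A\cap C = \emptyset$ loses exactly the "boundary" coincidences. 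Concretely: the proof of Theorem~\ref{AkB} gives $|A+B| \ge \sum |A_i + C_i|$; on the other hand the pieces $(A_i+C_i)$ need not be disjoint across different $i$, and $A+B$ contains the extreme point $a+w_j$ which corresponds to a vertex $(k+1)C_i$-contribution that, since $a \notin C_i$, is *not* present in the interior bound $(d+1)|A_i|$. Comparing with Lemma~\ref{lemAkBinside} and the slack $\binom{d+k}{k} - \binom{d+k+1-i}{k}$ from Corollary~\ref{coro:Bsimplex} at $|A\cap C_1| = 0$ versus an effective contribution from boundary points of $[B']$ still in play, I expect to exhibit at least one point of $(k+1)\partial[B]$ (for $k=1$: a point of $2[B]$ on its boundary, namely near $2w_j$ or $2v_j$ direction) that lies in $A+B$.

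The main obstacle is bookkeeping the "extra point" rigorously: one must verify that the candidate extreme point of $A+B$ genuinely falls outside the union of translated simplex-sums that realize the $(d+1)|A|$ lower bound, i.e. that no cancellation occurs. I would handle this by choosing coordinates so that $h$ is the last basis vector and the simplex $[v_1,\dots,v_d]$ is standard, writing every element of $A$ as $\sum \lambda_j v_j + t\, h$ with $\sum\lambda_j = 1$, $0 < t < 1$ (interior), and all $\lambda_j > 0$; then taking $a^* \in A$ with $t$ maximal and noting $a^* + w_d$ has height-coordinate exceeding $2 \cdot (\text{anything else} + \text{any }b)$ along the relevant facet, which forces it to be a new point while the interior bound already accounts for $(d+1)|A|$ points none of which reach that height. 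This strict gain of at least one proves $|A+B| > (d+1)|A|$.
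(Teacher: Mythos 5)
Your reduction to $B$ being the vertex set and the triangulation/shelling argument correctly recover $|A+B|\ge (d+1)|A|$ --- but that is exactly Lemma~\ref{lemAkBinside}, so the entire content of Lemma~\ref{Ainside} is the strict inequality, and this is precisely where your plan has a genuine gap. With the staircase shelling $S_i=[v_1,\dots,v_i,w_i,\dots,w_d]$, the lower bound $(d+1)|A|$ is realized by the disjoint union $\bigsqcup_i (A_i+C_i)$, so strictness requires a point of $A+B$ outside this union. Your candidate $a^*+w_d$ for $a^*\in A$ of maximal height fails: the top vertex $w_d$ lies in \emph{every} $C_i$, so $a^*+w_d\in A_{i_0}+C_{i_0}$ for the piece containing $a^*$ and is already counted. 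Replacing $w_d$ by another top vertex $w_j$ does not help either, since $a^*+w_j$ can coincide with $a'+w_{j'}$ where $a'=a^*+v_j-v_{j'}$ is another point of $A$ of the same maximal height sitting in the right piece $A_{i'}$ with $w_{j'}\in C_{i'}$; height-extremality alone does not isolate a genuinely new point, and you acknowledge this bookkeeping as the ``main obstacle'' without resolving it.

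The paper sidesteps the triangulation entirely. It slices $A$ into the fibers $A\cap l$ of lines $l$ parallel to the vertical direction (equivalence classes modulo $H=\Z v_1+\dots+\Z v_{d-1}+\R w_0$, after normalizing so that $[v_0,w_0]$ is a longest vertical edge). On the edge $[v_0,w_0]$, Proposition~\ref{dimone}(ii) gives $|(A\cap l)+\{v_0,w_0\}|\ge 2|A\cap l|$, while on each of the other $d-1$ vertical edges the trivial one-dimensional bound gives $|(A\cap l)+\{v_i,w_i\}|\ge |A\cap l|+1$; since these $d$ sums live on pairwise disjoint parallel lines, each fiber contributes at least $(d+1)|A\cap l|+(d-1)$, and the surplus $d-1\ge 1$ per fiber yields strictness. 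If you want to keep your approach, you would need to prove directly that $A+B$ cannot equal $\bigsqcup_i(A_i+C_i)$, which essentially forces you back to an edge-by-edge argument of the paper's type.
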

\begin{proof} Let $[v_0,\ldots,v_{d-1}]$ and $[w_0,\ldots,w_{d-1}]$ be the facets of $[B]$
such that $[v_i,w_i]$ are the parallel vertical edges for $i=0,\ldots,d-1$, and
\begin{equation}
\label{v0w0max}
\mbox{$\|v_i-w_i\|\leq \|v_0-w_0\|$ for $i=1,\ldots,d-1$}.
\end{equation}
We may assume that
$v_0$ is the origin, and $A\subset{\rm int}[B]$.  Let $\widetilde{B}=\{v_0,w_0,\ldots,v_{d-1},w_{d-1}\}$ be the vertex set of $[B]$, and let  $B'=\{v_1,w_1,\ldots,v_{d-1},w_{d-1}\}$.

We divide $A$ into equivalence classes according to the subgroup
$$
H=\Z v_1+\ldots+\Z v_{d-1}+\R w_0,
$$
and hence adding $\widetilde{B}\subset H$
to different equivalence classes results in disjoint sets.
As $A\subset{\rm int}[B]$ and (\ref{v0w0max}) yield that $A\subset(0,1)v_1+\ldots+(0,1)v_{d-1}+(0,1) w_0$,
any equivalence class is of the form $A\cap l$ for
a line $l$ parallel to $w_0$ and intersecting $A$, and a translate of $A\cap l$ is contained
in $[v_0,w_0]\backslash\{v_0,w_0\}$. Thus Proposition~\ref{dimone} (ii) implies
\begin{equation}
\label{lv0w0}
|(A\cap l)+ \{v_0,w_0\}|\geq
2|A\cap l|.
\end{equation}
We note that
\begin{equation}
\label{lvi}
\mbox{the sets
$l+v_i$, $i=0,\ldots,d-1$, are pairwise disjoint,}
\end{equation}
therefore
\begin{equation}
\label{lviwi}
|(A\cap l)+B'|=\sum_{i=1}^{d-1}|(A\cap l)+\{v_i,w_i\}|\geq
\sum_{i=1}^{d-1}(|A\cap l|+1)>
(d-1)|A\cap l|.
\end{equation}
We conclude $|A+B|\geq |A+\widetilde{B}|> (d+1)|A|$ by 
\eqref{lv0w0}, \eqref{lvi} and \eqref{lviwi}.
\end{proof}

Combining Lemmas~\ref{join} and \ref{Ainside} yield the following.

\begin{lemma}
\label{Anoinside}
If  $d>q\geq 2$ and $(A,B)$ is a critical pair such that  $B$ spans $\R^d$, $A\subset[B]$,
 and $[B]=[x_1,\ldots,x_{d-q}, P]$  for a $q$-dimensional prism over a simplex $P$ whose vertical edges are parallel, then
$A=\{x_1,\ldots,x_{d-q}\}\cup(A\cap P)$.
\end{lemma}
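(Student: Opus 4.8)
The statement says that for a critical pair $(A,B)$ where $[B]$ is an iterated pyramid with apexes $x_1,\dots,x_{d-q}$ over a $q$-dimensional prism over a simplex $P$ with parallel vertical edges, all of $A$ must lie in $[P]$ together with the apexes; that is, $A$ has no points in the ``interior strata'' of the pyramid. The natural strategy, which the paragraph preceding the lemma already advertises, is to combine Lemma~\ref{join} with Lemma~\ref{Ainside}. First I would invoke Lemma~\ref{join}(ii): since $[B]$ is an iterated pyramid over $[B_0]$ with $B_0 = B\cap P$ and $\dim[B_0] = q$, for every affine $q$-plane $L$ parallel to $L_0 = \operatorname{aff}(B_0)$ that meets $A$ and avoids the vertices of $[B]$ we have $|(A\cap L) + B_0| = (q+1)|A\cap L|$.

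\textbf{Main argument.} Now fix such a plane $L$, and suppose for contradiction that $A\cap L \neq \emptyset$ for some $L$ other than $L_0$ itself (equivalently, that $A$ is not contained in $[P]\cup\{x_1,\dots,x_{d-q}\}$; the points $x_i$ are the vertices of $[B]$ off the prism, and $B\subset A$ by Theorem~\ref{equacond}(i), so any point of $A$ outside $[P]$ and distinct from the $x_i$ lies on such an $L$ after noting that the fibers of the projection collapsing the prism direction are exactly these planes). Since $L$ avoids the vertices of $[B]$, the slice $[B]\cap L$ is a translate of a scaled copy $\mu P$ of the prism $P$ with $\mu\in(0,1)$ — here I use that $[B]$ is a pyramid over $P$, so every non-vertex parallel slice is a shrunken prism — and $A\cap L$ sits in its relative interior, i.e.\ a translate of $A\cap L$ lies in $\operatorname{int}[\mu P] = \operatorname{int}(\mu\cdot[P])$. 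Rescaling by $1/\mu$ (which does not change the combinatorial count $|X+Y|$ under the affine identification of $\mu P$ with $P$), I can apply Lemma~\ref{Ainside} to the set $A\cap L$ and the prism $P$: since a translate of $A\cap L$ lies strictly inside a prism over a simplex with parallel vertical edges (namely $\mu P$), Lemma~\ref{Ainside} gives
$$
|(A\cap L) + B_0| = |(A\cap L) + (B\cap P)| \ge |(A\cap L) + P'| > (q+1)|A\cap L|,
$$
where $P'$ is the vertex set of the prism. This contradicts the equality $|(A\cap L)+B_0| = (q+1)|A\cap L|$ from Lemma~\ref{join}(ii).

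\textbf{Conclusion and the delicate point.} Hence no such $L$ carries a point of $A$, so $A \subseteq [P] \cup \{x_1,\dots,x_{d-q}\}$, and since $B\subset A$ and $B\setminus(B\cap P) = \{x_1,\dots,x_{d-q}\}$ (as $[B]$ is an iterated pyramid over $P$, its only vertices off $P$ are the apexes, and by total stackability, Theorem~\ref{equacond}(iii), every point of $B$ is on an edge of $[B]$; the edges off $[P]$ through an apex contain no further points of $B$ because any loaded edge is a vertical edge of the prism by Theorem~\ref{theo:stackablechar}(iii)), we get $A = \{x_1,\dots,x_{d-q}\} \cup (A\cap P)$ exactly. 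The step I expect to require the most care is the geometric bookkeeping: correctly identifying that the relevant equivalence classes (fibers of the projection in the prism direction, or cosets of the subgroup used in Lemma~\ref{Ainside}) are precisely the planes $L$ parallel to $L_0$, that a non-vertex slice of the pyramid is genuinely a prism over a simplex with parallel vertical edges (so that Lemma~\ref{Ainside} applies after rescaling), and that the apexes $x_i$ are the only points of $A$ that can escape being captured by this slicing argument. Once this dictionary is set up, the contradiction between Lemma~\ref{join}(ii) (equality) and Lemma~\ref{Ainside} (strict inequality) closes the proof immediately.
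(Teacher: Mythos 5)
Your proposal is correct and follows exactly the route the paper intends: the paper's entire ``proof'' of this lemma is the one-line remark that it follows by combining Lemma~\ref{join} and Lemma~\ref{Ainside}, and your argument is precisely the missing combination --- slicing by $q$-planes parallel to $\operatorname{aff}(B\cap P)$, noting each non-trivial slice of the iterated pyramid is a shrunken copy of the prism so a translate of $A\cap L$ lands in $\operatorname{int}[P]$, and playing the strict inequality of Lemma~\ref{Ainside} against the equality of Lemma~\ref{join}(ii). The details you supply (fibers of the join projection, avoidance of vertices, and the apexes being the only escapees) are all correct.
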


It remains to describe the structure of $A$ and $B$ when $[B]$ is a prism over a simplex. 

\begin{prop}
\label{projectiveprism}
If  $d\geq 3$, and $(A,B)$ is a critical pair such that $[B]$ is  a $d$-dimensional prism over a simplex, then
$B\subset A$, and
\begin{description}
\item{(i)} the vertical edges of $[B]$ are parallel,
\item{(ii)} $A$ is contained in the vertical edges of $[B]$,
\item{(iii)} there exists a vertical vector $w\neq 0$ such that for each vertical edge $e$,
$A\cap e$ can be partitioned into maximal arithmetic progressions of difference  $w$ in $e$,
one of them  containing both endpoints of $e$, and this longest arithmetic progression
contains $B\cap e$. In addition if $e$ and $f$ are vertical edges, and $e+v\subset f$ in a way such that $e+v$ and $f$ share a common endpoint, then $(A\cap e)+v=A\cap(e+v)$.
\end{description}
\end{prop}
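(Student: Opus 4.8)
The plan is to reduce the $d$-dimensional prism-over-a-simplex case to the two-dimensional quadrilateral case already handled in Proposition~\ref{quadri}, by slicing $[B]$ with appropriate $2$-planes spanned by pairs of ``vertical'' edges. First, $B\subset A$ holds by Theorem~\ref{equacond}~(i), and since $B$ is totally stackable by Theorem~\ref{equacond}~(iii), every point of $B$ lies on an edge of $[B]$; by Theorem~\ref{theo:stackablechar} the loaded edges of $[B]$ must be vertical edges of the prism. So the only thing left is to pin down the combinatorial structure (i) and the arithmetic structure (ii), (iii). Write $[B]=[v_0,\dots,v_{d-1}]\times\text{(segment)}$ combinatorially, with vertical edges $e_i=[v_i,w_i]$, $i=0,\dots,d-1$, chosen so that $e_0$ is a longest vertical edge.

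For the combinatorial claim (i), I would pick two vertical edges $e_i,e_j$ and consider the quadrilateral face (or sub-polytope) $Q=[v_i,w_i,v_j,w_j]$. Since the two simplex-facets of the prism are parallel, the segments $[v_i,v_j]$ and $[w_i,w_j]$ are parallel, so $Q$ is already a trapezoid; but applying Theorem~\ref{equacond}~(ii) to $B'=\{v_i,w_i,v_j,w_j\}$ makes $(A\cap Q, B\cap Q)$ critical, and Proposition~\ref{quadri} then forces $Q$ to be a trapezoid whose parallel sides are a pair of \emph{opposite} sides. The point is that if $e_i$ and $e_j$ were not parallel, the trapezoid $Q$ would have its parallel sides being $[v_i,v_j]$ and $[w_i,w_j]$, and then Proposition~\ref{quadri}~(ii) applied with the \emph{non-parallel} pair $e_i,e_j$ would give a contradiction — specifically, it would force $A\cap Q$ into the two parallel sides $[v_i,v_j],[w_i,w_j]$, contradicting $v_i,w_i\in B\subset A$ lying on $e_i$ which is not one of those sides, unless $e_i$ is also parallel to them, i.e. $e_i\parallel e_j$. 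Doing this for all pairs $i,j$ gives (i).

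Once verticality is established, (ii) and (iii) follow by a fibration argument analogous to the proof of Lemma~\ref{Ainside} and Proposition~\ref{quadri}~(ii). Put $v_0$ at the origin and let $w$ be chosen so that the nonzero vector of the edge $e_0$ is an integer multiple of $w$; more precisely, decompose $A$ into equivalence classes modulo the subgroup $H=\Z v_1+\dots+\Z v_{d-1}+\R w_0$ generated by the ``horizontal'' directions and the vertical line. The class containing the vertices of $[B]$ is handled by Corollary~\ref{coro:Bsimplex}/Theorem~\ref{AkB}, and every other class is of the form $A\cap \ell$ for a vertical line $\ell$ meeting $\operatorname{int}[B]$; a translate of $A\cap\ell$ lies in $e_0\setminus\{v_0,w_0\}$, so Proposition~\ref{dimone}~(ii) gives $|(A\cap\ell)+\{v_0,w_0\}|\ge 2|A\cap\ell|$, and the $d-1$ translates $\ell+v_i$ are pairwise disjoint (as in \eqref{lvi}), yielding $|(A\cap\ell)+B'|\ge (d-1)|A\cap\ell|+(d-1)$. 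Summing, criticality forces equality everywhere: every ``interior'' vertical line contributes strictly more than $(d+1)|A\cap\ell|$ unless it is empty, so $A$ lies in the vertical edges, giving (ii); and on each vertical edge $e_i$ the equality case of Proposition~\ref{dimone}~(ii) (after rescaling $e_i$ to $[0,1]$) says $A\cap e_i$ is stable with respect to $B\cap e_i$, i.e. a union of maximal arithmetic progressions with a common difference. That the difference is the same $w$ for all $i$, and the matching-of-translates statement in (iii), comes from applying Proposition~\ref{quadri}~(ii) to each trapezoid $Q=[v_i,w_i,v_j,w_j]$ — its last assertion $(A\cap e_j)-(v_j-v_i)=A\cap(e_j-(v_j-v_i))$ is exactly the claimed relation between $A\cap e_i$ and $A\cap e_j$, which also forces the common difference to agree. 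The main obstacle I anticipate is the bookkeeping for (iii): showing the \emph{global} common difference $w$ (not merely a per-edge difference) and checking the ``$e+v\subset f$ sharing an endpoint'' translate condition in full generality — but all of it should reduce, one pair of vertical edges at a time, to the already-proved Proposition~\ref{quadri}.
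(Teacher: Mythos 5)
Your overall strategy (restrict to the quadrilaterals $[v_i,w_i,v_j,w_j]$ via Theorem~\ref{equacond}, invoke Proposition~\ref{quadri}, and use a fibration argument for the arithmetic structure) is the paper's strategy, and your treatment of (iii) matches the paper's. But your proof of (i) has a genuine gap. First, the premise that ``the two simplex-facets of the prism are parallel'' is false: a prism over a simplex is meant combinatorially (otherwise (i) would be vacuous), and a projective image of a standard prism has non-parallel facets and non-parallel vertical edges while keeping the quadrilateral $2$-faces planar. More seriously, the contradiction you extract from Proposition~\ref{quadri}(ii) does not exist. If $Q=[v_i,w_i,v_j,w_j]$ is a trapezoid whose parallel sides are $[v_i,v_j]$ and $[w_i,w_j]$, then Proposition~\ref{quadri}(ii) places $A\cap Q$ inside those two sides --- but the four vertices $v_i,w_i,v_j,w_j$ \emph{are} endpoints of those two sides, so $B\subset A$ is not violated unless some vertical edge happens to be loaded, which need not be the case (e.g.\ $B$ could be just the vertex set). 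In fact no argument that examines one pair of vertical edges at a time can prove (i): for $d=2$ a trapezoid with non-parallel ``vertical'' edges is itself a critical configuration by Proposition~\ref{quadri}(ii). The paper's proof of (i) is essentially a three-edge argument: assuming the vertical lines meet at a point $p$, it applies Lemma~\ref{Anoinside} to the pyramid $[v_0,[v_1,w_1,v_2,w_2]]$ to conclude $A\cap[v_0,v_1]=\{v_0,v_1\}$, while the trapezoid $[v_0,w_0,v_1,w_1]$, having $[w_0,w_1]$ parallel to and strictly shorter than $[v_0,v_1]$, forces via Proposition~\ref{quadri} a third point of $A$ on $[v_0,v_1]$ --- a contradiction. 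Some such use of $d\ge 3$ is indispensable.

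There is also a smaller soft spot in your argument for (ii): for a vertical line $\ell$ through a non-edge point of $A$, the fiber $\ell\cap[B]$ can be a segment of length equal to $\ell(e_0)$ (e.g.\ when all vertical edges have equal length), in which case $A\cap\ell$ need not translate into $e_0\setminus\{v_0,w_0\}$ and Proposition~\ref{dimone}(ii) does not apply directly; the crude bound $|(A\cap\ell)+\{v_0,w_0\}|\ge 2|A\cap\ell|-1$ is not enough to force $A\cap\ell=\emptyset$. The paper sidesteps this by proving (ii) through Carath\'eodory's theorem together with Lemma~\ref{Anoinside} (whose underlying Lemma~\ref{Ainside} assumes $A$ strictly interior precisely so that every fiber does translate into the open edge). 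Your sketch of (ii) is repairable along those lines, but as written it does not cover the boundary fibers.
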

\begin{proof} We have $B\subset A$ by Theorem~\ref{equacond} (i).
Let $[v_0,\ldots,v_{d-1}]$ and $[w_0,\ldots,w_{d-1}]$ be the facets of $[B]$
such that $[v_i,w_i]$ are the vertical edges for $i=0,\ldots,d-1$.

For $0\leq i<j\leq d-1$, it follows from Theorem~\ref{equacond} (iii)
that  $(A_{ij},B_{ij})$ is a critical pair
for $A_{ij}=[v_i,w_i,v_j,w_j]\cap A$ and $B_{ij}=[v_i,w_i,v_j,w_j]\cap B$, therefore
Proposition~\ref{quadri}  yields
\begin{eqnarray}
\label{vivjtrap}
[v_i,w_i,v_j,w_j]&&\mbox{is a trapezoid, and}\\
\label{vivjAB}
\mbox{$A_{ij}$ and $B_{ij}$}&&
\mbox{satisfy the conditions of Proposition~\ref{quadri} (i) or
(ii).}
\end{eqnarray}

We verify (i) using an indirect argument. We suppose that the lines
of the vertical edges meet at a point $p\in\R^d$, and seek a contradiction.
We may assume that $w_i\in[p,v_i]$ for $i=0,\ldots,d-1$.
Since the pair composed of
 $A\cap[v_0,v_1,w_1,v_2,w_2]$ and $B\cap[v_0,v_1,w_1,v_2,w_2]$
is critical by  Theorem~\ref{equacond} (iii), and $[v_1,w_1,v_2,w_2]$ is a trapezoid according to
(\ref{vivjtrap}) ,  Lemma~\ref{Anoinside} yields that
\begin{equation}
\label{emptyedge}
A\cap[v_0,v_1]=\{v_0,v_1\}.
\end{equation}
However $w_i\in[p,v_i]$ for $i=0,1$ and (\ref{vivjtrap}) yield that $[w_0,w_1]$ is parallel
with and shorter than $[v_0,v_1]$. We deduce from Proposition~\ref{quadri}
that $|A\cap[v_0,v_1]|\geq 3$, contradicting (\ref{emptyedge}), and implying (i).

We prove (ii) again by contradiction, therefore we suppose that
there exists an $x\in A$ not contained in the vertical edges. 

According to the Charateodory theorem (see e.g. B. Gr\"unbaum \cite{Gru03}), if $x\in[X]\backslash X$ for $X\subset\R^d$, then
$x\in[x_0,\ldots,x_d]$ for $x_0,\ldots,x_d\in X$. It follows that possibly after
reindexing, there exists $m$ and such that $1\leq m\leq d$,
$[x_0,\ldots,x_m]$ is an $m$-simplex, and
\begin{equation}
\label{Charateodory}
x\in{\rm relint}[x_0,\ldots,x_m]
=\{t_0x_0+\ldots+t_mx_m:\,\mbox{$t_0,\ldots,t_m>0$ and
$t_0+\ldots+t_m=1$}\}.
\end{equation}

We deduce from (\ref{Charateodory}) that there exists $1\leq m\leq d$,
and affinely independent vertices $x_0,\ldots,x_m$ of $[B]$ such that 
$x\in{\rm relint} [x_0,\ldots,x_m]$. Since an $m$-simplex has no two parallel edges, we may assume
that $x_0=v_0$, and $w_0\not\in \{x_1,\ldots,x_m\}$. In particular,
$x\in A\cap P$ for $P=[v_0,v_1,w_1,\ldots,v_{d-1},w_{d-1}]$ where
$Q=[v_1,w_1,\ldots,v_{d-1},w_{d-1}]$ is a  prism over a simplex  with parallel vertical edges
by (i). It follows from $x\in{\rm relint} [x_0,\ldots,x_m]$ that
$x\neq v_0$ and $x\not\in Q$.
Since the pair $(A\cap P, B\cap P)$ is critical, this
contradicts Lemma~\ref{Anoinside}, and hence implies  (ii).

The last property (iii) follows from (\ref{vivjAB}) and Proposition~\ref{quadri}.
\end{proof}

\section{Proof of necessity in Theorem~\ref{AkBequalist} }
\label{secproof}

Let $(A,B)$ be a $k$--critical pair for some $k\geq 1$ with ${\rm dim}[B]=d$.
In particular, $(A,B)$ a $1$--critical and $B$  is totally stackable by Theorem~\ref{equacond}. According to Theorem~\ref{splitchar}, $[B]$ is a simplex, or over an iterated pyramid over a
polygon or a prism over a simplex. If $[B]$ is a simplex, then the characterization in
Theorem~\ref{AkBequalist} (i), (ii) and (iii)  is achieved by Proposition~\ref{dimone} if $d=1$, and Proposition~\ref{simplex} if $d\geq 2$.

Therefore let $[B]$ be an iterated pyramid over $P$ with ${\rm dim}P=q$ where $P$ is
polygon or a prism over a simplex. We may assume that $P$ is not a triangle.
Since the pair $(A\cap P,B\cap P)$ is $1$--critical by Theorem~\ref{equacond} (ii),
Proposition~\ref{quadri} and \ref{polygon-trapezoid} yield that if $P$ is a polygon,
then it is a trapezoid. In addition, Proposition~\ref{projectiveprism}
yields that the vertical edges of $P$ are parallel even if $q\geq 3$.

We deduce from Lemma~\ref{Anoinside}
that any point of $A$ is a vertex of $[B]$, or contained in $P$.
Therefore we conclude Theorem~\ref{AkBequalist} (iv) and (v) from
Proposition~\ref{quadri}  if $q=2$, and from
 Proposition~\ref{projectiveprism} if $q\geq 3$.

\noindent {\bf Acknowledgement } Part of the research was done during an FP7 Marie Curie Fellowship of the first name author at BarcelonaTech, whose hospitality is gratefully acknowledged..

K. B\"or\"oczky :  Alfr\'ed R\'enyi Institute of Mathematics,
	 Hungarian Academy of Sciences,
	 1053 Budapest, Re\'altanoda u. 13-15.
	 HUNGARY. and

\noindent Central European University,
1051 Budapest, Nador u. 9, HUNGARY

F. Santos: Departamento de Matem\'aticas, Estad\'{\i}stica y Computaci\'on
Universidad de Cantabria, Av. de los Castros 48
E-39005 Santander, SPAIN
	 
O. Serra: Dept. Matem\`atica Aplicada 4, Univ. Polit\`ecnica de Catalunya, Jordi Girona 1, E-08034, Barcelona, SPAIN

\end{document}